\DeclareMathOperator{\PSL}{PSL}
\DeclareMathOperator{\PGL}{PGL}
\newcommand{\ZZ}{\ensuremath{\mathbb{Z}}}
\newcommand{\RR}{\ensuremath{\mathbb{R}}}
\newcommand{\QQ}{\ensuremath{\mathbb{Q}}}
\newcommand{\CC}{\ensuremath{\mathbb{C}}}
\newcommand{\cir}{\ensuremath{\mathcal{C}}}
\newcommand{\cpack}{\ensuremath{\mathcal{A}}}
\newcommand{\sm}[4]{\ensuremath{\left(\begin{smallmatrix} #1 & #2\\#3 & #4\end{smallmatrix}\right)}}
\newcommand{\smvec}[2]{\ensuremath{\left(\begin{smallmatrix} #1 \\#2 \end{smallmatrix}\right)}}
\newcommand{\lm}[4]{\ensuremath{\left(\begin{matrix} #1 & #2\\#3 & #4\end{matrix}\right)}}
\newcommand{\lmvec}[2]{\ensuremath{\left(\begin{matrix} #1 \\#2 \end{matrix}\right)}}
\newcommand{\dkron}[2]{\left(\dfrac{#1}{#2}\right)}
\newcommand{\kron}[2]{\left(\frac{#1}{#2}\right)}
\newcommand{\quartic}[2]{\left[ \frac{#1}{#2} \right]}
\newcommand{\tablestretch}[1]{
\begin{center}
    \renewcommand{\baselinestretch}{1.3}
		{\upshape
		#1
		}
    \renewcommand{\baselinestretch}{1}
\end{center}}
\newtheorem{theorem}{Theorem}[section]
\newtheorem{conjecture}[theorem]{Conjecture}
\newtheorem{corollary}[theorem]{Corollary}
\newtheorem{lemma}[theorem]{Lemma}
\newtheorem{proposition}[theorem]{Proposition}
\newtheorem{alemma}{Lemma}[section]
\theoremstyle{definition}
\newtheorem{definition}[theorem]{Definition}
\newtheorem{remark}[theorem]{Remark}
\newtheorem{aremark}[alemma]{Remark}
\newtheorem{adefinition}[alemma]{Definition}
\numberwithin{equation}{section}
\begin{document}
	
\title[Apollonian local-global is false]{The local-global conjecture for Apollonian circle packings is false}

\author{Summer Haag}
\address{University of Colorado Boulder, Boulder, Colorado, USA}
\email{summer.haag@colorado.edu}
\urladdr{https://math.colorado.edu/~suha3163/}

\author{Clyde Kertzer}
\address{University of Colorado Boulder, Boulder, Colorado, USA}
\email{clyde.kertzer@colorado.edu}
\urladdr{https://clyde-kertzer.github.io/}

\author{James Rickards}
\address{Saint Mary's University, Halifax, Nova Scotia, Canada}
\email{james.rickards@smu.ca}
\urladdr{https://jamesrickards-canada.github.io/}

\author{Katherine E. Stange}
\address{University of Colorado Boulder, Boulder, Colorado, USA}
\email{kstange@math.colorado.edu}
\urladdr{https://math.katestange.net/}

\date{\today}
\thanks{Stange is supported by NSF DMS-2401580. Rickards and Stange were supported by NSF-
CAREER CNS-1652238 (PI Katherine E. Stange).}
\subjclass[2020]{Primary: 
52C26, 
11D99, 
11-04; 
 Secondary:
20H10, 
11E12, 
11A15, 
11B99} 

\keywords{Apollonian circle packings, quadratic reciprocity, quartic reciprocity, local-global conjecture, thin groups}
\begin{abstract}
	In a primitive integral Apollonian circle packing, the curvatures that appear must fall into one of six or eight residue classes modulo 24. The local-global conjecture states that every sufficiently large integer in one of these residue classes will appear as a curvature in the packing. We prove that this conjecture is false for many packings, by proving that certain quadratic and quartic families are missed. The new obstructions are a property of the thin Apollonian group (and not its Zariski closure), and are a result of quadratic and quartic reciprocity, reminiscent of a Brauer-Manin obstruction. Based on computational evidence, we formulate a new conjecture.
\end{abstract}
\maketitle

\setcounter{tocdepth}{1}
\tableofcontents

\setcounter{section}{-1} 
\section{Authors' note}
The \href{https://doi.org/10.4007/annals.2024.200.2.6}{published version} of this article differs slightly from this version. A few proofs, minor results, tables, and figures were either shortened or removed for publication. In particular, a few labelled statements in this version are not present. Any such differences are noted in parentheticals, and generally labelled with capital letters (2.A) instead of numbers (2.6).

\section{Introduction}

Apollonian circle packings (Figure~\ref{fig:ACP}) have served as a quintessential example in the study of thin groups, alongside problems such as Zaremba's conjecture for continued fractions (see \cite{Kont13}). The central conjecture is that ``thin orbits'' in $\ZZ$, such as orbits of a thin group like the Apollonian group (defined below), satisfy a \emph{local-global} property, namely that they are subject to certain congruence restrictions (local), but otherwise should admit all sufficiently large integers (global). The Apollonian local-global conjecture is due to Graham-Lagarias-Mallows-Wilks-Yan \cite{GLMWY02} and Fuchs-Sanden \cite{FS11}.  Existing lower bounds for the number of integers appearing as curvatures rely on analytic methods, quadratic forms, and the spectral theory of graphs.  This has culminated in the theorem of Bourgain and Kontorovich that amongst the admissible curvatures for an Apollonian circle packing (those values not obstructed by congruence conditions), a set of density one does appear \cite{BK14}.
In this paper, we demonstrate that, for infinitely many (and perhaps most, in a suitable sense) Apollonian circle packings, the local-global conjecture is nevertheless false.

The new obstruction rules out certain power families (such as $n^2$) as curvatures in certain packings.  It has its source in quadratic and quartic reciprocity, making it reminiscent of a Brauer-Manin obstruction.  However, it is strictly a phenomenon of the thin group; its Zariski closure has no such obstruction.

\subsection{Apollonian circle packings}

A theorem often attributed to Apollonius of Perga states that, given three mutually tangent circles in the plane, there are exactly two ways to draw a fourth circle tangent to the first three. Starting with three such circles, we can add in the two solutions of Apollonius (sometimes called Soddy circles after an ode by the famous chemist), obtaining five. New triples appear, and by continuing this process, one obtains a fractal called an \emph{Apollonian circle packing} (Figure~\ref{fig:ACP}). 

\begin{figure}[htb]
	\includegraphics{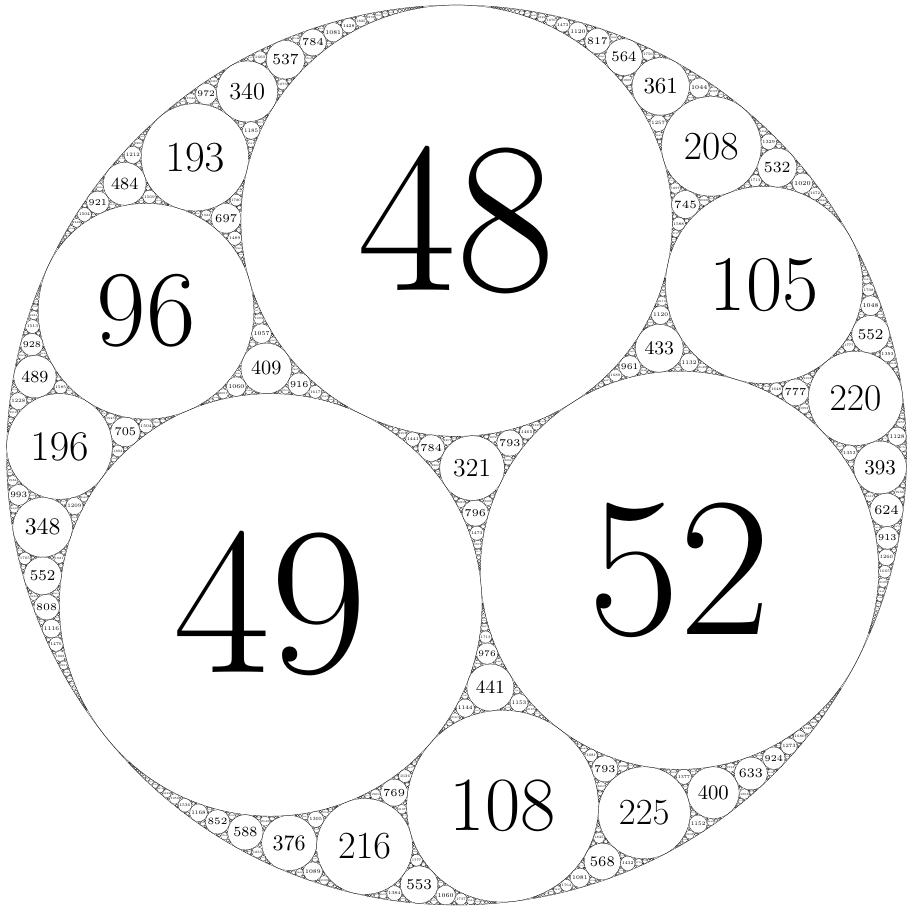}
	\caption{Circles of curvature $\leq 15000$ in the Apollonian circle packing corresponding to $(-23, 48, 49, 52)$. The local-global conjecture is false for every residue class modulo 24 in this packing.}\label{fig:ACP}
\end{figure}

Any solution to Apollonius' problem produces four mutually tangent circles.   
A \emph{Descartes quadruple} is a quadruple of four real numbers $(a, b, c, d)$ satisfying the \emph{Descartes equation} $(a+b+c+d)^2=2(a^2+b^2+c^2+d^2)$, $a+b+c+d>0$.
Given any Descartes quadruple, there exist four mutually tangent circles in the plane with those curvatures (straight lines are included as circles of curvature zero, and a negative curvatures represents a swap of orientation, placing the point at infinity in the interior).
In particular, a Descartes quadruple generates a unique Apollonian circle packing up to rigid motions (whereas an Apollonian circle packing contains many Descartes quadruples). For background, see \cite{GLMWY02}.

A simple but remarkable consequence of the Descartes equation is that if $a, b, c, d\in\ZZ$, then all curvatures in the packing are integral. Call such a configuration, and the packing it generates, \emph{integral}, and if we furthermore have $\gcd(a, b, c, d)=1$, call both \emph{primitive}. We frequently describe packings by the quadruple containing the four smallest curvatures, called the \emph{root quadruple}.

\subsection{The set of curvatures of an Apollonian circle packing}

For the rest of this paper, $\cpack$ will denote a primitive Apollonian circle packing. The study of the curvatures in $\cpack$ was first addressed in \cite[Section 6]{GLMWY02} as the ``Strong Density Conjecture.'' Later revised by Fuchs and Sanden in \cite{FS11}, it has come to be known as the ``local-global conjecture'' or ``local-global principle."
Call a positive curvature $c$ \emph{missing} in $\cpack$ if curvatures equivalent to $c\pmod{24}$ appear in $\cpack$ but $c$ does not.  

\begin{conjecture}[{\cite{GLMWY02,FS11}}]
\label{conj:old-local-global}
The number of missing curvatures in $\cpack$ is finite.
\end{conjecture}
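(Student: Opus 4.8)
\emph{Remark (how one might try, and why it fails).} The natural plan is to attempt finiteness by the standard route for thin-orbit local--global problems, and to locate precisely where it breaks. First I would realize the set of curvatures of $\cpack$ as a thin orbit: fixing a root Descartes quadruple $v_0$, the curvatures appearing in $\cpack$ are exactly the coordinates of the vectors $\gamma v_0$ as $\gamma$ ranges over the Apollonian group $\Gamma$, a thin subgroup of the integral orthogonal group of the Descartes form $Q$. The congruence restrictions modulo $24$ come from the image of $\Gamma$ in $\mathrm{O}_Q(\ZZ/24)$ together with the Descartes equation, and the six or eight admissible classes are precisely those hit. The goal is then the clean statement: every sufficiently large integer in an admissible class occurs as a coordinate of some $\gamma v_0$.

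Second, I would pass from the quaternary problem to a family of binary ones. Having fixed a circle of curvature $a$ in $\cpack$, the curvatures of the circles tangent to it are, after the substitution forced by the Descartes equation, exactly the values $f_a(x,y)$ of an integral binary quadratic form of discriminant $-4a^2$ (equivalently, $a$ plus a shifted binary form), restricted to the orbit of a rank-two ``reflection'' subgroup of $\Gamma$ acting on $(x,y)$. Iterating over all circles in the packing yields a vast supply of such forms; letting $a$ range over curvatures in a given residue class, one hopes the union of the value sets of the $f_a$ exhausts all large integers in the admissible classes. The arithmetic input is the classical description of the integers represented by a binary quadratic form, i.e. splitting conditions in the associated ring class fields.

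Third comes the analytic engine that makes ``thin orbit'' tractable: the spectral gap for $\Gamma$ (Bourgain--Gamburd--Sarnak), the affine sieve and counting estimates of Bourgain--Fuchs, and the circle-method treatment of the quaternary Descartes form by Bourgain and Kontorovich~\cite{BK14}, which already produces a set of admissible curvatures of density one. To upgrade density one to finiteness I would try to bound the exceptional set directly: extract the main term from the major arcs via a singular series (which is positive in every admissible class), bound the minor-arc and Kloosterman-type error terms uniformly in the residue class, and play off the many binary forms $f_a$ against each other so that any integer missed by one is caught by another.

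It is exactly this last step that is the main obstacle -- and, given the title of this paper, a fatal one. The binary forms $f_a$ arising here are far from generic: the condition that a given $n$ be represented by $f_a$ \emph{within the thin orbit} is a reciprocity condition -- a Legendre-symbol relation, or at the next level a biquadratic-residue relation -- tying $n$ to $a$, and as $a$ ranges over a residue class these conditions do not become jointly unrestrictive. For suitable packings they conspire to exclude entire quadratic (and quartic) subfamilies, such as $\{n^2\}$, from every admissible class. This obstruction is invisible to the Zariski closure of $\Gamma$ and to any finite-congruence (mod $M$) analysis, so no amount of sieving at finite level detects it, and the singular series is positive precisely because it cannot see it. Thus the approach does not merely stall: carried out carefully, it exhibits a genuine obstruction and shows the statement to be false.
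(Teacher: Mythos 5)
You were handed a conjecture, not a theorem: the paper attributes Conjecture~\ref{conj:old-local-global} to \cite{GLMWY02,FS11} and contains no proof of it --- its main result (Theorem~\ref{thm:mainthm}) is that the conjecture is \emph{false} for every packing not of type $(6,1,1,1)$ or $(8,11,1)$. Your decision to explain why the natural proof strategy must fail, rather than to fabricate a proof, is therefore the right call, and your diagnosis --- a reciprocity obstruction, invisible to any finite-congruence analysis and to the Zariski closure, excluding quadratic and quartic families such as $\{n^2\}$ --- is exactly the mechanism the paper exhibits.

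Two points in your sketch should be sharpened to match what actually happens. First, the binary sub-problem is not thin: by Sarnak's observation (quoted in Section~\ref{sec:quadratic-forms}), the curvatures tangent to a circle $\cir$ of curvature $a$ are the \emph{full} multiset $\{f_{\cir}(x,y)-a:\gcd(x,y)=1\}$ of properly represented values of a form of discriminant $-4a^2$, not the orbit of a rank-two subgroup; so the obstruction does not live inside the representation theory of any single $f_a$. Second, your phrase ``these conditions do not become jointly unrestrictive'' undersells the rigidity of what goes wrong. The paper defines an invariant $\chi_2(\cir)\in\{\pm1\}$, essentially $\kron{\rho(f_{\cir})}{a}$ where $\rho(f_{\cir})$ is the unique invertible residue class mod $a$ properly represented by $f_{\cir}$; quadratic reciprocity forces tangent circles of coprime curvature to have \emph{equal} $\chi_2$, and a coprime-tangency-path argument (Corollary~\ref{cor:coprimepath}) makes $\chi_2$ constant on all of $\cpack$. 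A curvature $uw^2$ tangent to a circle of curvature $n$ coprime to $6uw^2$ would force $\chi_2(\cpack)=\kron{u}{n}$, a value pinned down by $n\bmod 24$; when this disagrees with the packing's actual $\chi_2(\cpack)$, the entire family $un^2$ is excluded from the packing, not merely from most of the forms. The quartic case replaces the form by its associated lattice in $\ZZ[i]$ and quadratic by quartic reciprocity. So the failure is not a diffuse incompatibility to be fought with more sieving: a single global invariant kills whole power families, producing $\Omega(\sqrt N)$ missing curvatures and refuting the conjecture outright.
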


As evidence toward the conjecture, the Hausdorff dimension of the Apollonian fractal dictates that in primitive packings, multiplicities increase as curvatures increase (see \cite{FS11} for data).  The current best known result toward the conjecture is due to Bourgain and Kontorovich.

\begin{theorem}[{\cite{BK14}}]
The number of missing curvatures up to $N$ is at most $O(N^{1-\eta})$ for some effectively computable $\eta > 0$.
\end{theorem}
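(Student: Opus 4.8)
The natural line of attack, which I would follow, is a Hardy--Littlewood circle method adapted to the thin Apollonian orbit, essentially that of \cite{BK14}.

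\emph{Setup.} Let $\Gamma\le\mathrm{O}_Q(\ZZ)$ be the Apollonian group, generated by the four Descartes reflections fixing the form $Q$, and let $v_0\in\ZZ^4$ be the root quadruple of $\cpack$; then the curvatures appearing in $\cpack$ are the values of a fixed linear functional $\ell(x)=\langle e_1,x\rangle$ on the orbit $\Gamma v_0$ (after adjoining coordinate permutations to $\Gamma$, which changes nothing essential). By \cite{GLMWY02} the only local obstruction to a positive integer $n$ occurring is the congruence condition modulo $24$ defining ``admissible,'' so it suffices to prove that all but $O(N^{1-\eta})$ admissible $n\le N$ occur. I would fix $T\asymp N$, a smooth bump $\omega\ge 0$, and study
\[
\mathcal{R}(n)=\sum_{\gamma\in\mathcal{E}(T)}\omega\!\left(\tfrac{\|\gamma\|}{T}\right)\mathbf{1}\{\ell(\gamma v_0)=n\},\qquad
S(\theta)=\sum_{\gamma\in\mathcal{E}(T)}\omega\!\left(\tfrac{\|\gamma\|}{T}\right)e\bigl(\theta\,\ell(\gamma v_0)\bigr),
\]
with $e(x)=e^{2\pi i x}$, so that $\mathcal{R}(n)=\int_0^1 S(\theta)e(-n\theta)\,d\theta$. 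Crucially $\mathcal{E}(T)\subseteq\Gamma$ should be taken not as the full ball of radius $T$ but (as in \cite{BK14}) as an ensemble of products of two shorter words: large enough that the major-arc main term below is $\gg N^{\delta-1}$ for all admissible $n\le N$, yet structured enough to endow $S(\theta)$ with a bilinear decomposition.

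\emph{Major arcs.} On arcs about rationals $a/q$ with $q$ at most a small power of $T$, one shows $\mathcal{R}(n)$ factors approximately as an archimedean integral times a local sum over $\Gamma\bmod q$. The archimedean factor is controlled by equidistribution of $\Gamma v_0$ with respect to Patterson--Sullivan measure -- equivalently, mixing on the infinite-volume hyperbolic $3$-manifold $\mathbb{H}^3/\Gamma$, whose spectral bottom records the Hausdorff dimension $\delta\approx 1.3056$ -- and is of size $\asymp T^{\delta-1}\asymp N^{\delta-1}$. The $q$-adic factor is the singular series $\mathfrak{S}(n)$, and the decisive input is that the congruence quotients $\Gamma\bmod q$ form a family of expanders (Bourgain--Gamburd--Sarnak, and Salehi Golsefidy--Varj\'u for uniformity in $q$), which forces $\mathfrak{S}(n)\asymp 1$ for admissible $n$ -- exactly as if $\Gamma$ were the full group $\mathrm{O}_Q(\ZZ)$. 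Assembling, $\mathcal{R}(n)=\mathfrak{M}(n)+(\text{error})$ with $\mathfrak{M}(n)\gg N^{\delta-1}(\log N)^{-O(1)}$ for all admissible $n\le N$.

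\emph{Minor arcs, the crux.} The real work is to show $\int_{\mathfrak{m}}|S(\theta)|^2\,d\theta\ll N^{1-\eta}\cdot N^{2(\delta-1)}$, i.e.\ to beat the trivial Parseval bound $\int_0^1|S(\theta)|^2\,d\theta\asymp N^{2\delta-1}$ by a power of $N$. Here I would use the bilinear shape of $\mathcal{E}(T)$: writing $\gamma=\gamma_1\gamma_2$ with $\gamma_1,\gamma_2$ ranging over roughly independent sets makes $S(\theta)$ a bilinear form; Cauchy--Schwarz in one variable converts the $L^2$-mass into a shifted-convolution / off-diagonal count for the orbit, and a second appeal to the spectral gap -- of $\Gamma$ itself and of its congruence quotients, uniformly in $q$ -- supplies the power saving. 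The hard part is exactly this: wringing genuine cancellation out of a \emph{thin} group, whose orbit grows only like $T^\delta$ with $\delta$ strictly below the dimension of its Zariski closure, so that none of the exact spectral theory available for arithmetic lattices can be used directly. Everything else is by now fairly standard thin-group machinery.

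\emph{Conclusion.} With the two estimates in hand, Chebyshev's inequality finishes the argument: the number of admissible $n\le N$ with $\mathcal{R}(n)=0$ is
\[
\ll\ \frac{\sum_{n\le N}\bigl|\mathcal{R}(n)-\mathfrak{M}(n)\bigr|^2}{\min_{n\le N\text{ admissible}}\mathfrak{M}(n)^2}\ \ll\ \frac{N^{1-\eta}\cdot N^{2(\delta-1)}}{N^{2(\delta-1)}(\log N)^{-O(1)}}\ \ll\ N^{1-\eta/2},
\]
the numerator being bounded by the minor-arc integral together with the (smaller) major-arc errors. Because the spectral gap, the uniformity in $q$, the mixing rate, and the orbit count are all effective, the resulting $\eta>0$ is effectively computable. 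The sole genuine obstacle throughout is the minor-arc bilinear estimate.
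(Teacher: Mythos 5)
This statement is not proved in the paper at all: it is quoted verbatim from Bourgain--Kontorovich \cite{BK14}, so the only meaningful comparison is with their argument. Your sketch is a faithful high-level roadmap of that argument --- circle method over the thin orbit, major arcs controlled by infinite-volume counting (Patterson--Sullivan/spectral gap) together with expansion of the congruence quotients $\Gamma \bmod q$ (Bourgain--Gamburd--Sarnak, Salehi Golsefidy--Varj\'u) to make the singular series bounded below on admissible classes, a power-saving $L^2$ bound on the minor arcs, and a Chebyshev argument at the end. The shape of the exponents and the final deduction are right.

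But as a proof it has a genuine gap, and you name it yourself: the minor-arc estimate, which is the entire content of the theorem, is only asserted, and the ensemble $\mathcal{E}(T)$ on which everything depends is described too vaguely (``products of two shorter words'') to carry either the major- or minor-arc analysis. In \cite{BK14} the cancellation is not extracted from an abstract bilinear decomposition of the thin group alone. The ensemble is built so that one factor ranges over (a congruence subgroup of) $\SL(2,\ZZ)$ stabilizing a fixed circle, which is exactly what makes the curvatures in play values of shifted binary quadratic forms $f_{\cir}(x,y)-n$ (the same structure recalled in Section~\ref{sec:quadratic-forms} of this paper); the power saving on the minor arcs then comes from explicit exponential-sum estimates for those quadratic forms in the $(x,y)$ variables, averaged over the group variable using the uniform spectral gap. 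Cauchy--Schwarz plus ``spectral gap of a thin group'' does not by itself beat Parseval --- groups with identical spectral data but no such arithmetic parametrization would not yield the bound --- so the step you flag as ``the crux'' is not merely technical: it is the missing proof, and your proposal should be read as a correct plan rather than an argument.
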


In this paper, we prove that Conjecture \ref{conj:old-local-global} is false for many packings.

\begin{theorem}
	There exist infinitely many primitive Apollonian circle packings for which the number of missing curvatures up to $N$ is $\Omega(\sqrt{N})$. In particular, the local-global conjecture is false for these packings.
\end{theorem}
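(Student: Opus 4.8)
The plan is to reduce the statement to producing a single packing that misses one honest quadratic family of curvatures, to locate such a packing via an arithmetic ``reciprocity character'' that is constant along the thin Apollonian orbit, and then to conclude by counting and by varying the packing over a congruence class. The reciprocity character, built out of quadratic and quartic reciprocity, is the genuinely new ingredient, and it is what makes the obstruction a property of the thin Apollonian group rather than of its Zariski closure.

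\emph{Reduction.} It suffices to exhibit one primitive packing $\cpack_0$, a fixed positive integer $A$, integers $B,C$, a modulus $M$, and an admissible residue class $r$ modulo $24$, such that no curvature of $\cpack_0$ equals $An^{2}+Bn+C$ for any large $n$ with $\gcd(n,M)=1$ and $An^{2}+Bn+C\equiv r\pmod{24}$; such a family contributes $\asymp\sqrt{N}$ genuine missing curvatures below $N$, so $\cpack_0$ alone already has $\Omega(\sqrt{N})$ missing curvatures. For the ``infinitely many packings'' clause I would arrange the hypothesis defining $\cpack_0$ to be a congruence condition on its root quadruple modulo a fixed modulus (plus, if needed, a harmless positivity/primitivity constraint), so that the entire construction applies verbatim to each of the infinitely many distinct primitive packings whose root quadruple meets that condition, each of them then carrying its own family of $\Omega(\sqrt{N})$ missing curvatures.

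\emph{A reciprocity character on curvatures.} The core object is a sign $\chi(\kappa)\in\{\pm1\}$ attached to a suitable $\cpack_0$ and defined on curvatures $\kappa$ coprime to $M$ as a product of Legendre symbols together with a rational quartic (biquadratic) residue symbol relating $\kappa$ to the ``structural'' data of $\cpack_0$ --- curvatures in the root quadruple, the prime $2$, and the primes dividing those curvatures. One first checks that $\chi$ is forced on the packing at a single well-chosen circle $C_0\in\cpack_0$: the curvatures of circles tangent to $C_0$ are, within the class $r$, exactly the integers represented by an integral binary quadratic form built from $C_0$ and an adjacent circle, and for the right choice of $C_0$ this form lies in a class of its discriminant that is \emph{not} separated from its competitors by congruences, so that deciding which integers it represents is governed by biquadratic reciprocity (the classical determination of the integers of the form $x^{2}+64y^{2}$ is the model). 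Hence every curvature tangent to $C_0$ has one fixed value $\varepsilon$ of $\chi$, while the members of a suitable quadratic family --- perfect squares up to the fixed factor $A$, lying in the class $r$ --- have $\chi=-\varepsilon$, since a square is automatically a quartic residue at the relevant primes.

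\emph{Globalisation, and the main obstacle.} To finish I would show that $\chi$ is \emph{invariant along tangencies} throughout $\cpack_0$: if circles of curvatures $\kappa,\kappa'$ are tangent then $\chi(\kappa)=\chi(\kappa')$, up to a universal factor depending only on residues modulo $M$. The mechanism is the Apollonian swap: in a Descartes quadruple $(\kappa,\kappa',\mu,\nu)$ the swap replaces $\kappa$ by $\kappa''$ with $\kappa+\kappa''=2(\kappa'+\mu+\nu)$, while the Descartes equation also gives $\kappa\kappa''=2((\kappa')^{2}+\mu^{2}+\nu^{2})-(\kappa'+\mu+\nu)^{2}$ and the square identity $(\kappa-\kappa'')^{2}=16(\kappa'\mu+\kappa'\nu+\mu\nu)$; feeding these into the quadratic and quartic reciprocity laws, and using the known congruences modulo $24$ of all curvatures in a packing to evaluate each reciprocity sign flip, one obtains $\chi(\kappa'')=\chi(\kappa)$. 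Since the tangency graph of $\cpack_0$ is connected, $\chi$ is then constant, equal to $\varepsilon$, on all curvatures of $\cpack_0$ coprime to $M$ lying in the class $r$; as the chosen family has $\chi=-\varepsilon$, none of its $\asymp\sqrt{N}$ members below $N$ can occur, proving the theorem. I expect this last step to be the main difficulty: $\chi$ must be engineered so that its invariance under all four Apollonian generators survives the sign flips of reciprocity, those flips must be controlled entirely through the mod-$24$ data that Apollonian packings provide, and one must then verify that the resulting character, though constant along the thin Apollonian orbit, is \emph{not} a congruence condition on the integer points of the Zariski closure of the Apollonian group --- which is precisely why it is invisible to strong approximation and to the density-one theorem of Bourgain--Kontorovich, and why Conjecture~\ref{conj:old-local-global} could be simultaneously believed and false.
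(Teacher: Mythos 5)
Your overall architecture (an invariant constant across the packing by reciprocity, a quadratic family forced to take the opposite value, connectivity, a $\Omega(\sqrt{N})$ count, infinitude via congruence conditions on root quadruples) matches the shape of the actual argument, and your reduction and counting steps are fine. But the object at the heart of your plan is never defined, and the shape you prescribe for it cannot work. You ask for a character $\chi(\kappa)$ of the curvature \emph{value} alone, built from Legendre symbols and a rational quartic symbol relating $\kappa$ to fixed structural data of $\cpack_0$. If those symbols have fixed modulus (the structural primes), then $\chi$ is periodic in $\kappa$, i.e.\ a congruence condition; but Fuchs proved that the modulo-$24$ restrictions are the only congruence restrictions on curvatures, so within your admissible class $r$ the curvatures occupy every residue class modulo any fixed modulus compatible with $r$, including the classes occupied by your family members, and a periodic $\chi$ equal to $\varepsilon$ on all curvatures is then also $\varepsilon$ on the family --- it cannot be $-\varepsilon$ there. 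If instead you place $\kappa$ in the denominator of a quartic symbol (your $x^2+64y^2$ model), then $\chi$ is not constant on the family $\{uw^2\}$: your justification that ``a square is automatically a quartic residue'' is false, since $w^2$ is a quartic residue modulo $p$ exactly when $w$ is a quadratic residue; it is \emph{fourth powers} that are automatic quartic residues, which is precisely why the quartic symbol in the paper obstructs families $uw^4$ and the quadratic symbol obstructs $uw^2$. The invariant that actually exists is attached to the \emph{circle}, not to the integer $\kappa$: for $\cir$ of curvature $n$, the curvatures tangent to $\cir$ are values of a quadratic form $f_\cir$ of discriminant $-4n^2$ shifted by $-n$; modulo $n$ these values lie in a single square class (Proposition~\ref{prop:singleton}), and $\chi_2(\cir)$ is the Kronecker symbol of that class against $n$ itself, so the ``numerator'' varies with the circle and the ``denominator'' is the circle's own curvature. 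Nothing in your proposal plays this role, and without it the claim that all circles tangent to $C_0$ carry one fixed value $\varepsilon$ has no proof.

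The second gap is the propagation step, which you correctly flag as the main difficulty but only assert. The swap identities you quote are correct, yet there is no computation showing the reciprocity sign flips cancel, and since $\chi$ is undefined there is nothing to compute with; moreover you conflate invariance under the Apollonian swap ($\kappa\mapsto\kappa''$, circles which are generally \emph{not} tangent) with invariance along tangencies. The actual propagation goes through tangency and coprimality: tangent circles of coprime curvatures $a,b$ both represent $a+b$, so one compares $\kron{a+b}{a}$ with $\kron{a+b}{b}$ via quadratic reciprocity (with separate cases at $2$); this needs coprimality, supplied by the lemma that two tangent circles always have a common tangent circle of curvature coprime to both (from the parametrization of the tangency family, Lemma~\ref{lem:makecoprime}), which yields coprime paths between arbitrary circles (Corollary~\ref{cor:coprimepath}); and the final contradiction needs every circle to touch one of curvature coprime to $6n$ (Lemma~\ref{lem:coprimetangent}) so that $\kron{u}{n}$ is pinned down by $n\bmod 24$ and can be played off against the packing invariant. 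These coprimality devices, together with the circle-level (form- or $\ZZ[i]$-lattice-level) definition of the invariant, are the substance of the proof; they are absent from your plan, so as it stands the proposal does not establish the theorem.
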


More precisely (Theorem~\ref{thm:mainthm}), certain quadratic and quartic families of curvatures (of the form $ux^2$ and $ux^4$ for a fixed integer $u$) are missing from some packings.

While Conjecture \ref{conj:old-local-global} is false in general, it may still hold for some packings.  Otherwise, we can account for the quadratic and quartic obstructions, and ask if the remaining set of missing curvatures is now finite.

\begin{definition}
	Define the \emph{sporadic set} $S_{\cpack}$ to be the set of missing curvatures that do not lie in one of the quadratic or quartic obstruction classes described in Theorem \ref{thm:mainthm}.
\end{definition}

We propose a replacement for Conjecture~\ref{conj:old-local-global}.

\begin{conjecture}\label{conj:newlocalglobal}
	The set $S_{\cpack}$ is finite.
\end{conjecture}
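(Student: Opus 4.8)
Since Conjecture~\ref{conj:newlocalglobal} is offered as a conjecture rather than a theorem, what follows is a proposed line of attack rather than a proof. The strategy is to combine the two known ``halves'' of the problem: on one side, the density-one result of Bourgain--Kontorovich (and the analytic/quadratic-form/spectral methods behind it), which controls the \emph{bulk} of the curvatures that do appear; on the other, Theorem~\ref{thm:mainthm}, which isolates the reciprocity obstructions that are known to occur. The conjecture asserts that nothing else is hiding. The plan is therefore in two steps: (1) give a \emph{complete classification} of the reciprocity-type obstructions, showing that the quadratic families $ux^2$ and quartic families $ux^4$ of Theorem~\ref{thm:mainthm} are the only sources of systematically missing curvatures in $\cpack$; and (2) upgrade the known density-one statement to the statement that every sufficiently large admissible curvature lying outside these families actually appears.

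For step (1), I would work with the standard description of curvatures in $\cpack$ via shifted binary quadratic forms: fixing a circle of curvature $a$ in the packing, the curvatures of circles tangent to it are precisely the values $f(x,y)-a$ for an explicit family of integral binary quadratic forms $f$ coming from the stabilizer of that circle in the Apollonian group, subject to congruence conditions and an archimedean positivity condition. A curvature $n$ is then missing only if, for every admissible pair $(a,f)$, the equation $f(x,y)=n+a$ has no admissible solution. The reciprocity obstruction arises exactly when $n+a$ is \emph{locally} represented by $f$ (lies in the principal genus, say) but is excluded by a spinor-genus or quartic-residue condition visible only to the thin group and not to its Zariski closure --- the phenomenon reminiscent of a Brauer--Manin obstruction. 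The target of step (1) is to prove that when $n$ is not of the shape $ux^2$ or $ux^4$, one can always choose a tangency configuration $(a,f)$ avoiding all such obstructions: morally a pigeonhole argument over the many tangency configurations available, exploiting that a ``generic'' $n$ has enough freedom in its factorization to dodge the finitely many residue symbols entering the analysis. Establishing that these are genuinely the \emph{only} obstructions --- rather than merely the ones we have found --- is already deep, and may require a much more complete understanding of the arithmetic of the thin Apollonian group.

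For step (2), with the obstructions classified one needs an \emph{effective} counting input for the thin orbit: current technology (circle method and spectral-gap methods, as in Bourgain--Kontorovich) already yields that a density-one set of admissible curvatures appears, with a power-saving exceptional set of size $O(N^{1-\eta})$. The content of step (2) is to show that, once the $ux^2$ and $ux^4$ families are removed, this exceptional set is not merely thin but \emph{finite}. This is where I expect the main obstacle to lie: present methods produce ``density one'' or ``$1-O(N^{-\eta})$'' statements, not ``all sufficiently large,'' and there is no known mechanism to rule out a sparse, unstructured set of sporadic exceptions. It is quite possible that this last step is beyond reach of current techniques --- which is exactly why the assertion is stated as a conjecture, supported by the computational evidence in this paper rather than by proof. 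A more modest intermediate goal would be to show $|S_{\cpack}\cap[1,N]| = O(N^{1-\eta'})$ for some exponent $\eta'$ strictly larger than the Bourgain--Kontorovich exponent, by feeding the new reciprocity obstruction back into their exceptional-set analysis, or to prove finiteness of $S_{\cpack}$ conditionally on a suitable sieve-theoretic or GRH-type hypothesis.
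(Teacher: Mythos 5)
You have correctly recognized that this statement is a conjecture, and indeed the paper offers no proof of it: Conjecture~\ref{conj:newlocalglobal} is supported in the paper only by computational evidence (Section~\ref{sec:computations}), namely computations of the sporadic sets $S_{\cpack}\cap[1,N]$ for three small root quadruples of each of the fourteen types in Theorem~\ref{thm:mainthm}, with $N$ in the range $[10^{10},10^{12}]$ chosen so that $N$ exceeds the largest sporadic curvature found by a factor of at least ten. So there is no proof in the paper to compare your proposal against; the honest answer, which you give, is that finiteness of $S_{\cpack}$ is open. Your two-step program is a reasonable articulation of what a proof would require, and your diagnosis of where it breaks down matches the paper's implicit position: step (2), upgrading the Bourgain--Kontorovich density-one statement (exceptional set of size $O(N^{1-\eta})$) to ``all but finitely many,'' has no known mechanism, and step (1), certifying that the $ux^2$ and $ux^4$ families of Theorem~\ref{thm:mainthm} exhaust the systematic obstructions, is also not established --- the paper itself is careful to note (after Proposition~\ref{prop:6robstructions}) that absence of a family from its tables only means the method does not rule it out, and it even raises the possibility of ``partial'' obstructions for $u$ with prime divisors $\geq 5$, which would complicate any pigeonhole argument of the kind you sketch. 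One small correction of framing: the paper's obstruction is not a spinor-genus condition on the forms $f_{\cir}$; it is a reciprocity condition, the constancy across the packing of the symbols $\chi_2$ and $\chi_4$ built from the Kronecker and quartic residue symbols of the shifted represented values, which is why it is invisible to the Zariski closure. In short, your proposal is an acceptable research outline and does not overclaim, but it is not a proof, and neither the paper nor current technology supplies one; the paper's ``support'' is empirical where yours is programmatic.
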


Using C and PARI/GP \cite{PARI}, we computed the sporadic set intersected with $[1, N]$ for various packings $\cpack$ and bounds $N$ in the range $[10^{10}, 10^{12}]$ (see GitHub repositories \cite{GHapollonian, GHmissingcurvatures} and Section~\ref{sec:computations}). The data appears to support Conjecture \ref{conj:newlocalglobal}.

\subsection{The method of proof}

To illustrate the method, we provide an example theorem.

\begin{theorem}
	The Apollonian circle packing $\cpack$ generated by the quadruple $(-3,5,8,8)$ has no square curvatures.
\end{theorem}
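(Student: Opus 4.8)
The plan is to combine an elementary congruence reduction with a reciprocity obstruction. Recall first that the curvatures of $\cpack$ are precisely the entries of the quadruples in the orbit of the root quadruple $(-3,5,8,8)$ under the Apollonian group $\langle S_1,S_2,S_3,S_4\rangle$, where $S_i$ replaces the $i$-th entry $x_i$ of a quadruple by $2(x_1+x_2+x_3+x_4)-3x_i$, the second root of the Descartes equation. A short induction over this orbit, tracking each quadruple modulo $24$, shows that every curvature of $\cpack$ lies in one of six residue classes mod $24$; intersecting that set with the squares modulo $24$ leaves only $0$ and $12$, so any square curvature is $\equiv 0$ or $12 \pmod{24}$, hence divisible by $9$. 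This confines a hypothetical square curvature to a very thin sub-family, but congruences alone cannot finish, since lifting to moduli $2^k$ or $3^k$ never yields a contradiction.

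The heart of the argument is a reciprocity computation applied to a putative square curvature $n = m^2$. Using the generation tree, place $n$ in a Descartes quadruple $(a,b,c,n)$ of $\cpack$ in which $n$ is the entry most recently created, so that $n > a,b,c$ and $n = a+b+c+2t$ with $t^2 = ab+bc+ca$ (the classical fact that $ab+bc+ca$ is a perfect square for an integral Descartes quadruple). One then evaluates a Jacobi symbol of the shape $\big(\tfrac{\ast}{n}\big)$, the numerator $\ast$ being a fixed expression in $a,b,c$ coprime to $n$ (for the finer families one uses instead a quartic residue symbol in $\ZZ[i]$ attached to each circle via the curvature/co-curvature/center identity $N(w) = bb' \pm 1$). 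On one hand, since $n = m^2$ is a perfect square, any such symbol with numerator coprime to $n$ equals $+1$. On the other hand, quadratic reciprocity rewrites $\big(\tfrac{\ast}{n}\big)$ — up to a sign governed by $n$ and $\ast$ modulo $8$, data already controlled by the mod-$24$ analysis — as a symbol depending only on $n$ modulo the bounded integer $\ast$; and the congruence structure of the orbit pins that residue to a class on which the symbol is forced to be $-1$. This contradiction rules out square curvatures.

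The main obstacle is the last claim: that the relevant quadratic (resp.\ quartic) symbol is genuinely \emph{constant} as $(a,b,c,n)$ ranges over all Descartes quadruples of $\cpack$ with $n$ the newest entry, so that the obstruction never cancels. This is precisely where the thin Apollonian group, rather than its Zariski closure, is essential: one must control the image of the Apollonian group in the appropriate congruence quotients — modulo powers of $2$, and modulo suitable ideals of $\ZZ[i]$ for the quartic part — tightly enough to guarantee that $n$ and its quadruple-mates always land in residue classes of the ``wrong'' character. Establishing this uniform control, together with the reciprocity bookkeeping that turns it into the contradiction above, is where essentially all the work lies; the same scheme, carried out for a general root quadruple, is what yields Theorem~\ref{thm:mainthm}.
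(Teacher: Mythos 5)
Your congruence reduction (squares can only land in the admissible classes $0,12\bmod 24$ of this packing) is correct but peripheral; the genuine gap is the step you yourself flag as ``the main obstacle.'' The claim that the symbol attached to a hypothetical square curvature is forced to equal $-1$ \emph{is} the theorem, and the mechanism you propose for it --- controlling the image of the Apollonian group in fixed congruence quotients (powers of $2$, ideals of $\ZZ[i]$) --- cannot succeed. By Fuchs' strong-approximation results, the orbit of quadruples modulo any fixed modulus is constrained only by the mod $24$ data, and squares are admissible mod $24$ here; no fixed-modulus information about $n$ and its quadruple-mates can distinguish an actual perfect square from an integer merely congruent to one, so no such congruence control can produce the sign $-1$. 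Your own opening observation that lifting to moduli $2^k$, $3^k$ never yields a contradiction already points at this tension. A secondary slip: after reciprocity the symbol $\kron{\ast}{n}$ does not become a symbol with bounded modulus, since your numerator $\ast$ is built from $a,b,c$, which are unbounded in the packing; and the paper's argument never needs the quadruple in which $n$ is ``newest,'' nor the identity $t^2=ab+bc+ca$.

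What is actually needed, and what the paper does, is an invariant whose modulus \emph{varies with the circle}: for a circle $\cir$ of curvature $n$, the tangent curvatures are the proper values of $f_\cir(x,y)-n$ with $\disc f_\cir=-4n^2$, and modulo $n$ the invertible values of $f_\cir$ fill a single square class (Proposition~\ref{prop:singleton}); hence $\chi_2(\cir):=\kron{c}{n}$ is well defined, where $c$ runs over tangent curvatures coprime to $n$. Quadratic reciprocity, using that all curvatures of this packing are $0,1\pmod 4$ (Proposition~\ref{prop:admissibleresidues}), gives $\chi_2(\cir_1)\chi_2(\cir_2)=\dkron{a}{b}\dkron{b}{a}=1$ for tangent circles of coprime curvatures $a,b$, and Corollary~\ref{cor:coprimepath} connects any two circles by a path of coprime tangencies, so $\chi_2$ is a packing invariant. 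Evaluating on the root quadruple gives $\chi_2(\cpack)=\kron{8}{5}=-1$, while a circle of square curvature would force $\chi_2(\cpack)=+1$, a contradiction. The decisive symbol has modulus $n$ itself rather than a fixed congruence modulus; this is precisely why the obstruction is invisible to congruence quotients of the thin group and must be extracted from reciprocity, and it is the piece your proposal leaves unsupplied.
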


\begin{proof}
Fix  $\cir \in \cpack$ of curvature $n$. 
All curvatures in $\cpack$ are $0,1\pmod{4}$ by the congruence obstruction classification in Proposition~\ref{prop:admissibleresidues}.  It is well-known that the curvatures of the family of circles tangent to $\cir$ are the values properly represented (i.e. $\gcd(x, y)=1$) by a translated quadratic form $f_{\cir}(x,y) - n$ of discriminant $-4n^2$ (Section~\ref{sec:quadratic-forms}).  
Modulo $n$, the form $f_\cir$ becomes degenerate, being equivalent to $Ax^2$ for some coefficient $A$.  Since the numerator of the Kronecker symbol $\kron{ \cdot }{n}$ is invariant up to multiplication by squares and translation by $n$, the invertible values of $f_\cir$ reside in a single multiplicative coset of the squares (more detail is provided in Proposition~\ref{prop:singleton}). We can therefore define the symbol $\chi_2(\cir)$ to be the unique non-zero value of the Kronecker symbol $\kron{c}{n}$ as $c$ ranges over the curvatures of the circles tangent to $\cir$.

Let $\cir_1, \cir_2 \in \cpack$ be tangent, having non-zero coprime curvatures $a$ and $b$ respectively. Using quadratic reciprocity of the Kronecker symbol,
\[\chi_2(\cir_1)\chi_2(\cir_2)=\dkron{a}{b}\dkron{b}{a}=1.\]
In particular, $\chi_2(\cir_1) = \chi_2(\cir_2)$.

It can be shown that any two circles in $\cpack$ are connected by a path of consecutively coprime curvatures (Corollary~\ref{cor:coprimepath}).  Therefore, $\chi_2(\cir)$ is independent of the choice of circle $\cir \in \cpack$, and we have defined $\chi_2(\cpack)$.  Using the root quadruple, we compute
\[\chi_2(\cpack)=\dkron{8}{5} = -1.\]
We conclude that if there exists a circle $\cir$ of square curvature in $\cpack$, it would give $\chi_2(\cpack) = 1$, a contradiction. Therefore there do not exist square curvatures in $\cpack$.
\end{proof}

For general quadratic obstructions, we must modify the definition of $\chi_2(\cir)$ for technical reasons. 
The quartic obstructions are not dissimilar in spirit:  instead of studying the quadratic form of tangent curvatures,  we study the associated fractional ideal of $\ZZ[i]$, a perspective espoused in \cite{KS18}.

\subsection{Reciprocity obstructions in thin groups}
Call the quadratic and quartic obstructions found in this paper \emph{reciprocity obstructions}.  These are a feature of the orbit of the Apollonian group (i.e. all Descartes quadruples in a fixed packing), and not the orbit of the larger Zariski closure, which is an orthogonal group of transformations preserving the Descartes equation (the Zariski closure was found in \cite{Fuchs11}). Indeed, the super-Apollonian group lies between both groups, and an orbit contains all primitive integral Apollonian circle packings (see \cite{GLMWY06}), which contain all integers.  Sarnak also remarked on the lack of a spin group obstruction for the Descartes form in \cite[p. 301-302]{SarnakMonthly}.  

In the more general setting of thin groups, some orbits may not have reciprocity obstructions, or even congruence obstructions. For example, in \cite{Kont13}, it is shown that there are no congruence obstructions for Zaremba's conjecture. Furthermore, in \cite{Kont19}, it is shown that the local-global conjecture \emph{does} hold for Soddy sphere packings.  Despite this, one expects other instances of thin groups or semigroups to produce reciprocity obstructions; another example is studied in the follow-up paper \cite{RickardsStangeTwo}.

\subsection*{Acknowledgements}  The authors are grateful to the Department of Mathematics at the University of Colorado Boulder for sponsoring the Research Experience for Undergraduates and Graduates in Summer 2023, which led to this project. (These obstructions were first observed in the context of another problem; see Corollary~\ref{cor:discovery}.)  We are also grateful to Alex Kontorovich, Peter Sarnak, Richard Evan Schwartz and the anonymous reviewers for feedback on an earlier version of this paper, and to Arjun Gandhi, Ilyas Salhi, and Matthew Litman.

\section{Precise statement of the results}
	
\begin{proposition}\label{prop:admissibleresidues}
 	Let $R(\cpack)$ be the set of residues modulo $24$ of the curvatures in $\cpack$.  Then $R(\cpack)$ is one of six possible sets, labelled by a \emph{type} as follows:
	\tablestretch{
		\begin{tabular}{|c|c|} 
			\hline
			Type      & $R(\cpack)$ \\ \hline
			$(6, 1)$  & $0, 1, 4, 9, 12, 16$\\ \hline
			$(6, 5)$  & $0, 5, 8, 12, 20, 21$\\ \hline
			$(6, 13)$ & $0, 4, 12, 13, 16, 21$\\ \hline
			$(6, 17)$ & $0, 8, 9, 12, 17, 20$\\ \hline
			$(8, 7)$  & $3, 6, 7, 10, 15, 18, 19, 22$\\ \hline
			$(8, 11)$ & $2, 3, 6, 11, 14, 15, 18, 23$\\ \hline 
		\end{tabular}
    }
\end{proposition}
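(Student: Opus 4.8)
The plan is to reduce the statement to a finite computation controlled by the explicit action of the Apollonian group, organized by the Chinese Remainder Theorem. First I would recall from \cite{GLMWY02} that the Descartes quadruples occurring in $\cpack$ form a single orbit $\Gamma v_0$ of the Apollonian group $\Gamma=\langle S_1,S_2,S_3,S_4\rangle$, where $v_0$ is the root quadruple and $S_1(a,b,c,d)=(2b+2c+2d-a,\,b,\,c,\,d)$ (and analogously for $S_2,S_3,S_4$) is the Vieta involution on the Descartes form. Hence $R(\cpack)$ is the set of residues modulo $24$ of the entries of the vectors in $\Gamma v_0$, and this depends only on $v_0\bmod 24$. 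Since $24=8\cdot 3$, it suffices to determine the possible sets of entry-residues modulo $8$ and modulo $3$ separately and then recombine. A useful preliminary observation is that $\gcd(a,b,c,d)$ is a $\Gamma$-invariant (any common divisor of $a,b,c,d$ divides $2b+2c+2d-a$, and each $S_i$ is an involution), so in a primitive packing every quadruple is primitive; in particular no quadruple has all entries even, nor all divisible by $3$.

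For the reduction modulo $3$: squares are $0$ or $1$ modulo $3$, so a quadruple with exactly $k$ entries prime to $3$ satisfies $(a+b+c+d)^2\equiv 2(a^2+b^2+c^2+d^2)\equiv 2k\pmod 3$; as the left side lies in $\{0,1\}$ this forces $k\in\{2,3\}$ ($k=0$ being excluded by primitivity, and $k=1,4$ giving $2k\equiv 2$). A short inspection then shows that the nonzero entries must all be congruent to one fixed $\varepsilon\in\{1,2\}$ modulo $3$: a mixed pattern among them is incompatible with the value of $a+b+c+d\bmod 3$ forced by the Descartes equation (which must be nonzero when $k=2$ and zero when $k=3$). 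Since $k<4$, each quadruple also has a zero entry modulo $3$. One checks that $\varepsilon$ is preserved by each $S_i$ on admissible quadruples — the apparent escape to $\varepsilon'\ne\varepsilon$ only ever involving an inadmissible quadruple with one or four entries prime to $3$ — so the entry set of $\cpack$ modulo $3$ is $\{0,1\}$ or $\{0,2\}$, and both occur.

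For the reduction modulo $8$: reducing the Descartes equation modulo $16$ shows every quadruple has exactly two odd and two even entries (four odd is impossible, since then the equation would force a perfect square to be $\equiv 2\pmod 4$; one or three odd makes $a+b+c+d$ odd while the right side is even; zero odd is excluded by primitivity). A further analysis modulo $8$, combined with an explicit computation of the finite orbit of each admissible quadruple under the four involutions, should show that the entry set of $\cpack$ modulo $8$ is \emph{exactly} one of $\{0,1,4\}$, $\{0,4,5\}$, or $\{2,3,6,7\}$, that the choice is again $\Gamma$-invariant, and that all three occur. Ruling out mixed sets such as $\{0,1,4,5\}$ is the point that genuinely requires tracking the group dynamics rather than just the congruence.

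Recombining via the Chinese Remainder Theorem, the $2\times 3=6$ compatible pairs of entry sets modulo $3$ and modulo $8$ produce exactly the six sets in the table — six residues modulo $24$ when the mod-$8$ set has three elements, eight residues when it is $\{2,3,6,7\}$ — and exhibiting root quadruples for each type, such as $(0,0,1,1)$ for $(6,1)$, $(-3,5,8,8)$ for $(6,5)$, and $(-1,2,2,3)$ for $(8,11)$, shows that all six types are realized. I expect the main obstacle to be the mod-$8$ step: one must establish not merely that each claimed set is closed under the four moves, but that the orbit \emph{fills} it — that $R(\cpack)$ reduced modulo $8$ is the whole three- or four-element set and not a proper subset — uniformly over all admissible starting quadruples. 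The cleanest way to settle this is probably to verify, by a brief computer enumeration or by hand exploiting the symmetry among the four coordinates, that $\Gamma$ acts transitively on the admissible quadruples modulo $24$ within each type, which then forces $R(\cpack)$ to equal the set listed for that type.
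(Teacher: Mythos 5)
Your overall architecture (split into mod $3$ and mod $8$, recombine by CRT, exhibit examples of each type) matches the paper's, and your mod $3$ and parity analyses are fine. The genuine gap is in the mod $8$ step, and it is exactly where you wave your hands: you assert that ``an explicit computation of the finite orbit of each admissible quadruple under the four involutions'' will rule out mixed entry sets such as $\{0,1,4,5\}$, and that this ``genuinely requires tracking the group dynamics rather than just the congruence.'' In fact the dynamics modulo $8$ (or $24$) cannot rule these out. For example, $(0,0,1,5)$ satisfies the Descartes congruence modulo $8$ (indeed modulo $16$), has two odd and two even entries, is primitive mod $8$, and its orbit under the reductions of $S_1,\dots,S_4$ stays among such solutions and mixes the residues $1$ and $5$; the same happens for $(2,2,3,3)$. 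So if your finite enumeration starts from all congruence-admissible quadruples mod $8$, it will report spurious mixed types; and if it starts only from reductions of genuine integral quadruples, you must first characterize which residue classes those are --- which is precisely the point at issue. What is needed is an arithmetic input beyond congruences mod $24$: the paper proves that for tangent circles of curvatures $a,b$ in a primitive packing one has $a+b\not\equiv 3,6,7\pmod 8$, via the identity $(a-b)^2+(c-d)^2=2(a+b)(c+d)$, the fact that $v_p$ of a sum of two squares is even for $p\equiv 3\pmod 4$, and primitivity (this is its Proposition~\ref{prop:aplusbrestriction}). That single fact kills $(0,0,1,5)$, $(2,2,3,3)$, and all other mixed patterns, and then the short case analysis plus the moves yields the three mod $8$ classes. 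The paper even flags this subtlety: $(0,0,1,13)$ solves the Descartes congruence mod $24$ but is not the reduction of any primitive quadruple, because $1+13\equiv 6\pmod 8$.

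Two smaller points. First, one could in principle replace the missing ingredient by a deeper $2$-adic computation (e.g.\ the mixed patterns above already fail modulo $32$), but that is not what you proposed, and you would still have to verify that every mixed pattern dies at some fixed $2$-power. Second, your CRT recombination needs the statement that a \emph{single} curvature realizes a prescribed pair of residues mod $3$ and mod $8$ simultaneously; the paper quotes Fuchs' thesis (or a direct computation) for this. Your suggestion of verifying transitivity of the reduced group on the admissible quadruples mod $24$ within each type would supply it, but only after the genuine classes mod $24$ have been correctly identified, which again requires the tangency-sum constraint above.
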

The set $R(\cpack)$ is called the \emph{admissible set} of the packing. The type $(x, k)$ denotes that $R(\cpack)$ has cardinality $x$, and the smallest positive residue in $R(\cpack)$ coprime\footnote{It is helpful later to have an invertible residue on hand, e.g. proof of Proposition~\ref{prop:6robstructions}.} to $24$ is $k$.

\begin{definition}\label{def:obstructiondef}
	Let $S_{d,u}:=\{un^d:n\in\ZZ\}$, $u, d > 0$. We say that the set $S_{d,u}$ forms a \textit{reciprocity obstruction} to $\cpack$ if infinitely many elements of $S_{d,u}$ are admissible in $\cpack$ modulo 24, and yet no element of $S_{d,u}$ appears as a curvature in $\cpack$.
	If $d=2$, we call it a \textit{quadratic obstruction}, and if $d=4$, it is a \textit{quartic obstruction}.
\end{definition}

It is clear that if there exists a reciprocity obstruction for $\cpack$, then the local-global conjecture \ref{conj:old-local-global} cannot hold for $\cpack$, and more specifically, for any of the admissible residue classes intersecting $S_{d, u}$. 

We will show that there exists a function
\[\chi_2:\{\text{circles in $\cpack$}\}\rightarrow \{\pm 1\},\]
which is constant across $\cpack$. In particular, this gives a well defined value for $\chi_2(\cpack)$. Furthermore, there exists a function 
\[\chi_4:\{\text{circles in a packing $\cpack$ of type $(6, 1)$ or $(6, 17)$}\}\rightarrow \{1, i, -1, -i\},\]
which satisfies $\chi_4(\cir)^2=\chi_2(\cir)$, and is also constant across a packing, defining $\chi_4(\cpack)$.
 
The value of $\chi_2$ determines the quadratic obstructions, and $\chi_4$ the quartic ones.  Full definitions come in Sections \ref{sec:chi2def} and \ref{sec:chi4def}, but in the simplest cases, $\chi_2(\cpack) = \kron{b}{a}$ for any coprime pair of curvatures $a$ and $b$ of tangent circles in $\cpack$.
The definition of $\chi_4$ relies on a finer invariant using the quartic residue symbol.  
The constancy across a packing follows from quadratic and quartic reciprocity.

\begin{definition}
	The (extended) type of $\cpack$ is either the triple $(x, k, \chi_2)$ or $(x, k, \chi_2, \chi_4)$, where $\cpack$ has type $(x, k)$ and corresponding values of $\chi_2$ (and $\chi_4$, if relevant).
\end{definition}

\begin{theorem}\label{thm:mainthm}
	The type of $\cpack$ implies the existence of certain quadratic and quartic obstructions, as described by the following table (which also includes the list of residues modulo $24$ where Conjecture \ref{conj:old-local-global} is false, and those where it is still open):

    \tablestretch{
     \begin{tabular}{|l|c|c|c|c|} 
        \hline
        Type              & Quadratic Obstructions  & Quartic Obstructions    & \ref{conj:old-local-global} false & \ref{conj:old-local-global} open\\ \hline
        $(6, 1, 1, 1)$    &                         &                         &                                   & $0, 1, 4, 9, 12, 16$\\ \hline
        $(6, 1, 1, -1)$   &                         & $n^4, 4n^4, 9n^4, 36n^4$ & $0, 1, 4, 9, 12, 16$              & \\ \hline
        $(6, 1, -1)$      & $n^2, 2n^2, 3n^2, 6n^2$ &                         & $0, 1, 4, 9, 12, 16$              & \\ \hline
        $(6, 5, 1)$       & $2n^2, 3n^2$            &                         & $0, 8, 12$                        & $5, 20, 21$\\ \hline
        $(6, 5, -1)$      & $n^2, 6n^2$             &                         & $0, 12$                           & $5, 8, 20, 21$\\ \hline
        $(6, 13, 1)$      & $2n^2, 6n^2$            &                         & $0$                               & $4, 12, 13, 16, 21$\\ \hline
        $(6, 13, -1)$     & $n^2, 3n^2$             &                         & $0, 4, 12, 16$                    & $13, 21$\\ \hline
        $(6, 17, 1, 1)$   & $3n^2, 6n^2$            & $9n^4, 36n^4$           & $0, 9, 12$                        & $8, 17, 20$\\ \hline
        $(6, 17, 1, -1)$  & $3n^2, 6n^2$            & $n^4, 4n^4$             & $0, 9, 12$                        & $8, 17, 20$\\ \hline
        $(6, 17, -1)$     & $n^2, 2n^2$             &                         & $0, 8, 9, 12$                     & $17, 20$\\ \hline
        $(8, 7, 1)$       & $3n^2, 6n^2$            &                         & $3, 6$                            & $7, 10, 15, 18, 19, 22$\\ \hline
        $(8, 7, -1)$      & $2n^2$                  &                         & $18$                              & $3, 6, 7, 10, 15, 19, 22$\\ \hline
        $(8, 11, 1)$      &                         &                         &                                   & $2, 3, 6, 11, 14, 15, 18, 23$\\ \hline
        $(8, 11, -1)$     & $2n^2, 3n^2, 6n^2$      &                         & $2, 3, 6, 18$                     & $11, 14, 15, 23$\\ \hline
    \end{tabular}
    }
\end{theorem}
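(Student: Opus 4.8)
The plan is to construct the invariants $\chi_2$ and $\chi_4$ promised just above the statement, to prove that each is genuinely well defined and constant on a packing, and then to read off every obstruction by computing what value these invariants are forced to take from the point of view of a hypothetical curvature of the form $um^d$.

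First I would build $\chi_2$. Fix a circle $\cir\in\cpack$ of curvature $n$. The curvatures of the circles tangent to $\cir$ are exactly the integers properly represented by the translated binary quadratic form $f_\cir-n$ of discriminant $-4n^2$ (Section~\ref{sec:quadratic-forms}); reducing modulo $n$ makes $f_\cir$ degenerate, so by Proposition~\ref{prop:singleton} all of its values coprime to $n$ lie in a single square class modulo $n$, and hence the Kronecker symbol $\kron{c}{n}$ takes one fixed nonzero value as $c$ runs over such curvatures. In the cleanest case (curvatures $\equiv 0,1 \pmod 4$) one simply sets $\chi_2(\cir)$ equal to this value; when the curvatures of $\cpack$ lie in $\{2,3\}\pmod 4$ the symbol $\kron{c}{n}$ is not stable under reciprocity, and the definition must be twisted by a correction depending only on $n$ modulo a small power of $2$ -- this is the modification mentioned before the statement, and it is where most of the care goes. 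For two tangent circles $\cir_1,\cir_2$ with coprime curvatures $a,b$, (twisted) quadratic reciprocity of the Kronecker symbol gives $\chi_2(\cir_1)=\chi_2(\cir_2)$; combined with the fact that any two circles of $\cpack$ are joined by a chain of circles with consecutively coprime curvatures (Corollary~\ref{cor:coprimepath}), $\chi_2$ is constant on $\cpack$, so $\chi_2(\cpack)$ makes sense. For the types $(6,1)$ and $(6,17)$ one refines this to a fourth-root-of-unity invariant $\chi_4$: following the ideal-theoretic picture of curvatures in \cite{KS18}, one attaches to $\cir$ a fractional $\ZZ[i]$-ideal and extracts a quartic residue symbol, so that $\chi_4(\cir)^2=\chi_2(\cir)$; constancy on $\cpack$ then follows from quartic reciprocity together with its supplementary laws at $1+i$ and at the units.

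Next I would prove the key forcing statement: if $\cpack$ contains a circle $\cir$ of curvature $n=um^d$ with $d\in\{2,4\}$, then $\chi_d(\cpack)$ is determined by $u$ alone (given the type). After absorbing $d$-th powers into $m$ we may take $u$ to be $d$-th-power-free, and since the mechanism only sees the primes $2$ and $3$ it can involve only $u\in\{1,2,3,6\}$ when $d=2$ and $u\in\{1,4,9,36\}$ when $d=4$. Choose a circle tangent to $\cir$ whose curvature $c$ is coprime to $n$ -- one exists because $f_\cir-n$ properly represents integers coprime to $n$. Since $\gcd(c,um^d)=1$, the factor $m^d$ drops out of $\kron{c}{n}$ (respectively its quartic analogue), which therefore collapses to a symbol depending only on $u$ and on $c$ modulo $24$; as $c$ is admissible modulo $24$ and coprime to $u$, Proposition~\ref{prop:admissibleresidues} forces this symbol to equal one fixed value $\epsilon_{u}$, depending only on $u$ and the type. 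Hence the existence of a single curvature in $S_{d,u}$ forces $\chi_d(\cpack)=\epsilon_u$; so whenever the extended type of $\cpack$ records a value of $\chi_d$ different from $\epsilon_u$, no element of $S_{d,u}$ appears. Since for each such $u$ one checks directly that infinitely many elements of $S_{d,u}$ are admissible modulo $24$, $S_{d,u}$ is then a genuine reciprocity obstruction in the sense of Definition~\ref{def:obstructiondef}.

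Finally I would assemble the table. This means running the previous paragraph over the finitely many pairs (extended type, candidate $u$): compute $\epsilon_u$ from Proposition~\ref{prop:admissibleresidues}, compare it with the recorded values of $\chi_2$ and $\chi_4$, and list those $S_{d,u}$ on which they disagree; this produces exactly the ``Quadratic Obstructions'' and ``Quartic Obstructions'' columns. The last two columns are then bookkeeping: the residues modulo $24$ on which Conjecture~\ref{conj:old-local-global} is proved false are precisely the admissible residues lying in the union of the obstructing families $S_{d,u}$ (obtained by squaring or fourth-powering residues modulo $24$ and multiplying by $u$), and the ``open'' column is the admissible set with those removed.

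The routine parts are the residue arithmetic modulo $24$ and the final tabulation. The main obstacle is turning $\chi_2$ and $\chi_4$ into honest invariants for the two eight-class types and for the quartic refinement: the naive Kronecker symbol is not reciprocity-stable there (adjacent curvatures may both be $\equiv 3 \pmod 4$, and even curvatures occur), so the $2$-adic correction twisting $\chi_2$ must be chosen to close up simultaneously along coprime chains and at even curvatures -- in particular one must verify that the forced value $\epsilon_u$ above is independent of $m$, including its $2$-adic valuation. For $\chi_4$ the analogous computation over $\ZZ[i]$ -- tracking the ramified prime $1+i$, the unit contributions, and the curvature/ideal dictionary of \cite{KS18} -- is the most delicate ingredient.
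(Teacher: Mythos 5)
Your proposal is correct and follows essentially the same route as the paper: the invariants $\chi_2$ (built from $f_\cir$ via Proposition~\ref{prop:singleton} with a $2$-adic twist) and $\chi_4$ (via the $\ZZ[i]$-lattices of \cite{KS18}), propagated along coprime tangency chains by quadratic and quartic reciprocity (Proposition~\ref{prop:1step}, Corollary~\ref{cor:coprimepath}), then forced to a fixed value by a hypothetical curvature $um^d$, and finally tabulated against the admissible residues. The only minor divergence is the forcing step: you evaluate the invariant at the circle of curvature $um^d$ itself using a neighbour coprime to $n$, whereas the paper evaluates $\chi_2$ at a neighbour whose curvature is coprime to $6um^2$ (Lemma~\ref{lem:coprimetangent}) and, for the $(8,k)$ types with $u=3$, additionally invokes Proposition~\ref{prop:aplusbrestriction}; your dual formulation (for the quartic case it is exactly the paper's) reproduces the same table.
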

 
\begin{aremark}[not in published version]
	The intersection of quadratic and quartic obstructions with a residue class can be described by adding a condition on $n$. For example, the obstruction $2n^2$ in type $(6, 17, -1)$ intersects the class $8\pmod{24}$ as $2(6n\pm 2)^2$, and the class $0\pmod{24}$ as $2(6n)^2$.
\end{aremark}

\begin{remark}
	We could consider the $\chi_4$ value for packings of types $(6, 1, -1)$ and $(6, 17, -1)$, but the quartic obstructions in these cases are subsumed in the quadratic ones.
\end{remark}
	
\begin{corollary}
The local-global conjecture \ref{conj:old-local-global} is false for at least one residue class in all primitive Apollonian circle packings that are not of type $(6, 1, 1, 1)$ or $(8, 11, 1)$.
\end{corollary}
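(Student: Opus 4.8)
The plan is to deduce the corollary directly from Theorem~\ref{thm:mainthm} by a finite case check over extended types. First I would recall that, by Proposition~\ref{prop:admissibleresidues}, every primitive Apollonian circle packing $\cpack$ has one of the six types $(6,1)$, $(6,5)$, $(6,13)$, $(6,17)$, $(8,7)$, $(8,11)$. I would then use the invariant $\chi_2(\cpack)\in\{\pm1\}$, which is defined for every packing, to refine each type into two sub-cases; and for types $(6,1)$ and $(6,17)$ I would further use $\chi_4(\cpack)$, which satisfies $\chi_4^2=\chi_2$, so that the case $\chi_2=1$ splits as $\chi_4=\pm1$ while the case $\chi_2=-1$ (where $\chi_4\in\{\pm i\}$ carries no extra information, by the Remark following Theorem~\ref{thm:mainthm}) is left unsplit. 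This produces exactly the $3+2+2+3+2+2=14$ extended types listed in the table of Theorem~\ref{thm:mainthm}, so the extended type of $\cpack$ is always one of those rows.

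Next I would go row by row. For every extended type other than $(6,1,1,1)$ and $(8,11,1)$, the ``Quadratic Obstructions'' or ``Quartic Obstructions'' column of the table exhibits a non-empty family $S_{d,u}=\{un^d:n\in\ZZ\}$ which, by Theorem~\ref{thm:mainthm}, is a reciprocity obstruction for $\cpack$. By Definition~\ref{def:obstructiondef}, infinitely many elements $un^d$ of $S_{d,u}$ are admissible modulo $24$ --- hence their residue classes lie in $R(\cpack)$ and are therefore realized by curvatures of $\cpack$ --- while no element of $S_{d,u}$ ever appears as a curvature. For $n$ large these values are distinct positive integers, so each is a missing curvature in the sense defined just before Conjecture~\ref{conj:old-local-global}; hence $\cpack$ has infinitely many missing curvatures and Conjecture~\ref{conj:old-local-global} is false for $\cpack$. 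More precisely it already fails for any residue class modulo $24$ meeting $S_{d,u}$ infinitely often, and the fourth column of the table records such a class (and is non-empty) for every one of these twelve extended types.

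The only work beyond Theorem~\ref{thm:mainthm} is the bookkeeping of the first step --- confirming that the fourteen tabulated extended types genuinely exhaust all possibilities, which reduces to the definitions of $\chi_2$ and $\chi_4$ together with the relation $\chi_4^2=\chi_2$ --- and the elementary observation that a reciprocity obstruction forces infinitely many missing curvatures. I do not expect either to be a genuine obstacle once Theorem~\ref{thm:mainthm} is in hand; the substance of the corollary is entirely contained in that theorem.
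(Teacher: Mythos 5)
Your proposal is correct and follows the same route as the paper, which states this corollary without a separate proof precisely because it is read off from the table of Theorem~\ref{thm:mainthm}: every extended type other than $(6,1,1,1)$ and $(8,11,1)$ has a non-empty obstruction column, and by Definition~\ref{def:obstructiondef} such an obstruction yields infinitely many missing curvatures in the admissible classes listed in the ``\ref{conj:old-local-global} false'' column. Your extra bookkeeping that the fourteen rows exhaust all extended types (via $\chi_2\in\{\pm1\}$, $\chi_4^2=\chi_2$, and the remark that $\chi_4$ adds nothing when $\chi_2=-1$) is exactly the implicit content of the paper's table, so nothing is missing.
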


The exceptions where the local-global conjecture may yet hold include the strip packing (root quadruple $(0, 0, 1, 1)$), and the bug-eye packing (root quadruple $(-1, 2, 2, 3)$).

The following corollary is the phenomenon which led to the discovery of quadratic obstructions.

\begin{corollary}
\label{cor:discovery}
	Curvatures $24m^2$ {\upshape (}necessarily $0\bmod{24}${\upshape )} and $8n^2$ with $3\nmid n$ {\upshape (}necessarily $8\bmod{24}${\upshape )} cannot appear in the same primitive Apollonian circle packing, despite $0\pmod{24}$ and $8\pmod{24}$ being simultaneously admissible in packings of type $(6, 5)$ or $(6, 17)$.
\end{corollary}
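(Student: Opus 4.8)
The plan is to deduce the statement from Proposition~\ref{prop:admissibleresidues} and Theorem~\ref{thm:mainthm}, after locating the two power families in the obstruction table. First I would observe that a packing $\cpack$ containing a curvature $\equiv 0$ and a curvature $\equiv 8 \pmod{24}$ must have type $(6,5)$ or $(6,17)$: these are the only two of the six admissible sets in Proposition~\ref{prop:admissibleresidues} that contain both $0$ and $8$. Next I would record the elementary identities $S_{2,6}\cap 24\ZZ=\{24m^2:m\in\ZZ\}$ and $S_{2,2}\cap(8+24\ZZ)=\{8n^2:3\nmid n\}$, both obtained by reducing $6j^2$ and $2j^2$ modulo $8$ and modulo $3$; these explain the precise shape of the statement and exhibit $24m^2\in S_{2,6}$ and the relevant $8n^2\in S_{2,2}$. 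Finally I would read off Theorem~\ref{thm:mainthm}: in each of the extended types $(6,5,1)$, $(6,5,-1)$, $(6,17,1,1)$, $(6,17,1,-1)$, $(6,17,-1)$ refining $(6,5)$ or $(6,17)$, exactly one of the families $S_{2,2}$ and $S_{2,6}$ occurs among the quadratic obstructions --- $S_{2,2}$ exactly when $\chi_2=1$ in type $(6,5)$ or $\chi_2=-1$ in type $(6,17)$, and $S_{2,6}$ in the complementary cases. So if some $\cpack$ contained both a curvature $24m^2$ and a curvature $8n^2$ with $3\nmid n$, it would be of type $(6,5)$ or $(6,17)$, one of $S_{2,2}$, $S_{2,6}$ would be a reciprocity obstruction for it, and then --- since $8n^2\in S_{2,2}$ and $24m^2\in S_{2,6}$ --- one of those two curvatures could not appear in $\cpack$, a contradiction. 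Since $0$ and $8$ are both admissible in these two types, this is exactly the corollary.

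A second, self-contained route --- and the one that actually motivated the paper --- bypasses the classification and argues directly with the invariant $\chi_2$, which by quadratic reciprocity of the Kronecker symbol is constant across $\cpack$ (as in the worked example above, via Corollary~\ref{cor:coprimepath}). It is enough to show that a circle of curvature $24m^2$ and a circle of curvature $8n^2$ with $3\nmid n$ would be forced to carry opposite values of $\chi_2$. For the first, one may choose a tangent circle of curvature $b$ coprime to $24m^2$ --- such a circle exists because the curvatures tangent to a given circle are precisely the values properly represented by the associated shifted form (Section~\ref{sec:quadratic-forms}), and that form takes values invertible mod $24m^2$ --- giving $\chi_2(\cpack)=\kron{b}{24m^2}=\kron{b}{24}$; similarly $\chi_2(\cpack)=\kron{b'}{8n^2}=\kron{b'}{8}$ for a tangent circle of curvature $b'$ coprime to $8n^2$. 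In a type $(6,5)$ (resp.\ $(6,17)$) packing the only admissible residue modulo $24$ that is coprime to $24$ is $5$ (resp.\ $17$), so $b\equiv b'\equiv 5$ (resp.\ $17$) modulo $24$; a one-line computation of the Kronecker symbols then gives $\kron{b}{24}=1$ versus $\kron{b'}{8}=-1$ in type $(6,5)$, and $\kron{b}{24}=-1$ versus $\kron{b'}{8}=1$ in type $(6,17)$, the desired contradiction in both cases (and a check that these values agree with the obstruction table).

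The step I expect to be the real obstacle --- and the reason this ``obvious'' corollary in fact needs the machinery of Section~\ref{sec:chi2def} --- is that both $24m^2$ and $8n^2$ are divisible by $8$, so this is exactly the regime where the naive identity $\chi_2(\cir)=\kron{\text{tangent curvature}}{\text{curvature}}$ fails and the definition of $\chi_2(\cir)$ has to be modified: when $8\mid n$ the invertible values of the shifted tangent form need no longer lie in a single square class modulo $n$, so the $2$-adic part must be tracked by hand (and it is there, together with the factor $3$, that the hypothesis $3\nmid n$ enters the bookkeeping). Once the correct even-curvature definition of $\chi_2$ from Section~\ref{sec:chi2def} is in place, both routes above go through; and it is quadratic reciprocity, with its supplementary laws at $-1$ and $2$, that promotes $\chi_2$ from an invariant of a single circle to an invariant of the whole packing --- which is precisely what makes the two families incompatible.
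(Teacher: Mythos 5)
Your first route is exactly how the corollary follows in the paper: it is a direct consequence of Theorem~\ref{thm:mainthm} (equivalently Proposition~\ref{prop:6robstructions}), since only types $(6,5)$ and $(6,17)$ admit both $0$ and $8\pmod{24}$, and every extended type refining these obstructs one of $S_{2,6}\ni 24m^2=6(2m)^2$ or $S_{2,2}\ni 8n^2=2(2n)^2$; your second route merely unpacks the same $\chi_2$ argument from the viewpoint of the other circle, and its computations check out. Two minor corrections: in the second route you should take $b'$ coprime to $6\cdot 8n^2$ (Lemma~\ref{lem:coprimetangent}) rather than just to $8n^2$ before asserting $b'\equiv 5,17\pmod{24}$ (harmless, since only $b'\bmod 8$ enters $\kron{b'}{8}$, and odd curvatures in these types are $5$ resp.\ $1\bmod 8$), and your closing diagnosis is off --- for curvatures $\equiv 0\pmod 4$, in particular multiples of $8$, Proposition~\ref{prop:singleton} holds verbatim and Definition~\ref{def:chi} uses the unmodified symbol $\kron{\rho}{n}$; the modifications are only needed when $n\equiv 2,3\pmod 4$.
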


\begin{remark}
Apollonian circle packings have been generalized in a variety of ways.  In \cite{KS18_Bi}, $K$-Apollonian packings were defined for each imaginary quadratic field $K$, where the $\QQ(i)$-Apollonian case is the subject of this paper. It is quite possible that quadratic obstructions occur in these packings, which share many features with the present case. The existence of quartic obstructions is less likely, as it arises because $K=\QQ(i)$.  The family of packings studied in \cite{FSZ19} are also governed by quadratic forms (this was the essential feature needed for the positive density results of that paper), and include the $K$-Apollonian packings; these are likely subject to quadratic obstructions as well.  It would also be interesting to ask the same question about an even wider class of packings studied by Kapovich and Kontorovich \cite{KK23}.
\end{remark}
	
In Section \ref{sec:background}, we cover background. Quadratic obstructions are addressed in Section \ref{sec:quadraticobstructions}, and quartic obstructions in Section \ref{sec:quarticbstructions}. Computational evidence to support Conjecture \ref{conj:newlocalglobal} is found in Section \ref{sec:computations}.

\section{Residue classes and quadratic forms}\label{sec:background}

Consider a Descartes quadruple $(a, b, c, d)$ contained in an Apollonian circle packing. The act of swapping the $i$\textsuperscript{th} circle to obtain the other solution described by Apollonius is called a \emph{move}, and is denoted $S_i$. In terms of the quadruples, $S_1$ corresponds to
\[S_1:(a, b, c, d)\rightarrow (2b+2c+2d-a, b, c, d).\]
Analogous equations for $S_2$ to $S_4$ hold. It is possible to move between every pair of Descartes quadruples in a fixed circle packing via a finite sequence of these moves (up to a permutation of the entries of the quadruples).  The classical Apollonian group is generated by these four moves as transformations in the orthogonal group preserving the Descartes form (which has signature $(3,1)$).

\subsection{Residue classes}

In \cite{GLMWY02}, Graham-Lagarias-Mallows-Wilks-Yan determined the list of possible residues modulo 12 in $\cpack$. In her Ph.D. thesis \cite{Fuchs10}, Fuchs proved that there are in fact restrictions modulo $24$ and that this is the ``best possible.'' The complete list of obstructions modulo $24$ are found in Proposition \ref{prop:admissibleresidues}, which is proven in this section.

\begin{proposition}\label{prop:aplusbrestriction}
	Consider tangent circles in $\cpack$ with curvatures $a, b$. Then $a+b\not\equiv 3, 6, 7\pmod{8}$. In particular, if $\cpack$ has type $(8, k)$ and $a,b$ are odd, then one is $3\pmod{8}$ and the other is $7\pmod{8}$.
\end{proposition}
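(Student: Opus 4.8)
The plan is to reduce the statement to one about primitive Descartes quadruples, and then extract congruence information modulo $8$ from the Descartes equation, with a single nontrivial input coming from primitivity.

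First I would reduce to quadruples. Any two tangent circles of $\cpack$ lie in a common Descartes quadruple $(a,b,c,d)$, and this quadruple is primitive, since the $\gcd$ of a quadruple is invariant under the moves $S_i$ (and under permutations), and every quadruple of $\cpack$ is obtained from the root quadruple this way. As all four circles of a quadruple are mutually tangent, it suffices to show that no two entries of a primitive Descartes quadruple sum to $3$, $6$, or $7$ modulo $8$. Next I would record the parity structure: from the Descartes equation $a+b+c+d$ is even and $a^{2}+b^{2}+c^{2}+d^{2}=2k^{2}$ with $k=(a+b+c+d)/2$, so an even number of entries are odd; all four even is impossible by primitivity, and all four odd forces $k^{2}\equiv 2\pmod 4$ (a sum of four odd squares is $\equiv 4\pmod 8$), which is impossible. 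Hence exactly two entries, say $a,b$, are odd and $c,d$ are even, and $2k^{2}\equiv 2\pmod 4$ shows $k$ is odd.

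The core is a clean identity. Writing the Descartes equation as $\sum_{i<j}a_{i}a_{j}=k^{2}$ and reducing modulo $4$ gives $ab\equiv k^{2}\equiv 1\pmod 4$, since the remaining terms $cd$ and $(a+b)(c+d)$ are divisible by $4$; hence $a\equiv b\pmod 4$, and we may write $a=m-2\alpha$, $b=m+2\alpha$ with $m=(a+b)/2$ odd. Substituting this back, together with $cd=\bigl((c+d)^{2}-(c-d)^{2}\bigr)/4$, I would obtain $(c-d)^{2}=8mn-16\alpha^{2}$ where $n=(c+d)/2$. This shows $8\mid (c-d)^{2}$, hence $4\mid c-d$, so $c\equiv d\pmod 4$ and $n=2n'$ is even; writing $d-c=4\gamma$, the identity collapses to
\[ \alpha^{2}+\gamma^{2}=mn', \qquad m \text{ odd}, \]
with $a,b=m\mp 2\alpha$ and $c,d=2n'\mp 2\gamma$. (One also checks $m>0$, using $\alpha^{2}+\gamma^{2}\ge 0$ and $a+b+c+d=2m+4n'>0$.)

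The hard part is showing $m\equiv 1\pmod 4$, and here is where primitivity enters. Suppose instead $m\equiv 3\pmod 4$; since $m>0$ it has a prime factor $\ell\equiv 3\pmod 4$ with $v_{\ell}(m)$ odd. From $\ell\mid m\mid\alpha^{2}+\gamma^{2}$ and $\ell\equiv 3\pmod 4$ we get $\ell\mid\alpha$ and $\ell\mid\gamma$, so $v_{\ell}(\alpha^{2}+\gamma^{2})$ is even, forcing $v_{\ell}(n')=v_{\ell}(\alpha^{2}+\gamma^{2})-v_{\ell}(m)$ to be odd; in particular $\ell\mid n'$. But then $\ell$ divides $a+b=2m$ and $b-a=4\alpha$, hence $a$ and $b$, and divides $c+d=4n'$ and $d-c=4\gamma$, hence $c$ and $d$ — contradicting primitivity. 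Therefore $m\equiv 1\pmod 4$, i.e.\ $a+b=2m\equiv 2\pmod 8$. It remains to read off pair sums: $a+b\equiv 2\pmod 8$; $c+d=4n'\equiv 0\pmod 4$; and for a mixed pair, reducing $\alpha^{2}+\gamma^{2}=mn'$ modulo $2$ gives $\alpha+\gamma\equiv n'\pmod 2$, so (using $m\equiv 1\pmod 4$) each of $a+c,a+d,b+c,b+d$ is $\equiv m\equiv 1\pmod 4$. In every case the sum is $\not\equiv 3,6,7\pmod 8$. Finally, since the two odd entries of any quadruple are congruent modulo $4$, all odd curvatures of $\cpack$ lie in one class modulo $4$, and type $(8,k)$ is exactly the case where that class is $3\pmod 4$ (matching Proposition~\ref{prop:admissibleresidues}); then $a\equiv b\equiv 3\pmod 4$ together with $a+b\equiv 2\pmod 8$ forces one of $a,b$ to be $3\pmod 8$ and the other $7\pmod 8$.
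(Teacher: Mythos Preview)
Your proof is correct. The core mechanism is the same as the paper's: a prime $\ell\equiv 3\pmod 4$ dividing a sum of two squares must divide each summand, and tracking valuations then forces $\ell$ to divide all four curvatures, contradicting primitivity.

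The route, however, is noticeably different. The paper works directly with the identity
\[
(a-b)^2+(c-d)^2=2(a+b)(c+d),
\]
and applies the sum-of-two-squares argument to the pair $(a,b)$ chosen in the statement, with no need to know which entries of the quadruple are odd and no case analysis on the type of pair. You instead first pin down the parity pattern (exactly two odd, two even), parametrize $a,b,c,d$ via $m,\alpha,n',\gamma$, reduce to $\alpha^2+\gamma^2=mn'$, prove $m\equiv 1\pmod 4$, and then separately check the odd--odd, even--even, and mixed pair sums. This is more work, but it yields extra information for free: you recover not just the exclusion of $3,6,7\pmod 8$ but the exact congruences $a+b\equiv 2\pmod 8$, $c+d\equiv 0\pmod 4$, $c\equiv d\pmod 4$, and mixed sums $\equiv 1\pmod 4$, which the paper only obtains later in Lemma~\ref{lem:mod8}. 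The paper's argument is shorter and pair-symmetric; yours is a structural classification that happens to imply the proposition along the way.
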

\begin{proof}
	Assume otherwise. Then the odd part of $a+b$ is $3\pmod{4}$,  so there exists a prime $p\equiv 3\pmod{4}$ with $v_p(a+b)$ odd, where $v_p$ represents the $p$-adic valuation. Rearranging the Descartes equation gives
	\[(a-b)^2+(c-d)^2=2(a+b)(c+d).\]
	Since the left hand side is a sum of two squares that is a multiple of $p\equiv 3\pmod{4}$, it follows that $p\mid a-b, c-d$, and $v_p(\text{LHS})$ is even. Therefore $v_p(c+d)$ is odd, hence $p\mid c+d$ as well. Thus $p\mid a, b, c, d$, so the quadruple is not primitive, a contradiction.
    The final part follows immediately.
\end{proof}

Rather than work modulo 24, we consider modulo 3 and 8 separately.

\begin{alemma}[not in published version]\label{lem:mod3}
	The set of Descartes quadruples in $\cpack$ taken modulo $3$ is one of the following two sets:
	\begin{enumerate}[label=\upshape{(\alph*)}]
		\item $\{\text{\upshape{all permutations of }}(0, 0, 1, 1) \text{\upshape{ and }} (0, 1, 1, 1)\}$;
		\item $\{\text{\upshape{all permutations of }}(0, 0, 2, 2) \text{\upshape{ and }} (0, 2, 2, 2)\}$.
	\end{enumerate}
\end{alemma}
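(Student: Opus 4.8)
The plan is to analyze the Descartes equation modulo $3$ directly, exploiting the fact that $2 \equiv -1 \pmod 3$, so the Descartes equation $(a+b+c+d)^2 = 2(a^2+b^2+c^2+d^2)$ becomes $(a+b+c+d)^2 \equiv -(a^2+b^2+c^2+d^2) \pmod 3$. Since squares mod $3$ are $0$ or $1$, this forces a strong constraint on how many of $a,b,c,d$ are divisible by $3$. First I would enumerate: if $k$ of the four entries are $\equiv 0 \pmod 3$, then the right side is $-(4-k) \equiv k-1 \pmod 3$, while the left side, being a square, is $0$ or $1$. Checking $k = 0,1,2,3,4$: $k=0$ gives RHS $\equiv 2$, impossible; $k=4$ gives RHS $\equiv 0$ but then the quadruple is non-primitive, contradiction; $k=1$ gives RHS $\equiv 0$, so $a+b+c+d \equiv 0 \pmod 3$, which with one zero entry means the other three sum to $0$ mod $3$, hence are all $1$ or all $2$ (since $1+1+1 \equiv 0$, $2+2+2\equiv 0$, but $1+1+2 \equiv 1$ and $1+2+2\equiv 2$); $k=2$ gives RHS $\equiv 1$, so the sum is $\pm 1$, and with two zeros the remaining two entries sum to $\pm 1$, forcing them to be equal (both $1$: sum $2 \equiv -1$, a square; both $2$: sum $1$, a square; one of each: sum $0$, not allowed). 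And $k=3$ gives RHS $\equiv 2$, impossible. So the only quadruples mod $3$ (up to permutation) are $(0,1,1,1)$, $(0,0,1,1)$, $(0,2,2,2)$, $(0,0,2,2)$.

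Next I would show that within a fixed packing, one cannot mix the "$1$-flavored" quadruples with the "$2$-flavored" ones: the set of quadruples mod $3$ is either entirely of the form in (a) or entirely of the form in (b). This follows by checking that the moves $S_i$ preserve the flavor. Concretely, applying $S_1$ to $(0,1,1,1)$ gives $(2\cdot 3 - 0, 1,1,1) \equiv (0,1,1,1)$, and applying $S_1$ to $(0,0,1,1)$ gives $(2(0+1+1) - 0, 0, 1, 1) \equiv (1, 0, 1, 1)$, again a permutation of $(0,0,1,1)$; the analogous computation works for swapping a nonzero entry, e.g. $S_2$ on $(0,0,1,1)$ sends the second coordinate to $2(0+1+1) \equiv 1$, giving $(0,1,1,1)$, which stays in family (a). One checks all cases — swap of a zero entry or a nonzero entry, in each of the two quadruple shapes — and sees the result always lands in the same family. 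Since every Descartes quadruple in $\cpack$ is reachable from any other by a sequence of moves and permutations (as recalled in Section~\ref{sec:background}), and since multiplying all entries by $2$ (i.e. by $-1 \bmod 3$) is a bijection between families (a) and (b), the set of quadruples mod $3$ is exactly one of these two families, completing the proof.

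The step I expect to be the main (though still modest) obstacle is the second one: verifying move-invariance cleanly. One must handle the move $S_i$ both when the swapped entry is $\equiv 0$ and when it is nonzero, and in both quadruple shapes, and also confirm that permutations (which the move description allows) don't take us out of the family — the latter is immediate since both families are defined by sets closed under permutation. A minor subtlety is making sure we've used primitivity correctly to discard $k=4$ (and to rule out the degenerate possibility that some move could produce, say, $(0,0,0,0)$ mod $3$ — but the enumeration above already shows the only all-zero option is non-primitive, so it never arises from a primitive quadruple). No deep input is needed beyond the Descartes equation, the classification of squares mod $3$, primitivity, and transitivity of the move action within a packing.
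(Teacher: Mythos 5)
Your proposal is correct and takes essentially the same route as the paper: a case analysis on how many of the four entries are divisible by $3$ in the Descartes equation modulo $3$, followed by checking that the moves $S_1,\dots,S_4$ (and permutations) preserve the resulting class. The only slip is cosmetic: $S_1$ applied to $(0,0,1,1)$ yields $(1,0,1,1)$, which is a permutation of $(0,1,1,1)$ rather than of $(0,0,1,1)$, but either way it stays in family (a), so the argument is unaffected.
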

\begin{proof}
	By dividing into cases based on the number of curvatures that are multiples of 3, a straightforward computation shows the claimed sets are the only solutions to Descartes's equation modulo 3. By considering the moves $S_1$ to $S_4$, we see that they fall into the two classes.
\end{proof}

\begin{alemma}[not in published version]\label{lem:mod8}
	The set of Descartes quadruples in $\cpack$ taken modulo $8$ is one of the following three sets:
	\begin{enumerate}[label=\upshape{(\alph*)}]
		\item $\{\text{\upshape{all permutations of }}(0, 0, 1, 1), (0, 4, 1, 1)\text{\upshape{ and }} (4, 4, 1, 1)\}$;
		\item $\{\text{\upshape{all permutations of }}(0, 0, 5, 5), (0, 4, 5, 5),\text{\upshape{ and }} (4, 4, 5, 5)\}$;
		\item $\{\text{\upshape{all permutations of }}(2, 2, 3, 7), (2, 6, 3, 7),\text{\upshape{ and }} (6, 6, 3, 7)\}$.
	\end{enumerate}
\end{alemma}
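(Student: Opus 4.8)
The plan is to follow the template of Lemma~\ref{lem:mod3}: first pin down, using the Descartes equation together with primitivity and Proposition~\ref{prop:aplusbrestriction}, the possible reductions modulo $8$ of a single primitive Descartes quadruple, and then use the moves $S_1,\dots,S_4$ to organize these reductions into orbits.

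For the first part, set $s=a+b+c+d$. The Descartes equation forces $s$ even, and writing $s=2t$ gives $2t^2=a^2+b^2+c^2+d^2$, so an even number of the curvatures are odd. Zero odd curvatures contradicts primitivity, and four odd curvatures would give $a^2+b^2+c^2+d^2\equiv 4\pmod 8$, hence $t^2\equiv 2\pmod 4$, which is impossible. So exactly two curvatures are odd; after permuting, say $a,b$ are even and $c,d$ odd. Reducing $2t^2=a^2+b^2+c^2+d^2$ modulo $8$ gives $2t^2\equiv (a^2+b^2)+2\pmod 8$ with $a^2+b^2\in\{0,4\}\pmod 8$; since $2t^2\not\equiv 6\pmod 8$, we must have $a^2+b^2\equiv 0\pmod 8$ and $t$ odd. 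The former forces $a\equiv b\pmod 4$, and since $t=\tfrac{a+b}{2}+\tfrac{c+d}{2}$ with $\tfrac{a+b}{2}$ even, $t$ odd forces $\tfrac{c+d}{2}$ odd, i.e.\ $c\equiv d\pmod 4$. Thus modulo $8$ the even pair is one of $\{0,0\},\{0,4\},\{4,4\}$ or one of $\{2,2\},\{2,6\},\{6,6\}$, and the odd pair lies in $\{1,1\},\{1,5\},\{5,5\}$ or $\{3,3\},\{3,7\},\{7,7\}$.

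Next I would feed this into Proposition~\ref{prop:aplusbrestriction}, applied to each pair of (tangent) circles in the quadruple. Applied to the odd pair, $c+d\not\equiv 6\pmod 8$ eliminates $\{1,5\},\{3,3\},\{7,7\}$, leaving odd pair $\{1,1\},\{5,5\}$, or $\{3,7\}$. Applied to a mixed pair, one checks that an even residue in $\{0,4\}$ added to either $3$ or $7$ is always $\equiv 3$ or $7\pmod 8$, and an even residue in $\{2,6\}$ added to $1$ or $5$ is likewise always $\equiv 3$ or $7\pmod 8$; both are forbidden. Hence odd pair $\{3,7\}$ forces the even pair into $\{2,6\}$, while odd pair $\{1,1\}$ or $\{5,5\}$ forces it into $\{0,4\}$; the even--even and remaining mixed pairs then impose no further restriction, leaving exactly the multisets in (a), (b), (c).

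Finally I would record that a short computation with $S_1,\dots,S_4$ shows each of the three sets is closed under the moves and that the moves act transitively on the three multisets inside each set: replacing an odd entry leaves its residue modulo $8$ unchanged (because $2\cdot(\text{even in }\{0,4\})\equiv 0$ and $2\cdot(\text{even in }\{2,6\})\equiv 4\pmod 8$, so the doubled odd entry and the value being replaced cancel to leave the residue fixed), while replacing an even entry toggles $0\leftrightarrow 4$ (resp.\ $2\leftrightarrow 6$). Since the Descartes quadruples of $\cpack$ form a single orbit under moves and permutations (as recalled above), the three sets are pairwise disjoint, and a move can always be performed, the set of reductions modulo $8$ of quadruples in $\cpack$ is exactly one of (a), (b), (c). The one point requiring genuine care is the mixed-pair use of Proposition~\ref{prop:aplusbrestriction}, which is what couples the even pair to the odd pair; without it one would instead have to bring in a congruence modulo $16$. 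The rest is routine, as in Lemma~\ref{lem:mod3}.
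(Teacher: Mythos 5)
Your proof is correct and takes essentially the same route as the paper's: the same parity count giving two odd and two even entries, Proposition~\ref{prop:aplusbrestriction} combined with the Descartes equation modulo $8$ to force the residues into the three listed sets, and the moves $S_1,\dots,S_4$ (whose mod-$8$ action you verify explicitly: odd entries fixed, even entries toggled within $\{0,4\}$ or $\{2,6\}$) to see that each set forms a single class. The paper's version is merely terser, casing on $c \bmod 4$ and leaving the move computation as a routine check, so your write-up just fills in the same details.
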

\begin{proof}
	Let $(a, b, c, d)$ be a Descartes quadruple in $\cpack$. Considering the Descartes equation modulo 2, there is an even count of odd numbers amongst $a, b, c, d$. This cannot be zero (due to primitivity), and it cannot be four, as otherwise $(a+b+c+d)^2\equiv 8\pmod{16}$. Therefore, there are always two odd and two even curvatures, so without loss of generality, assume that $c, d$ are odd and $a, b$ are even.
	
	Assume $c\equiv 1\pmod{4}$. By Proposition \ref{prop:aplusbrestriction}, $a\equiv b\equiv 0\pmod{4}$. In turn, this implies $d\equiv 1\pmod{4}$ and $d\equiv c\pmod{8}$. This gives the quadruples listed in (a) and (b), and by applying $S_1$ through $S_4$ and permutations, we see them fall into the two classes.
	
	Otherwise, $c\equiv 3\pmod{4}$, which analogously implies $a\equiv b\equiv 2\pmod{4}$, $d\equiv 3\pmod{4}$, and $c\not\equiv d\pmod{8}$. This gives the quadruples in (c), and again, the moves $S_1$ to $S_4$ and permutations show that they form one class.
\end{proof}

\begin{aremark}[not in published version]
    A finer version of this question is to fix a Descartes quadruple $(a, b, c, d)$ and ask which quadruples modulo $n$ are obtainable from a sequence of the moves $S_1$ to $S_4$ (i.e. ignoring permutations). Lemma~\ref{lem:mod3} remains valid, but Lemma~\ref{lem:mod8} changes slightly. Each of the modulo 8 sets partitions into the six subsets where the even and odd curvatures remain in fixed places.
\end{aremark}

A consequence of Lemmas \ref{lem:mod3} and \ref{lem:mod8} is that
\begin{itemize}
	\item $R(\cpack)\pmod{3}=$ $\{0, 1\}$ or $\{0, 2\}$;
	\item $R(\cpack)\pmod{8}=$ $\{0, 1, 4\}$ or $\{0, 4, 5\}$ or $\{2, 3, 6, 7\}$.
\end{itemize}

The Chinese remainder theorem gives six ways to combine these into a congruence set modulo 24, resulting in the sets listed in Proposition \ref{prop:admissibleresidues}. For each of the six sets, there do exist primitive packings with those admissible sets. It remains to show that the Chinese remainder theorem holds for curvatures.

\begin{alemma}[not in published version]
	Let $\cpack$ contain a curvature equivalent to $r_1\pmod{3}$ and another curvature equivalent to $r_2\pmod{8}$. Then there exists a curvature in $\cpack$ that is simultaneously equivalent to $r_1\pmod{3}$ and $r_2\pmod{8}$.
\end{alemma}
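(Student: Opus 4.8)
The plan is to prove the formally stronger statement that the set of Descartes quadruples of $\cpack$, reduced modulo $24$, surjects under the two projections onto the product of its reduction modulo $3$ and its reduction modulo $8$. Since by Lemmas~\ref{lem:mod3} and~\ref{lem:mod8} these reductions are the explicit finite sets $O_3$ and $O_8$ listed there, and since the set of curvatures of $\cpack$ is insensitive to permuting the entries of a quadruple, the Chinese remainder statement then follows: pick a quadruple $\bar Q_3\in O_3$ with first entry $\equiv r_1\pmod 3$ and a quadruple $\bar Q_8\in O_8$ with first entry $\equiv r_2\pmod 8$ (possible by inspecting the two lists), take a quadruple $Q$ of $\cpack$ that is $\equiv\bar Q_3\pmod 3$ and $\equiv\bar Q_8\pmod 8$, and observe that the first entry of $Q$ is a curvature $\equiv r_1\pmod 3$ and $\equiv r_2\pmod 8$. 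The reverse inclusion, that the residue modulo $24$ of every curvature is constrained by the mod-$3$ and mod-$8$ pictures, is immediate.

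So the real content is an independence statement for the moves: one can adjust a quadruple's class modulo $3$ without changing its class modulo $8$, and conversely. I would establish this by producing explicit words $w$ in $S_1,\dots,S_4$ and coordinate permutations whose reduction acts trivially on the finite set $O_8$ but transitively on $O_3$ (and the mirror statement with $3$ and $8$ interchanged). Given such $w$, the product statement follows by a two-step argument: from an arbitrary quadruple of $\cpack$, first apply moves and permutations (transitive on $O_8$ by Lemma~\ref{lem:mod8}) to land in the desired mod-$8$ class, then apply the words $w$ to correct the mod-$3$ class without disturbing mod $8$. Finding the words is a finite computation with small objects: the relevant states are the $10$ quadruples of $O_3$ and the $24$ quadruples of $O_8$, and modulo any $n$ the move $S_i$ is the affine involution sending $x_i$ to $2\sum_{j\ne i}x_j-x_i$; one computes the finite permutation groups $G_3$ and $G_8$ (subgroups of the symmetric groups on $O_3$ and $O_8$) generated by these together with the permutations, and the combined image $G_{24}\le G_3\times G_8$, and reads off suitable $w$ — equivalently, one directly enumerates the mod-$24$ orbit of a root quadruple and checks that it has the full size $|O_3|\cdot|O_8|=240$.

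The crux, and the only place the argument could fail, is exactly this independence: a priori $G_{24}$ could be a proper subdirect product of $G_3\times G_8$ (Goursat's lemma), linking the two reductions so that $24$ would fail to be the sharp modulus; the finite computation above is what rules this out. A softer alternative becomes available once the binary quadratic form description of tangent curvatures is in hand (Section~\ref{sec:quadratic-forms}): the curvatures of the circles tangent to a fixed circle are the coprime values $g(x,y)$ of a binary quadratic form $g$, and since the argument pair $(x,y)$ may be prescribed independently modulo $3$ and modulo $8$ by the Chinese remainder theorem and then adjusted to be globally coprime, the residues modulo $24$ of those curvatures form exactly the product of their residues modulo $3$ and modulo $8$; one then only checks, again from Lemmas~\ref{lem:mod3} and~\ref{lem:mod8}, that a single tangent family already realizes every residue of $O_3$ in one coordinate and every residue of $O_8$ in one coordinate.
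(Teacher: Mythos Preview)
Your proposal is correct and is precisely the ``direct computation'' the paper invokes (the paper's own proof is a one-line citation to Fuchs's thesis plus the remark that a direct computation works). Your Goursat framing and the explicit check that the mod-$24$ orbit has size $|O_3|\cdot|O_8|=240$ faithfully unpack that remark; the quadratic-form alternative is also sound but is more than the paper needs here.
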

\begin{proof}
	This is a special case of Lemma 4.4 of \cite{Fuchs10}, and can also be proven by direct computation.
\end{proof}

Interestingly, not all solutions to the Descartes equation modulo $24$ lift to solutions in $\ZZ$; the sum-of-squares argument in Proposition~\ref{prop:aplusbrestriction}  (essentially, an effect of quadratic reciprocity) rules some out. For example, $(0, 0, 1, 13)$ is a solution modulo 24 that does not lift since $1+13\equiv 6\pmod{8}$.

\subsection{Quadratic forms}
\label{sec:quadratic-forms}

See the books by Buell \cite{BU89} or Cohen \cite{Cohen93} for a longer exposition on quadratic forms.

\begin{adefinition}[not in published version]
	A primitive integral positive definite binary quadratic form is a function of the form $f(x, y)=Ax^2+Bxy+Cy^2$, where $A, B, C\in\ZZ$, $\gcd(A, B, C)=1$, $A>0$, and $D:=B^2-4AC<0$. Call $D$ the discriminant of $f$. The group $\PGL(2, \ZZ)$ acts on the set of forms as follows:
	\[\lm{a}{b}{c}{d}\circ f:=f(ax+by, cx+d).\]
	This action preserves the discriminant, and divides the set of forms of a fixed discriminant into a finite number of equivalence classes (the natural group structure obtained by taking $\PSL(2, \ZZ)$ equivalence classes does not extend to the $\PGL(2, \ZZ)$ equivalence classes).
\end{adefinition}

The connection between Descartes quadruples and quadratic forms dates to \cite{GLMWY02} and \cite{Sar07}. Fixing a circle $\cir$, one can associate to it a certain quadratic form $f_\cir$.  Then, in particular, the circles tangent to $\cir$ have curvatures properly represented by a shift of $f_\cir$.

\begin{proposition}[{\cite[Theorem 4.2]{GLMWY02}}]\label{prop:quadformbij}
	There is a bijection between primitive integral positive definite binary quadratic forms of discriminant $-4a^2$ and primitive Descartes quadruples containing $a$ as the first curvature. The map from quadruple to form is
    \[
    (a,b,c,d) \mapsto (a+b)x^2+(a+b+c-d)xy+(a+c)y^2,
    \]
    and the map from form to quadruple is
    \[
    Ax^2 + Bxy + Cy^2 \mapsto (a, A-a, C-a, A+C-B-a).
    \]
\end{proposition}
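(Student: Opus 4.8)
The plan is to verify directly that the two displayed maps are mutually inverse bijections, and that they carry the "primitive positive definite binary quadratic form of discriminant $-4a^2$" condition to the "primitive Descartes quadruple with first curvature $a$" condition. First I would check the arithmetic identities: starting from a quadruple $(a,b,c,d)$, form $f = Ax^2+Bxy+Cy^2$ with $A=a+b$, $B=a+b+c-d$, $C=a+c$, and then compute $(A-a, C-a, A+C-B-a) = (b, c, (a+b)+(a+c)-(a+b+c-d)-a) = (b,c,d)$, confirming one composition is the identity; the other composition is an equally short computation. Then I would compute the discriminant $B^2-4AC$ and show it equals $-4a^2$: expand $B^2 - 4AC = (a+b+c-d)^2 - 4(a+b)(a+c)$ and check, using the Descartes equation $(a+b+c+d)^2 = 2(a^2+b^2+c^2+d^2)$, that this simplifies to $-4a^2$. (Concretely, $(a+b+c-d)^2 - (a+b+c+d)^2 = -4d(a+b+c)$, and combining with the Descartes relation expressed as $2(a^2+b^2+c^2+d^2) - 4(a+b)(a+c) = \cdots$ collapses everything to $-4a^2$; this is the one genuinely computational step.)

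Next I would match up the positivity and primitivity conditions. For positive definiteness with negative discriminant it suffices that $A = a+b > 0$; since $a$ is a curvature in the packing (so the packing is bounded or we are in a suitable normalization) and $b$ is the curvature of a tangent circle, I would argue $a+b>0$ from the geometry of tangent circles in a primitive packing, or cite that this is part of the correspondence in \cite[Theorem 4.2]{GLMWY02}. For primitivity I must show $\gcd(a,b,c,d)=1$ if and only if $\gcd(A,B,C)=1$: one direction is immediate since $A,B,C$ are integer linear combinations of $a,b,c,d$; the reverse follows because $a,b,c,d$ are recovered as integer linear combinations of $A,B,C$ (and $a$, but $a = A-b$ already lies in the span), so a common prime divisor of $A,B,C$ would divide all of $a,b,c,d$.

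Finally I would record that the map is well-defined in both directions — i.e., that the image of a Descartes quadruple actually satisfies the form axioms (integrality is clear, $A>0$ and primitivity as above, discriminant $-4a^2<0$ forcing positive definiteness once $A>0$), and conversely that given a primitive positive definite form of discriminant $-4a^2$ the quadruple $(a, A-a, C-a, A+C-B-a)$ satisfies the Descartes equation (this is forced by running the discriminant computation in reverse) and is primitive. The main obstacle is purely bookkeeping: carrying out the discriminant identity cleanly and making sure the $a+b>0$ (equivalently $A>0$) normalization is handled consistently with the sign conventions for curvatures; there is no conceptual difficulty, and indeed the statement is quoted verbatim from \cite{GLMWY02}, so the "proof" is really a verification that the cited formulas are as stated.
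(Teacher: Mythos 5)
The paper itself gives no proof of this proposition: it is quoted directly from \cite[Theorem 4.2]{GLMWY02}, so there is no internal argument to compare against, and your plan of verifying the two formulas directly is a reasonable route. Your core computations are correct: the two maps are mutually inverse (your check of $A-a=b$, $C-a=c$, $A+C-B-a=d$ and the reverse composition is right), and the identity
\[
B^2-4AC+4a^2 \;=\; 2(a^2+b^2+c^2+d^2)-(a+b+c+d)^2
\]
with $A=a+b$, $B=a+b+c-d$, $C=a+c$ shows that the discriminant condition $B^2-4AC=-4a^2$ is \emph{equivalent} to the Descartes equation, which handles well-definedness in both directions at once.

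There is, however, a genuine gap in your primitivity argument. You claim the direction ``$\gcd(A,B,C)=1$ forces $\gcd(a,b,c,d)=1$'' is the nontrivial one and justify the other by saying $a,b,c,d$ are integer linear combinations of $A,B,C$, with $a$ ``already in the span.'' That is backwards and the span claim is false: the quadruple entries are $b=A-a$, $c=C-a$, $d=A+C-B-a$, so recovering them requires $a$, and $a$ is \emph{not} an integer combination of $A,B,C$ (the form only determines $a^2$ via the discriminant). Since $\gcd(a,A-a,C-a,A+C-B-a)=\gcd(a,A,B,C)\le\gcd(A,B,C)$, the implication ``form primitive $\Rightarrow$ quadruple primitive'' is the trivial one; the direction that needs work is ``quadruple primitive $\Rightarrow$ form primitive,'' and your linear-algebra argument does not give it. The correct argument uses the discriminant: if an odd prime $p$ divides $A,B,C$, then $p^2\mid B^2-4AC=-4a^2$, so $p\mid a$ and hence $p$ divides all of $a,b,c,d$; for $p=2$, note $2\mid A,B,C$ forces $(B/2)^2+a^2\equiv 0\pmod 4$, hence $2\mid a$ (equivalently, it contradicts the fact that a primitive Descartes quadruple has exactly two odd entries). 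Separately, you should not outsource the positivity $A=a+b>0$ to geometry or to the citation: it follows algebraically from the rearranged Descartes identity $(a-b)^2+(c-d)^2=2(a+b)(c+d)$ (the same identity used in the paper's Proposition~\ref{prop:aplusbrestriction}) together with $a+b+c+d>0$ and $a\neq 0$; the same identity also shows, in the reverse direction, that $A>0$ and the Descartes equation force $c+d\ge 0$ and hence $a+b+c+d>0$, a condition your proposal never checks.
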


Given a circle $\cir$ of curvature $a$ in a primitive Apollonian circle packing, we can associate a Descartes quadruple $(a, b, c, d)$ to $\cir$, and therefore a quadratic form. The ambiguity in choosing the Descartes quadruple exactly corresponds to taking a $\PGL(2, \ZZ)-$equivalence class of quadratic forms. See \cite{GLMWY02} and \cite[Proposition 3.1.3]{JR23stairs}.

\begin{definition}
	Given $\cir\in\cpack$ a circle of curvature $n$, define $f_{\cir}$ to be a quadratic form of discriminant $-4n^2$ that corresponds to $\cir$ via Proposition \ref{prop:quadformbij}.
\end{definition}

For the rest of the paper we will assume that $n\neq 0$ for convenience. The results should still hold for $n=0$, but as this only corresponds to the strip packing $(0, 0, 1, 1)$, it will be of no use here.

In \cite{Sar07}, Sarnak made a crucial observation relating curvatures of circles tangent to $\cir$ and properly represented values of $f_{\cir}$, which is a key tool for the rest of the paper.

\begin{proposition}
	Let $\cir$ be a circle of curvature $n$ in $\cpack$.
	The multiset of curvatures of circles tangent to $\cir$ in $\cpack$ is $\{ f_{\cir}(x, y)-n: \gcd(x,y) = 1 \}$.
\end{proposition}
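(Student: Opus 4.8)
The plan is to establish the stated description of the tangent curvatures as a consequence of the bijection in Proposition~\ref{prop:quadformbij} together with the local (and global) structure of Descartes quadruples under the Apollonian moves. Fix a circle $\cir$ of curvature $n\neq 0$, and fix a Descartes quadruple $(a,b,c,d)$ with $a=n$ that realizes $\cir$; by Proposition~\ref{prop:quadformbij} we may take $f_\cir(x,y)=(a+b)x^2+(a+b+c-d)xy+(a+c)y^2$ of discriminant $-4n^2$. The key observation is that the circles tangent to $\cir$ in $\cpack$ are exactly the circles appearing as the ``fourth'' circle in some Descartes quadruple of the form $(n,*,*,*)$ inside $\cpack$ (together with the three circles $b,c,d$ of the chosen quadruple, which are tangent to $\cir$ as well). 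So the problem reduces to identifying which quadruples $(n, b', c', d')$ lie in $\cpack$, and then reading off the curvature $b'$ (say) of the new tangent circle.

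The main steps I would carry out are as follows. First, I would recall that moving among the Descartes quadruples containing $\cir$ — i.e. keeping the first coordinate $n$ fixed — is effected by the moves $S_2,S_3,S_4$ (and permutations of the last three coordinates), and that these moves act on the associated quadratic forms of discriminant $-4n^2$ precisely as $\PSL(2,\ZZ)$ (indeed $\PGL(2,\ZZ)$, once permutations are included) on the equivalence class of $f_\cir$; this is the content of the remark after Proposition~\ref{prop:quadformbij} and \cite[Proposition 3.1.3]{JR23stairs}. Second, under the form-to-quadruple map $Ax^2+Bxy+Cy^2\mapsto(a,A-a,C-a,A+C-B-a)$, the second curvature is $A-a$, i.e. the value $f_\cir(1,0)-n$. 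Applying a matrix $\gamma\in\PGL(2,\ZZ)$ to $f_\cir$ replaces $A=f_\cir(1,0)$ by $(\gamma\circ f_\cir)(1,0)=f_\cir(\gamma\cdot(1,0))$, and as $\gamma$ ranges over $\PGL(2,\ZZ)$ the vector $\gamma\cdot(1,0)$ ranges over all primitive vectors $(x,y)$ with $\gcd(x,y)=1$. Hence the curvatures of the second circles in quadruples $(n,\ast,\ast,\ast)$ of $\cpack$ are exactly $\{f_\cir(x,y)-n:\gcd(x,y)=1\}$. Third, I would check that every circle tangent to $\cir$ arises this way: a circle tangent to $\cir$ is, together with $\cir$ and two of its other neighbors, part of a Descartes quadruple in $\cpack$ containing $n$, and we can permute it into the second slot. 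Finally, I would confirm the multiset claim: distinct primitive $(x,y)$ up to the sign ambiguity $(x,y)\sim(-x,-y)$ give the genuine multiplicities with which curvatures of tangent circles repeat, so no collapsing of the multiset occurs beyond that forced sign identification (and $f_\cir(x,y)=f_\cir(-x,-y)$ accounts for it exactly).

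The step I expect to be the main obstacle — or at least the one requiring the most care — is the precise bookkeeping in the third step: verifying that \emph{every} tangent circle, not merely those visibly obtained by a single move, appears as a second coordinate of some Descartes quadruple in $\cpack$ containing $n$, and that the correspondence between primitive vectors $(x,y)$ and tangent circles is exactly two-to-one. This is essentially the statement that the nerve of the packing restricted to the neighbors of $\cir$ is connected and exhausted by the $\langle S_2,S_3,S_4\rangle$-orbit acting on one quadruple, which is standard Apollonian combinatorics but deserves an explicit pointer to \cite{GLMWY02} or \cite{Sar07}. Once that is in hand, the remaining arithmetic is the routine substitution check sketched above, and the identification $(\gamma\circ f_\cir)(1,0)=f_\cir(\gamma\cdot(1,0))$ with $\{\gamma\cdot(1,0):\gamma\in\PGL(2,\ZZ)\}=\{(x,y):\gcd(x,y)=1\}$.
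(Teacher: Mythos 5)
Your overall strategy is sound, but note that the paper does not prove this proposition at all: it is presented as Sarnak's observation and cited to \cite{Sar07}, so there is no in-paper argument to compare against; when the paper later needs this statement in refined form (Proposition~\ref{prop:betatangent}) it reaches it instead through the Schmidt-arrangement parametrization of \cite{KS18}. Your reconstruction via the dictionary of Proposition~\ref{prop:quadformbij} --- quadruples of $\cpack$ with first circle $\cir$ correspond to the full $\PGL(2,\ZZ)$-class of $f_\cir$ (this is exactly the remark following that proposition, citing \cite[Proposition 3.1.3]{JR23stairs}, so you may use it), the second slot reads off $g(1,0)-n$, and first columns of elements of $\PGL(2,\ZZ)$ sweep out all primitive vectors --- is the classical route, essentially that of \cite{GLMWY02} and \cite{Sar07}, and it does work. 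The step you flag as the main obstacle is indeed where the content sits, and it is worth stating precisely what remains: (i) every circle tangent to $\cir$ lies in some Descartes quadruple of $\cpack$ containing $\cir$, so that it is reachable in the second slot after a permutation; and (ii) the induced map from primitive vectors modulo $\pm1$ to tangent circles is well defined and injective, which amounts to identifying the stabilizer of the ordered tangent pair with the matrices fixing $(1,0)$ up to sign. For the well-definedness half of (ii) you can argue by hand: such matrices act through the family of Lemma~\ref{lemma:crackfamily}, whose members are precisely the circles tangent to both $\cir$ and the chosen neighbour, so the neighbour itself is unchanged. Alternatively, \cite[Proposition 4.6, Theorem 4.7]{KS18} gives (i) and (ii) in one stroke: tangency points of $\cir$ with its neighbours in $\cpack$ are parametrized by primitive elements $x\beta+y\delta$ of the lattice $\Lambda_\cir\subseteq\ZZ[i]$ up to units, and the neighbour at such a point has curvature $N(x\beta+y\delta)-n=f_\cir(x,y)-n$; without (i) and (ii) your argument as written only establishes equality of the underlying sets of values, not the multiset statement.
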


\section{Quadratic obstructions}\label{sec:quadraticobstructions}
Let $u\in\{1, 2, 3, 6\}$. The strategy to prove that no element of $S_{2,u}=\{uw^2:w\in\ZZ\}$ appears as a curvature in $\cpack$ is
\begin{enumerate}
    \item For each circle $\cir\in\cpack$, define  $\chi_2(\cir) \in \{ \pm 1 \}$
    and demonstrate that it is an invariant of $\cpack$:
    \begin{enumerate}
    	\item The value of $\chi_2$ is equal for tangent circles with coprime curvatures.
    	\item One can walk between any two circles via coprime tangencies.
    \end{enumerate}
    \item Packings with a certain $\chi_2(\cpack)$ value and type cannot accommodate curvatures from $S_{2,u}$ amongst circles tangent to a ``large'' subset of $\cpack$.
	\item Every circle in $\cpack$ is tangent to a circle in this large subset.
\end{enumerate}

\subsection{Definition of $\chi_2$}\label{sec:chi2def}
Let $f(x, y)=Ax^2+Bxy+Cy^2$ be a primitive integral positive definite binary quadratic form of discriminant $-4n^2$ for an integer $n\neq 0$.

\begin{proposition}\label{prop:singleton}
	There is a unique properly represented and invertible residue $f(x,y)$ modulo $n$, up to multiplication by a square.
\end{proposition}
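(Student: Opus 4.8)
The plan is to reduce the claim to a statement about the form $f$ modulo $n$, where the discriminant $-4n^2 \equiv 0$, so $f$ is degenerate. First I would pass to a $\PGL(2,\ZZ)$-reduced representative, or rather exploit that since $D = B^2 - 4AC = -4n^2$, working modulo $n$ we get $B^2 \equiv 4AC \pmod{n}$ — but more is true: completing the square, $4A f(x,y) = (2Ax + By)^2 - D y^2 = (2Ax+By)^2 + 4n^2 y^2$, so $4A f(x,y) \equiv (2Ax+By)^2 \pmod{n}$ (indeed modulo $4n^2$, but modulo $n$ suffices here, and modulo $4n$ if one is careful with the factor of $4$). Hence every value $f(x,y)$ satisfies $4A f(x,y) \equiv \square \pmod n$. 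This already shows that any two invertible values $f(x_1,y_1)$ and $f(x_2,y_2)$ satisfy $f(x_1,y_1) \equiv f(x_2,y_2) \cdot \square \pmod n$: their ratio is a square mod $n$ (once one checks invertibility lets us divide, and that $4A$, while possibly not invertible, cancels in the ratio). That is the uniqueness "up to multiplication by a square."

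Second, I would establish existence: that there actually is a properly represented value $f(x,y)$ that is invertible modulo $n$. Since $f$ is a primitive form, $\gcd(A,B,C) = 1$, and a standard fact about binary quadratic forms is that $f$ properly represents integers coprime to any fixed modulus $m$ (here $m = n$); concretely, for each prime $p \mid n$ one can choose $(x,y) \bmod p$ with $p \nmid f(x,y)$ — if $p \nmid A$ take $(1,0)$, if $p \nmid C$ take $(0,1)$, and if $p \mid A$ and $p \mid C$ then $p \nmid B$ so take $(1,1)$ — and then patch together by CRT to get a single $(x,y)$ with $\gcd(x,y)=1$ and $\gcd(f(x,y), n) = 1$. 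This is the proper-representation-of-coprime-values lemma; I would either cite Buell or Cohen or include the two-line CRT argument.

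Third, I would phrase the conclusion: the map sending an invertible properly-represented value to its class in $(\ZZ/n)^\times / ((\ZZ/n)^\times)^2$ is constant, and nonempty, hence a singleton — which is exactly the statement. I expect the main obstacle to be bookkeeping around the non-invertible leading coefficient $4A$: one cannot simply invert $A$ modulo $n$, so the argument that the ratio of two invertible values is a square must be done prime-by-prime (or prime-power-by-prime-power) via $4A f(x,y) \equiv (2Ax+By)^2$, handling the prime $2$ and the powers of primes dividing both $n$ and $A$ with a little care; alternatively, conjugating $f$ by a suitable $\PGL(2,\ZZ)$ element to make the leading coefficient coprime to $n$ (possible by the same representation lemma, since conjugation realizes $f$'s represented values as leading coefficients) sidesteps this entirely, and I would likely take that route to keep the write-up clean.
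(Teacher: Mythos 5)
Your preferred route---pass to a $\PGL(2,\ZZ)$-equivalent form whose leading coefficient is coprime to $n$ (possible since a primitive form properly represents values coprime to $n$), then complete the square to see that modulo $n$ all invertible values lie in the square class of the leading coefficient---is exactly the paper's proof, which likewise completes the square when $\gcd(A,n)=1$ and otherwise replaces $f$ by an appropriate $\PSL(2,\ZZ)$-translate. You also correctly flag the pitfall of trying to cancel the possibly non-invertible factor $4A$ directly, which is precisely why the translate step is the clean way to argue; your write-up just makes explicit the standard representation lemma the paper leaves implicit.
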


Denote any lift of this value to the positive integers by $\rho(f)$.

\begin{proof}
	If $A$ is coprime to $n$, observe that
	\[f(x, y)=A\left(x+\dfrac{B}{2A}y\right)^2+\dfrac{n^2}{A}y^2\equiv A\left(x+\dfrac{B}{2A}y\right)^2\pmod{n},\]
	hence this uniquely lies in the coset containing $A$. If $A$ is not coprime to $n$, replace $f$ by an appropriate $\PSL(2, \ZZ)$-translate of $f$.
\end{proof}

This allows us to give a condition for numbers that are not represented by $f$. First, let us recall a few basic properties of the Kronecker symbol. If $a,b,n\in\ZZ^{\geq 0}$, then
\begin{itemize}
    \item $\kron{ab}{n}=\kron{a}{n}\kron{b}{n}$.
    
    \item $\kron{a}{n}=\kron{b}{n}$ if $a\equiv b\pmod{n}$ and $n\not\equiv 2\pmod{4}$, or $a\equiv b\pmod{4n}$ and $n\equiv 2\pmod{4}$.

    \item (Quadratic reciprocity) Write $a=2^ea^{\circ}$ and $b=2^fb^{\circ}$ where $a^{\circ},b^{\circ}$ are odd ($0^{\circ}=1$ by convention). Then
    \[\kron{a}{b}=(-1)^{\frac{a^{\circ}-1}{2}\frac{b^{\circ}-1}{2}}\kron{b}{a}.\]
\end{itemize}

\begin{proposition}\label{prop:quadnosoln}
	Let $n'=\frac{n}{2}$ if $n\equiv 2\pmod{4}$ and $n'=n$ otherwise. Then the Kronecker symbol $\kron{\rho(f)}{n'}$ is independent of the choice of $\rho(f)$ and takes values in $\{\pm 1\}$. Furthermore, let $uw^2$ be coprime to $n$, where $u$ and $w$ are positive integers. If $\kron{\rho(f)}{n'}\neq\kron{u}{n'}$, then $f(x, y)$ does not properly represent $n+uw^2$.
\end{proposition}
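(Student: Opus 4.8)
The plan is to handle the well-definedness assertions directly from the three listed properties of the Kronecker symbol together with Proposition~\ref{prop:singleton}, and then to obtain the non-representation statement by a one-line contradiction argument.

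For well-definedness: since $\rho(f)$ lifts a residue that is invertible modulo $n$, it is coprime to $n$, hence to $n'$ (which divides $n$), so $\kron{\rho(f)}{n'}\in\{\pm1\}$. By Proposition~\ref{prop:singleton} any two admissible values of $\rho(f)$ agree modulo $n$ up to multiplication by an integer square, so one passes from one to the other by multiplying by some $s^2$ and then adding a multiple of $n$. The factor $s$ is coprime to $n'$ (the product must remain invertible mod $n$), so it contributes $\kron{s}{n'}^2=1$ by multiplicativity. For the translation, the whole point of replacing $n$ by $n'$ is that $n'\not\equiv 2\pmod 4$ in every case — if $n\not\equiv 2\pmod 4$ then $n'=n$, and if $n\equiv 2\pmod 4$ then $n'=n/2$ is odd — so $\kron{\cdot}{n'}$ depends only on the residue modulo $n'$, a fortiori only on the residue modulo $n$. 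Hence $\kron{\rho(f)}{n'}$ is independent of the lift.

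For the non-representation claim, suppose toward a contradiction that $f(x,y)=n+uw^2$ with $\gcd(x,y)=1$. Reducing modulo $n$ gives $f(x,y)\equiv uw^2\pmod n$, which is invertible since $\gcd(uw^2,n)=1$; thus $f(x,y)\bmod n$ is a properly represented invertible residue, so by Proposition~\ref{prop:singleton} it equals $\rho(f)\,t^2\bmod n$ for some integer $t$, and $t$ is coprime to $n$ because $\rho(f)t^2$ is invertible mod $n$. Combining, $uw^2\equiv\rho(f)\,t^2\pmod{n'}$ using $n'\mid n$. Applying $\kron{\cdot}{n'}$ — legitimate because $n'\not\equiv 2\pmod 4$ — and using multiplicativity together with $\kron{w}{n'}^2=\kron{t}{n'}^2=1$ (both $w$ and $t$ being coprime to $n$, hence to $n'$), we get $\kron{u}{n'}=\kron{\rho(f)}{n'}$, contradicting the hypothesis.

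The argument is essentially bookkeeping with the Kronecker symbol; the only genuine point of care, and the reason the statement is phrased with $n'$ rather than $n$, is the failure of $\kron{\cdot}{m}$ to be $m$-periodic when $m\equiv 2\pmod 4$. Passing to $n'$ restores periodicity modulo $n$ (after halving), while the divisibility $n'\mid n$ guarantees that the exact congruence $f(x,y)\equiv uw^2\pmod n$ still descends to the weaker congruence modulo $n'$ that the symbol actually detects. I would close by recording the contrapositive in the form that will be used downstream: a curvature of the shape $n+uw^2$ appearing tangent to a circle of curvature $n$ forces $\kron{u}{n'}=\kron{\rho(f_{\cir})}{n'}$, which is the germ of the invariant $\chi_2$.
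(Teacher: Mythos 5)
Your proposal is correct and follows essentially the same route as the paper: well-definedness via Proposition~\ref{prop:singleton} (two lifts differ by a square factor and a multiple of $n$) together with the periodicity of $\kron{\cdot}{n'}$ guaranteed by $n'\not\equiv 2\pmod 4$, and then the non-representation claim by observing that a proper representation of $n+uw^2$ would force $u$ (equivalently $uw^2$) to be a valid choice of $\rho(f)$, giving $\kron{u}{n'}=\kron{\rho(f)}{n'}$. The paper compresses the last step to ``we can take $\rho(f)=u$,'' which is exactly your contradiction argument stated in the contrapositive.
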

\begin{proof}
	Let $\rho_1$ and $\rho_2$ be two choices for $\rho(f)$. Then there exist integers $s, t$ such that $\gcd(s, n)=1$ and $\rho_1=s^2\rho_2+tn$.	Since $n'\not\equiv 2\pmod{4}$ and $\rho_1, \rho_2> 0$,
	\[\dkron{\rho_1}{n'}=\dkron{s^2\rho_2+tn}{n'}=\dkron{s^2\rho_2}{n'}=\dkron{\rho_2}{n'},\]
	hence $\kron{\rho(f)}{n'}$ is well-defined. As $\rho(f)$ is coprime to $n'$, the symbol takes values in $\{\pm 1\}$.
	
	Finally, if $f(x, y)$ properly represents $n+uw^2$, we can take $\rho(f)=u$, and the result follows from above.
\end{proof}

By using the correspondence between circles and quadratic forms, we can now assign a sign $\pm 1$ to each circle in an Apollonian circle packing, which will dictate what quadratic obstructions must occur adjacent to it. By Proposition \ref{prop:quadnosoln}, the following is well-defined.

\begin{definition}\label{def:chi}
	Let $\cir\in\cpack$ be a circle of curvature $n$, and let $\rho =\rho(f_{\cir})$. Define
	\[\chi_2(\cir):=\begin{dcases}
		\dkron{\rho}{n}       & \text{if $n\equiv 0, 1\pmod{4}$;}\\
		\dkron{-\rho}{n/2} & \text{if $n\equiv 2\pmod{4}$;}\\
		\dkron{2\rho}{n}      & \text{if $n\equiv 3\pmod{4}$.}
	\end{dcases}\]
\end{definition}

\subsection{Propagation of $\chi_2$}
We now show that the value $\chi_2(\cir)$ is constant (propagates) across the packing $\cpack$.

\begin{proposition}\label{prop:1step}
	Let $\cir_1,\cir_2\in\cpack$ be tangent circles with coprime curvatures. Then $\chi_2(\cir_1)=\chi_2(\cir_2)$.
\end{proposition}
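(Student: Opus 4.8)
The plan is to reduce the claim $\chi_2(\cir_1) = \chi_2(\cir_2)$ to a single instance of quadratic reciprocity for the Kronecker symbol, using the key observation that if $\cir_1, \cir_2$ are tangent with curvatures $a, b$, then by Proposition~\ref{prop:quadformbij} there is a Descartes quadruple $(a, b, c, d)$ in $\cpack$ containing both, and the form $f_{\cir_1}$ attached to $\cir_1$ (via the quadruple $(a,b,c,d)$) has $a+b$ as its leading coefficient, while the form $f_{\cir_2}$ attached to $\cir_2$ (via the reordered quadruple $(b,a,c,d)$) has the same value $a+b$ as its leading coefficient. Since $\gcd(a,b)=1$, the value $a+b$ is coprime to $a$ and coprime to $b$, so by the proof of Proposition~\ref{prop:singleton} we may take $\rho(f_{\cir_1}) = a+b = \rho(f_{\cir_2})$ (after adjusting by the square/translation ambiguity if $a+b$ is not positive, which we can arrange since only the class mod $a$, resp.\ mod $b$, matters). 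Writing $\rho := a+b$, the problem becomes: show that the quantity defined in Definition~\ref{def:chi} with $(\rho, n) = (\rho, a)$ equals the quantity with $(\rho, n) = (\rho, b)$.

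Next I would split into cases according to the residues of $a$ and $b$ modulo $4$. By Proposition~\ref{prop:admissibleresidues} (or Lemmas~\ref{lem:mod3}, \ref{lem:mod8}) one of $a, b$ is even and one is odd, or both are odd; in the latter case, by Proposition~\ref{prop:aplusbrestriction}, one is $\equiv 3$ and the other $\equiv 7 \pmod 8$. So the genuinely distinct cases are roughly: (i) $a \equiv 0,1 \pmod 4$ and $b \equiv 0,1 \pmod 4$ (not both even, and not both odd unless forbidden — actually two odds both $\equiv 1 \bmod 4$ is ruled out, so this case has $a$ odd $\equiv 1$ and $b$ even $\equiv 0$, or symmetric); (ii) one of them $\equiv 2 \pmod 4$; (iii) one of them $\equiv 3 \pmod 4$ (forcing the other $\equiv 7 \pmod 8$, hence $\equiv 3 \pmod 4$ as well — so both are $3 \bmod 4$, one $3 \bmod 8$ and one $7 \bmod 8$). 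In each case I expand $\chi_2(\cir_1)$ and $\chi_2(\cir_2)$ using the formula, and the product $\chi_2(\cir_1)\chi_2(\cir_2)$ becomes a product of two Kronecker symbols of the shape $\kron{\rho}{a'}\kron{\rho}{b'}$ (with possible sign twists $\pm 1$, $\pm 2$ built into $\rho$ per the three-way definition and per the value $n' = n/2$ when $n \equiv 2 \bmod 4$). The strategy is then to rewrite each $\kron{\rho}{a'}$ via quadratic reciprocity as $\pm \kron{a'}{\rho}$, observe that $\kron{a'}{\rho}\kron{b'}{\rho} = \kron{a'b'}{\rho}$, and use $a' b' \equiv$ (a fixed small thing, e.g.\ $-(a+b)^2/\text{something}$, up to squares) $\pmod{\rho = a+b}$ — indeed $a \equiv -b \pmod{a+b}$, so $a'b'$ is $\equiv$ a square times $\pm 1, \pm 2$ modulo $a+b$ — to collapse the combined symbol to $1$, while the accumulated reciprocity sign factors are checked to cancel using the known residues of $a, b, a+b$ modulo $8$ from Proposition~\ref{prop:aplusbrestriction}.

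The main obstacle I anticipate is the bookkeeping of the sign factors from quadratic reciprocity together with the correction terms $-1, 2$ in the three-part definition of $\chi_2$ and the $n \mapsto n/2$ adjustment in the $n \equiv 2 \pmod 4$ case: one must verify that in every admissible configuration of $(a \bmod 8, b \bmod 8)$ consistent with Proposition~\ref{prop:aplusbrestriction}, the reciprocity exponents $\frac{a^\circ - 1}{2}\cdot\frac{\rho^\circ - 1}{2}$, $\frac{b^\circ - 1}{2}\cdot\frac{\rho^\circ-1}{2}$, the factors of $2$ in $a, b, a+b$, and the built-in twists all multiply to $+1$. This is a finite verification — essentially a table over residues of $(a, b)$ modulo $8$ (and modulo $16$ where a factor $2^e$ with $e \geq 2$ intervenes) — but it is delicate, and the cleanest presentation is probably to handle the $2$-adic valuation of $a+b$ uniformly by noting $\rho = a+b \not\equiv 3,6,7 \pmod 8$ (Proposition~\ref{prop:aplusbrestriction} with the roles $c \to$ itself), so $\rho^\circ \equiv 1 \pmod 4$ whenever $\rho$ is odd, which kills the reciprocity sign for $\kron{\rho}{\cdot}$ outright in the odd cases and leaves only the even-part supplementary laws to track. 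Once that reduction is in place, each remaining case is a one-line symbol computation.
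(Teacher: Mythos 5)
Your overall route is the same as the paper's: take $\rho(f_{\cir_1}) = a+b = \rho(f_{\cir_2})$ via Proposition~\ref{prop:quadformbij} and reduce to a case-by-case application of quadratic reciprocity. However, your case division contains a genuine error. You assert that two tangent coprime circles whose curvatures are both odd and $\equiv 1 \pmod 4$ cannot occur, and on that basis you shrink case (i) to the odd--even situation. This is false: Proposition~\ref{prop:aplusbrestriction} only forbids $a+b \equiv 3, 6, 7 \pmod 8$, and two odd curvatures that are congruent modulo $8$ (as all odd curvatures in a type $(6,k)$ packing are, by Lemma~\ref{lem:mod8}) give $a+b \equiv 2 \pmod 8$, which is allowed. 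Concretely, the two odd members of any Descartes quadruple are mutually tangent; for instance the two curvature-$1$ circles in $(0,0,1,1)$, or the circles of curvatures $-3$ and $5$ in $(-3,5,8,8)$, are tangent with coprime curvatures both $\equiv 1 \pmod 4$. So the odd--odd case in type $(6,k)$ is not exceptional but ubiquitous, and your argument never treats it.

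The omission matters because this is exactly the configuration where your simplifying device breaks down: here $\rho = a+b \equiv 2 \pmod 8$ is even, so the observation that $\rho$ odd forces $\rho^{\circ} \equiv 1 \pmod 4$ (killing the reciprocity signs) does not apply, and after flipping the symbols onto denominator $\rho^{\circ}$ you are left with an extra factor $\kron{2}{ab}$. To dispose of it you need $ab \equiv 1 \pmod 8$, i.e.\ $a \equiv b \pmod 8$, which follows from Lemma~\ref{lem:mod8} (or directly from Proposition~\ref{prop:aplusbrestriction}: if $a \not\equiv b \pmod 8$ with both $\equiv 1 \pmod 4$, then $a+b \equiv 6 \pmod 8$, a contradiction). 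With that input the case closes, so the gap is fixable with tools you already cite; but as written the proof is incomplete. Note that the paper sidesteps the issue entirely in type $(6,k)$ by using periodicity of the Kronecker symbol to rewrite $\kron{a+b}{a}\kron{a+b}{b}$ as $\kron{b}{a}\kron{a}{b}$ (legitimate since $a, b \not\equiv 2 \pmod 4$ there), which handles the odd--odd and odd--even subcases in one stroke. Your cases (ii) and (iii), and the shared choice of $\rho = a+b$, are in line with the paper.
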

\begin{proof}

	Let the curvatures of $\cir_1,\cir_2$ be $a,b$ respectively. Since $\gcd(a+b, a)=\gcd(a+b, b)=1$, by Proposition \ref{prop:quadformbij} we can take $\rho(f_{\cir_1})=a+b=\rho(f_{\cir_2})$ (noting that $a+b>0$).
	
	First, assume the packing has type $(6, k)$. Then $a,b\equiv 0, 1\pmod{4}$, with at least one being odd. Therefore
	\[\chi_2(\cir_1)\chi_2(\cir_2)=\dkron{a+b}{a}\dkron{a+b}{b}=\dkron{a}{b}\dkron{b}{a}=1,\]
	by quadratic reciprocity. This implies that $\chi_2(\cir_1)=\chi_2(\cir_2)$, as claimed.
	
	Otherwise, the packing has type $(8, k)$, and we make two cases. If both $a$ and $b$ are odd, by Proposition~\ref{prop:aplusbrestriction} we can assume $a\equiv 3\pmod{8}$ and $b\equiv 7\pmod{8}$. Thus 
	\[\chi_2(\cir_1)\chi_2(\cir_2)=\dkron{2a+2b}{a}\dkron{2a+2b}{b}=\dkron{2}{ab}\dkron{b}{a}\dkron{a}{b}=(-1)(-1)=1,\]
	where we used the fact that $ab\equiv 5\pmod{8}$ and quadratic reciprocity.
	
	Finally, assume that $a$ is odd and $b$ is even, necessarily $2\pmod{4}$. We write $b=2b'$ and compute
	\[\chi_2(\cir_1)\chi_2(\cir_2)=\dkron{2a+2b}{a}\dkron{-a-b}{b'}=\dkron{4b'}{a}\dkron{-a}{b'}=\dkron{-1}{b'}\dkron{b'}{a}\dkron{a}{b'}=(-1)^{(b'-1)/2}(-1)^{(b'-1)/2}=1,\]
	completing the proof.
\end{proof}

\subsection{Coprime curvatures}

The following lemma is a straightforward consequence of Proposition \ref{prop:quadformbij}.

\begin{lemma}\label{lemma:crackfamily}
	Let $(a, b, c, d)$ be a Descartes quadruple, where curvatures $a, b$ correspond to circles $\cir_1,\cir_2$ respectively. The curvatures of the family of circles tangent to both $\cir_1$ and $\cir_2$ are parameterized by
    \[f(x) = (a + b)x^2 - (a + b + c - d)x + c, \quad x \in \ZZ.\]
\end{lemma}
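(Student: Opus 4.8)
The plan is to read the family directly off the quadratic form attached to $\cir_1$ by Proposition~\ref{prop:quadformbij}. Using the Descartes quadruple $(a,b,c,d)$, associate to $\cir_1$ the form $F(x,y)=(a+b)x^2+(a+b+c-d)xy+(a+c)y^2$, so that the curvatures of circles tangent to $\cir_1$ are exactly $\{F(x,y)-a:\gcd(x,y)=1\}$, a circle corresponding to a primitive vector $(x,y)$ taken up to sign. In this dictionary $\cir_2\leftrightarrow(1,0)$ (since $F(1,0)-a=b$), while $\cir_3\leftrightarrow(0,1)$ and $\cir_4\leftrightarrow(1,-1)$.

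Next I would invoke the standard description --- underlying Proposition~\ref{prop:quadformbij}, see also \cite[Proposition 3.1.3]{JR23stairs} --- of which circles tangent to $\cir_1$ are tangent to one another: the circles with primitive vectors $(x,y)$ and $(x',y')$ are tangent if and only if $xy'-x'y=\pm1$. Taking $(x',y')=(1,0)$, the vector of $\cir_2$, a primitive vector $(x,y)$ gives a circle tangent to $\cir_2$ exactly when $y=\pm1$, and every such vector is automatically primitive. Since $(x,-1)$ and $(-x,1)=-(x,-1)$ name the same circle, the map sending $x\in\ZZ$ to the circle with vector $(-x,1)$ is a bijection onto the set of circles tangent to both $\cir_1$ and $\cir_2$, and such a circle has curvature $F(-x,1)-a=(a+b)x^2-(a+b+c-d)x+c=f(x)$. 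This is the assertion of the lemma; as a sanity check, $f(0)=c$ and $f(1)=d$ recover $\cir_3$ and $\cir_4$.

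An alternative route, self-contained given only the move formulas for the $S_i$, avoids the tangency criterion: the circles tangent to both $\cir_1$ and $\cir_2$ form a doubly-infinite chain $(\cir_{(k)})_{k\in\ZZ}$ with $\cir_{(0)}=\cir_3$, $\cir_{(1)}=\cir_4$, in which $\cir_{(k-1)}$ and $\cir_{(k+1)}$ are the two Apollonius partners tangent to $\cir_1,\cir_2,\cir_{(k)}$. Reading the Descartes equation as a quadratic in the fourth curvature shows these two partners have curvatures summing to $2(a+b+e_k)$, where $e_k$ denotes the curvature of $\cir_{(k)}$, so $e_{k+1}=2(a+b)+2e_k-e_{k-1}$. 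The general solution of this recurrence is $e_k=(a+b)k^2+\beta k+\gamma$, and the conditions $e_0=c$, $e_1=d$ force $\gamma=c$ and $\beta=d-c-(a+b)$, giving $e_k=f(k)$.

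In either argument the only non-formal ingredient is the claim that \emph{every} circle tangent to both $\cir_1$ and $\cir_2$ is captured --- the unimodularity criterion in the first route, or completeness of the chain in the second --- which is exactly the classical structure of the pencil of circles tangent to a fixed circle. I expect that structural fact to be the sole real obstacle; once it is in hand, the rest is a one-line substitution.
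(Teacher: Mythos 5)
Your proposal is correct, and your second (recurrence) route is essentially the paper's own proof: the paper applies powers of $S_4S_3$ to $(a,b,c,d)$ and verifies $(S_4S_3)^k(a,b,f(0),f(1))=(a,b,f(2k),f(2k+1))$ by a two-tailed induction, which is exactly your chain $e_{k+1}=2(a+b)+2e_k-e_{k-1}$ with $e_0=c$, $e_1=d$ solved in closed form; the two phrasings differ only in whether one guesses $f$ and inducts or solves the constant-second-difference recurrence. Your first route is genuinely different: it reads the family off the form $F(x,y)$ of Proposition~\ref{prop:quadformbij} together with the unimodularity criterion ($(x,y)$ and $(1,0)$ give tangent circles iff $y=\pm1$), and the substitution $F(-x,1)-a=f(x)$ is computed correctly. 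What that route buys is a uniform picture (it identifies the pencil through the tangency point of $\cir_1,\cir_2$ with a line of primitive vectors in the lattice attached to $\cir_1$, consistent with the Schmidt-arrangement description in \cite{KS18} used later in the paper), but it leans on a tangency criterion that the paper never states, whereas the $S_4S_3$ argument needs only the move formulas. Finally, the completeness point you single out --- that the doubly-infinite chain (equivalently, the $y=\pm1$ vectors) exhausts all circles tangent to both $\cir_1$ and $\cir_2$ --- is also taken for granted in the paper's one-line proof (``the relevant family of curvatures is given by applying powers of $S_4S_3$''), so flagging it as the sole non-formal ingredient is a fair, and arguably more honest, accounting than the original.
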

\begin{proof}
    The relevant family of curvatures is given by applying powers of $S_4S_3$ to the initial quadruple. A two-tailed induction yields
    \[(S_4S_3)^k(a,b,f(0),f(1))=(a,b,f(2k),f(2k+1)),\quad k \in \ZZ,\]
    giving the result.
\end{proof}

Using this family, we can add in extra circles in a tangency walk to ensure coprimality.

\begin{lemma}\label{lem:makecoprime}
	Let $\cir_1, \cir_2\in \cpack$ be tangent circles of respective curvatures $a, b$. Then there exists a circle $\cir'$ that is tangent to both $\cir_1$ and $\cir_2$ whose curvature is coprime to both $\cir_1$ and $\cir_2$.
\end{lemma}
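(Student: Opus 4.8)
The plan is to reduce the statement to a congruence problem using the explicit family of tangent curvatures from Lemma~\ref{lemma:crackfamily}, and then solve that problem with the Chinese remainder theorem, one prime at a time.

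Concretely, I would first dispose of trivial cases: if $a=0$ or $b=0$ we are in the strip packing, excluded throughout; and if $|ab|=1$ then every curvature is automatically coprime to $ab$ and any circle tangent to both $\cir_1$ and $\cir_2$ works. So assume $ab\neq 0$ and $|ab|>1$. Next, complete $(a,b)$ to a Descartes quadruple $(a,b,c,d)$ in $\cpack$, where $c$ and $d$ are the curvatures of the two circles tangent to both $\cir_1$ and $\cir_2$. By Lemma~\ref{lemma:crackfamily} the curvatures of all circles tangent to both are exactly the values $f(x)=(a+b)x^2-(a+b+c-d)x+c$ for $x\in\ZZ$, with $f(0)=c$ and $f(1)=d$. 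It therefore suffices to find $x\in\ZZ$ with $\gcd(f(x),ab)=1$; the associated circle is the desired $\cir'$.

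The heart of the argument is the claim that for every prime $p\mid ab$ one cannot have $p\mid c$ and $p\mid d$ simultaneously. Suppose otherwise, and (without loss of generality) that $p\mid a$. Reducing the Descartes equation $(a+b+c+d)^2=2(a^2+b^2+c^2+d^2)$ modulo $p$ then yields $b^2\equiv 2b^2$, hence $p\mid b$, so $p$ divides $\gcd(a,b,c,d)=1$, contradicting primitivity of $\cpack$. Consequently, for each prime $p\mid ab$ at least one of $f(0)=c$ and $f(1)=d$ is a unit modulo $p$; pick $r_p\in\{0,1\}$ with $p\nmid f(r_p)$. Since $ab$ has only finitely many prime divisors, the Chinese remainder theorem provides $x\in\ZZ$ with $x\equiv r_p\pmod p$ for all $p\mid ab$; as $f$ has integer coefficients, $f(x)\equiv f(r_p)\not\equiv 0\pmod p$ for every such $p$, and since $|ab|>1$ forces $f(x)\neq 0$, we conclude $\gcd(f(x),ab)=1$.

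I expect the only subtle point to be the temptation to argue that a quadratic polynomial has at most two roots modulo $p$ and so a non-root exists: this fails at $p=2$, where $f$ can vanish at every residue (think of $x^2+x$). Restricting the search to $x\in\{0,1\}$ and invoking primitivity sidesteps this entirely, and indeed that primitivity input is the real content of the lemma — everything else is bookkeeping with the Chinese remainder theorem.
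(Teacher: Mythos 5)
Your proof is correct, and it follows the same skeleton as the paper's: parametrize the circles tangent to both $\cir_1$ and $\cir_2$ by the quadratic $f(x)$ of Lemma~\ref{lemma:crackfamily}, show that for each prime $p\mid ab$ the polynomial takes a value prime to $p$, and finish with the Chinese remainder theorem (the paper leaves the CRT step implicit behind ``it suffices''). Where you differ is in the per-prime step. The paper splits into cases: for $p=2$ it invokes the fact that a primitive Descartes quadruple has exactly two odd entries, and for odd $p$ it argues that a quadratic polynomial vanishing identically modulo $p$ forces its coefficients, hence $a,b,c,d$, to vanish, contradicting primitivity. You instead restrict attention to the two values $f(0)=c$ and $f(1)=d$ and show directly from the Descartes equation modulo $p$ that $p\mid ab$, $p\mid c$, $p\mid d$ would force $p\mid\gcd(a,b,c,d)$; this is uniform in $p$, so the $p=2$ subtlety you rightly flag (a quadratic can vanish as a function on $\ZZ/2\ZZ$ without vanishing identically) never arises, and it yields the marginally stronger conclusion that $\cir'$ can always be chosen with $x\equiv 0$ or $1$ modulo each prime dividing $ab$. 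The paper's version is a touch shorter on the page; yours avoids the case split and isolates primitivity as the single input, and your explicit handling of the degenerate cases $ab=0$ and $|ab|=1$ is harmless extra care.
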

\begin{proof}
  It suffices to show that for every prime $p\mid ab$, the function $f(x)$ of Lemma~\ref{lemma:crackfamily} takes a value coprime to $p$. Since primitive Descartes quadruples contain two odd and two even numbers, the result follows for $p=2$. If $p>2$, a quadratic polynomial will completely vanish modulo $p$ only if its coefficients vanish. This implies $p\mid a, b, c, d$, a contradiction in a primitive packing.
\end{proof}

The following corollary is an immediate consequence of Lemma \ref{lem:makecoprime}.

\begin{corollary}\label{cor:coprimepath}
	Let $\cir, \cir'\in \cpack$. Then there exists a path of tangent circles from $\cir$ to $\cir'$ where consecutive circles have coprime curvature.
\end{corollary}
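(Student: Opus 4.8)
The plan is to combine two ingredients: the connectivity of the tangency graph of $\cpack$, which is classical, and Lemma~\ref{lem:makecoprime}, which repairs a single tangency whose two curvatures fail to be coprime by inserting one extra circle.

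First I would recall why any two circles of $\cpack$ are joined by \emph{some} path of consecutively tangent circles. Every circle of $\cpack$ occurs as an entry of at least one Descartes quadruple; the four circles of a Descartes quadruple are mutually tangent, hence span a complete graph on four vertices in the tangency graph; and any two Descartes quadruples in the packing are connected by a finite sequence of the moves $S_i$ (up to a permutation of entries), each of which alters exactly one entry. Inducting on the length of the move sequence from the root quadruple, every circle of $\cpack$ lies in a single connected component of the tangency graph. In particular there is a path $\cir = \cir_0, \cir_1, \dots, \cir_m = \cir'$ in which $\cir_j$ is tangent to $\cir_{j+1}$ for every $j$.

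Next, for each $j$ I would apply Lemma~\ref{lem:makecoprime} to the tangent pair $\cir_j, \cir_{j+1}$ to produce a circle $\cir_j'$ that is tangent to both and whose curvature is coprime to the curvatures of both. Replacing each edge $\cir_j\cir_{j+1}$ of the original path by the detour $\cir_j, \cir_j', \cir_{j+1}$ yields the path
\[
\cir_0,\ \cir_0',\ \cir_1,\ \cir_1',\ \dots,\ \cir_{m-1}',\ \cir_m,
\]
in which every pair of consecutive circles is tangent with coprime curvature, which is exactly the claim.

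I expect no serious obstacle: the only non-formal input is the connectivity of the tangency graph, and that is standard. The two points deserving a second's care are that $\cir_j'$ is genuinely tangent to both endpoints --- it is one of the circles in the one-parameter family tangent to $\cir_j$ and $\cir_{j+1}$ isolated in Lemma~\ref{lemma:crackfamily} --- and that, by the standing assumption of Section~\ref{sec:quadratic-forms} that $n\neq 0$ (equivalently, $\cpack$ is not the strip packing), all curvatures in sight are nonzero, so that coprimality carries its usual meaning throughout.
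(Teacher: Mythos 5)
Your argument is correct and is exactly the paper's intended one: the paper states the corollary as an immediate consequence of Lemma~\ref{lem:makecoprime}, implicitly using the standard connectivity of the tangency graph via Descartes quadruples and the moves $S_i$, which you simply spell out before repairing each edge with an inserted circle. No issues.
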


The invariance of $\chi_2(\cir)$ follows directly from Proposition \ref{prop:1step} and Corollary \ref{cor:coprimepath}.

\begin{corollary}\label{cor:constantchi}
	The value of $\chi_2$ is constant across all circles in a fixed primitive Apollonian circle packing $\cpack$. Denote this value by $\chi_2(\cpack)$.
\end{corollary}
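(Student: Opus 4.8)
The plan is to deduce this directly from the one-step propagation result (Proposition~\ref{prop:1step}) and the connectivity result (Corollary~\ref{cor:coprimepath}); all the substantive work has already been done in those statements, so this corollary is just a short assembly. The guiding principle is: ``equal on each coprime tangent edge'' plus ``the coprime-tangency graph on $\cpack$ is connected'' forces ``constant''.

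Concretely, I would fix two arbitrary circles $\cir, \cir' \in \cpack$ and apply Corollary~\ref{cor:coprimepath} to produce a finite path $\cir = \cir^{(0)}, \cir^{(1)}, \dots, \cir^{(m)} = \cir'$ of circles in $\cpack$ in which each consecutive pair $\cir^{(j)}, \cir^{(j+1)}$ is tangent with coprime curvatures. Then, for each $j$, Proposition~\ref{prop:1step} applies to the tangent coprime pair $\cir^{(j)}, \cir^{(j+1)}$ and yields $\chi_2(\cir^{(j)}) = \chi_2(\cir^{(j+1)})$. A trivial induction on $m$ (equivalently, transitivity of equality along the path) gives $\chi_2(\cir) = \chi_2(\cir')$. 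As $\cir, \cir'$ were arbitrary, $\chi_2$ is constant on $\cpack$, so we may define $\chi_2(\cpack)$ to be this common value.

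One point worth flagging is that $\chi_2(\cir)$ must first be known to be well-defined, i.e.\ independent of the choice of quadratic form $f_\cir$ attached to $\cir$ (there is an entire $\PGL(2, \ZZ)$-equivalence class of such forms). This is precisely the content of Proposition~\ref{prop:quadnosoln} (resting on Proposition~\ref{prop:singleton}), so nothing further needs checking at this stage. The only genuine difficulties in the whole chain lie upstream: in Proposition~\ref{prop:1step}, where one juggles the three congruence cases for $n \bmod 4$ and applies quadratic reciprocity of the Kronecker symbol to each; and in Lemma~\ref{lem:makecoprime} / Corollary~\ref{cor:coprimepath}, where one inserts an auxiliary circle (via the parameterized family of Lemma~\ref{lemma:crackfamily}) to clear any shared prime factor along a tangency walk. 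Both are already in hand in the excerpt, so the corollary itself presents no real obstacle.
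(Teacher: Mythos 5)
Your proof is correct and is exactly the paper's argument: the paper likewise deduces the corollary immediately by combining Proposition~\ref{prop:1step} along each edge of a coprime-tangency path supplied by Corollary~\ref{cor:coprimepath}. Your additional remark on well-definedness via Proposition~\ref{prop:quadnosoln} is accurate and consistent with how the paper sets things up.
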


Before proving the sets of quadratic obstructions, we have one final coprimality lemma.

\begin{lemma}\label{lem:coprimetangent}
	Let $\cir$ have curvature $n$. Then there exists a circle tangent to $\cir$ with curvature coprime to $6n$.
\end{lemma}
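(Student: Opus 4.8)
The plan is to use Sarnak's parameterization of tangent curvatures together with a prime-by-prime argument glued by the Chinese remainder theorem. By the proposition immediately following the definition of $f_{\cir}$, the curvatures of circles tangent to $\cir$ are exactly the values $f_{\cir}(x,y)-n$ with $\gcd(x,y)=1$, where $f_{\cir}(x,y)=Ax^2+Bxy+Cy^2$ is a \emph{primitive} positive-definite form of discriminant $-4n^2$ (primitivity is part of Proposition~\ref{prop:quadformbij}, since $\cpack$ is primitive). So it is enough to exhibit a coprime pair $(x,y)$ for which $f_{\cir}(x,y)-n$ is coprime to $6n$; I would prescribe $(x,y)$ modulo each prime $p\mid 6n$ and then glue.

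The crux is the following claim: for every prime $p\mid 6n$, at least one of the three values $f_{\cir}(1,0)=A$, $f_{\cir}(0,1)=C$, $f_{\cir}(1,1)=A+B+C$ is $\not\equiv n\pmod p$. Indeed, if all three were $\equiv n\pmod p$, then $A\equiv C\equiv n$ and $B\equiv -n\pmod p$, so $\disc(f_{\cir})=B^2-4AC\equiv n^2-4n^2=-3n^2\pmod p$; since $\disc(f_{\cir})=-4n^2$ this forces $p\mid n^2$, hence $p\mid n$, and then $A\equiv B\equiv C\equiv 0\pmod p$, contradicting primitivity of $f_{\cir}$. Note that $(1,0),(0,1),(1,1)$ are coprime pairs, so in particular none of them is $\equiv(0,0)$ modulo any $p$; this is what lets the gluing below respect coprimality.

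To finish, for each $p\mid 6n$ choose $(x_p,y_p)\in\{(1,0),(0,1),(1,1)\}$ with $f_{\cir}(x_p,y_p)\not\equiv n\pmod p$. Let $y_0$ be an integer with $y_0\equiv y_p\pmod p$ for all $p\mid 6n$ (take $y_0=0$ if every $y_p$ vanishes), and choose $x$ with $x\equiv x_p\pmod p$ for all $p\mid 6n$ and $x\equiv 1\pmod q$ for every prime $q\mid y_0$ with $q\nmid 6n$. Then $f_{\cir}(x,y_0)\equiv f_{\cir}(x_p,y_p)\not\equiv n\pmod p$ for each $p\mid 6n$, so $f_{\cir}(x,y_0)-n$ is coprime to $6n$ (in particular nonzero), and a direct check gives $\gcd(x,y_0)=1$: a common prime divisor of $x$ and $y_0$ that divides $6n$ would be some $p$ with $y_p\equiv 0\pmod p$, forcing $(x_p,y_p)=(1,0)$ and hence $p\nmid x$, while any other common prime divisor $q$ of $x$ and $y_0$ divides $y_0$ and we arranged $x\equiv1\pmod q$. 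Thus the circle tangent to $\cir$ of curvature $f_{\cir}(x,y_0)-n$ is as desired. The one genuinely idea-bearing step is the discriminant computation in the claim; the remaining obstacle is purely the Chinese remainder bookkeeping — above all, keeping the chosen residue pairs away from $(0,0)\bmod p$ so that gluing does not silently create a common factor of $x$ and $y$, which the use of the coprime triples $(1,0),(0,1),(1,1)$ handles automatically. (One could instead try to run this through the one-parameter crack family of Lemma~\ref{lemma:crackfamily}, but a binary form has the advantage over a quadratic polynomial of never being identically zero on coprime pairs modulo $2$, which is exactly what makes the $p=2$ case painless here.)
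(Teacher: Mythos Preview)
Your proof is correct and takes a genuinely different route from the paper's. The paper argues in two stages: first, using the classification of Descartes quadruples modulo $2$ and $3$ (Lemmas~\ref{lem:mod3} and~\ref{lem:mod8}), it finds a circle $\cir'$ tangent to $\cir$ whose curvature $m$ satisfies $6\mid nm$; second, it invokes Lemma~\ref{lem:makecoprime} (via the one-parameter crack family of Lemma~\ref{lemma:crackfamily}) to produce a common tangent with curvature coprime to both $n$ and $m$, hence to $6n$. You instead work directly with the binary form $f_{\cir}$ and dispense with the auxiliary circle entirely: the discriminant identity $B^2-4AC=-4n^2$ is precisely what forces one of the three test pairs $(1,0),(0,1),(1,1)$ to avoid $n$ modulo each $p\mid 6n$, and the CRT glue finishes. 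Your approach is self-contained and sidesteps the modulo-$24$ casework; the paper's approach has the virtue of reusing Lemma~\ref{lem:makecoprime} so that the new content is a one-line observation about quadruples mod $6$. One small cosmetic point: when $y_0=0$ your clause ``$x\equiv 1\pmod q$ for every prime $q\mid y_0$'' is vacuously over all primes, so it is cleaner to simply take $(x,y_0)=(1,0)$ in that case rather than invoke the general recipe.
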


\begin{proof}
    By considering the possible Descartes quadruples modulo 2 and 3, it can be shown that $\cir$ is tangent to a circle $\cir'$ with curvature $m$ where $6\mid nm$. The result now follows from Lemma~\ref{lem:makecoprime}.
\end{proof}

\subsection{Quadratic obstructions}
For each type of packing, we can assemble the above results to determine which values of $u$ and $\chi_2$ produce quadratic obstructions.

\begin{proposition}\label{prop:6robstructions}
	Let $\cpack$ have type $(6, k)$. Then the following quadratic obstructions occur, as a function of type and $\chi_2(\cpack)$:
	\tablestretch{
    \begin{tabular}{|c|c|c|} 
        \hline
        Type    & $\chi_2(\cpack)$ & Quadratic obstructions\\ \hline
        (6, 1)  & 1              & \\ 
                & -1             & $n^2, 2n^2, 3n^2, 6n^2$\\ \hline
        (6, 5)  & 1              & $2n^2, 3n^2$\\
                & -1             & $n^2, 6n^2$\\ \hline
        (6, 13) & 1              & $2n^2, 6n^2$\\
                & -1             & $n^2, 3n^2$\\ \hline
        (6, 17) & 1              & $3n^2, 6n^2$\\
                & -1             & $n^2, 2n^2$\\ \hline
    \end{tabular}
    }
\end{proposition}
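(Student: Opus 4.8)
The plan is to run the three‑step strategy announced at the top of Section~\ref{sec:quadraticobstructions}, taking the ``large subset'' to be the set of circles whose curvature is coprime to $6$. The only ingredient not already in hand is a local observation extracted from Proposition~\ref{prop:admissibleresidues}: in a packing of type $(6,k)$, the residue $k$ is the \emph{unique} residue modulo $24$ that is coprime to $24$ and occurs as a curvature (this is read off directly from the six admissible sets). Consequently every curvature of $\cpack$ that is coprime to $6$ is $\equiv k\pmod{24}$; in particular it is odd and, since each of $1,5,13,17$ is $\equiv 1\pmod 4$, it lands in the first branch of Definition~\ref{def:chi}, so for such a circle $\cir$ of curvature $n$ one simply has $\chi_2(\cir)=\kron{\rho(f_{\cir})}{n}$.

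Next I would argue by contradiction. Suppose some positive curvature $N=uw^2$ (with $w\ge 1$ and $u\in\{1,2,3,6\}$) is realised by a circle $\cir_0\in\cpack$. Applying Lemma~\ref{lem:coprimetangent} to $\cir_0$ produces a circle $\cir$ tangent to $\cir_0$ whose curvature $n$ is coprime to $6N$ (and, choosing within the infinite tangent family, we may take $n>0$); then $\gcd(n,N)=1$, $n$ is odd, $\gcd(u,n)=1$ since $u\mid 6$, and $n\equiv k\pmod{24}$ by the previous paragraph. Because $\cir_0$ is tangent to $\cir$, the description of tangent curvatures in Section~\ref{sec:quadratic-forms} shows that $f_{\cir}$ properly represents $n+N=n+uw^2$; as this value is coprime to $n$, Proposition~\ref{prop:quadnosoln} (with $n'=n$) forces $\kron{\rho(f_{\cir})}{n}=\kron{u}{n}$. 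Combining this with $\chi_2(\cir)=\kron{\rho(f_{\cir})}{n}$ and the constancy of $\chi_2$ across $\cpack$ (Corollary~\ref{cor:constantchi}), and using that the lower‑argument period of $\kron{u}{\cdot}$ divides $24$ for every $u\mid 6$, we obtain
\[
\chi_2(\cpack)=\kron{u}{n}=\kron{u}{k}.
\]
Thus the existence of a single curvature $uw^2$ pins down $\chi_2(\cpack)$ to the value $\epsilon_u:=\kron{u}{k}$, so whenever $\chi_2(\cpack)=-\epsilon_u$ no curvature of the form $uw^2$ can appear in $\cpack$.

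It then remains to compute $\epsilon_u=\kron{u}{k}$ for $k\in\{1,5,13,17\}$ and $u\in\{1,2,3,6\}$ — a short exercise using $\kron{1}{k}=1$, $\kron{2}{k}$ from $k\bmod 8$, $\kron{3}{k}=\kron{k}{3}$ (valid since $k\equiv 1\bmod 4$), and $\kron{6}{k}=\kron{2}{k}\kron{3}{k}$ — and then, for each type and each of the two values of $\chi_2(\cpack)$, to list the $u$ with $\epsilon_u=-\chi_2(\cpack)$. This reproduces the table in the statement (for instance, for type $(6,1)$ one gets $\epsilon_u=1$ for all $u$, so $\chi_2(\cpack)=-1$ obstructs all of $n^2,2n^2,3n^2,6n^2$ while $\chi_2(\cpack)=1$ obstructs nothing). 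Finally, to see that each listed family is genuinely a reciprocity obstruction in the sense of Definition~\ref{def:obstructiondef}, one checks that infinitely many elements of $S_{2,u}$ occupy residue classes in $R(\cpack)$, which is immediate from comparing squares modulo $24$ against the admissible set.

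The argument is short, and I do not expect a deep obstacle: the care needed is bookkeeping — keeping the coprimality hypotheses of Proposition~\ref{prop:quadnosoln} satisfied, noting that type $(6,k)$ forces the auxiliary curvature $n$ to be $\equiv 1\pmod 4$ so that no ``twisted'' branch of $\chi_2$ (unlike in the type $(8,k)$ analysis) ever enters, and carrying out the sixteen Kronecker‑symbol evaluations without sign errors. I would expect those sign computations, together with the admissibility cross‑check modulo $24$, to be the most error‑prone — though entirely elementary — part of the write‑up.
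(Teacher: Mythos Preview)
Your argument is correct and essentially identical to the paper's own proof: assume a curvature $uw^2$ occurs, pass via Lemma~\ref{lem:coprimetangent} to a tangent circle of curvature $n$ coprime to $6uw^2$ (hence $n\equiv k\pmod{24}$), and use Proposition~\ref{prop:quadnosoln} together with Corollary~\ref{cor:constantchi} to deduce $\chi_2(\cpack)=\kron{u}{n}$, then evaluate by quadratic reciprocity. Your write-up is in fact slightly more explicit than the paper's (you spell out why $n\equiv 1\pmod 4$ lands in the first branch of Definition~\ref{def:chi}, why the symbol depends only on $k$, and the admissibility check required by Definition~\ref{def:obstructiondef}), but there is no substantive difference in method.
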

\begin{proof}
    Assume that a circle of curvature $uw^2$ appears in a packing $\cpack$ of type $(6,k)$. By Lemma \ref{lem:coprimetangent}, it is tangent to a circle $\cir$ with curvature $n$ coprime to $6uw^2$, hence $n\equiv k\pmod{24}$. By Proposition \ref{prop:quadnosoln}, the existence of the curvature $uw^2$ tangent to $\cir$ implies that 
	\[\dkron{u}{n}=\kron{\rho(f_{\cir})}{n}=\chi_2(\cpack),\]
	using Definition \ref{def:chi} and Corollary \ref{cor:constantchi}. By quadratic reciprocity,
	\tablestretch{
			\begin{tabular}{|c|c|c|} 
				\hline
				$k\pmod{24}$ & $\kron{2}{n}$ & $\kron{3}{n}$\\ \hline
				1  & 1  & 1  \\ \hline
				5  & -1 & -1 \\ \hline
				13 & -1 & 1  \\ \hline
				17 & 1  & -1 \\ \hline
			\end{tabular}
	}
	These values give the claimed table.
\end{proof}

The $(6, k)$ entries in the table of Theorem~\ref{thm:mainthm} are filled in by intersecting the quadratic obstructions with the possible residue classes. Note that not listing a value of $u$ as a quadratic obstruction in the table does not imply that it cannot be an obstruction, only that this proof method does not rule it out. The completeness of these lists is discussed in Section \ref{sec:computations}.

\begin{proposition}
	Let $\cpack$ have type $(8, k)$.  Then the following quadratic obstructions occur, as a function of type and $\chi_2(\cpack)$:
	\tablestretch{
        \begin{tabular}{|c|c|c|} 
            \hline
            Type    & $\chi_2(\cpack)$ & Quadratic obstructions\\ \hline
            (8, 7)  & 1              & $3n^2, 6n^2$\\ 
                    & $-1$           & $2n^2$   \\ \hline
            (8, 11) & 1              &      \\
                  & $-1$           & $2n^2, 3n^2, 6n^2$\\ \hline
        \end{tabular}
    }
\end{proposition}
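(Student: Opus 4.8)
The plan is to mirror the proof of Proposition~\ref{prop:6robstructions} (the type $(6,k)$ case), but with one essential change of tactic. For a type $(6,k)$ packing a curvature coprime to $6$ is forced into a single residue class modulo $24$, so one can evaluate $\chi_2$ at a coprime-to-$6$ circle tangent to a hypothetical circle of curvature $uw^2$. For type $(8,k)$ there are instead \emph{two} residue classes modulo $24$ coprime to $6$ (one congruent to $3$ and one to $7$ modulo $8$), and Proposition~\ref{prop:aplusbrestriction} only tells them apart for an \emph{odd} circle tangent to an \emph{odd} circle, which is useless when $uw^2$ is even. So instead I will evaluate $\chi_2$ directly at the circle $\cir_0$ of curvature $uw^2$, exploiting the large square factor $w^2$.

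Suppose then that $\cir_0\in\cpack$ has curvature $uw^2$ with $u\in\{2,3,6\}$; we may take $w>0$. A short check shows that for $w$ even, $uw^2$ is congruent modulo $24$ to one of $0,8,12$, none of which lies in $R(\cpack)$ for type $(8,7)$ or $(8,11)$, so Proposition~\ref{prop:admissibleresidues} already forbids such a curvature; hence I may assume $w$ is odd, in which case $uw^2\equiv 2\pmod{4}$ for $u\in\{2,6\}$ and $uw^2\equiv 3\pmod{4}$ for $u=3$. (The complementary fact that $S_{2,u}$ meets $R(\cpack)$ infinitely often, needed for it to be a genuine reciprocity obstruction, is routine, as in the $(6,k)$ case, and is absorbed into the step that fills in Theorem~\ref{thm:mainthm}.)

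The crux is evaluating $\chi_2(\cir_0)$ via Definition~\ref{def:chi}. Write $\rho=\rho(f_{\cir_0})$, a positive integer coprime to $uw^2$, and note that $\kron{a}{w^2}=1$ for every integer $a$ coprime to $w$. For $u=2$ this immediately gives $\chi_2(\cir_0)=\kron{-\rho}{w^2}=1$. For $u=6$, $\chi_2(\cir_0)=\kron{-\rho}{3w^2}=-\kron{\rho}{3}$ (using $\kron{-1}{3}=-1$), and for $u=3$, $\chi_2(\cir_0)=\kron{2\rho}{3w^2}=-\kron{\rho}{3}$ (using $\kron{2}{3}=-1$). It remains to pin down $\kron{\rho}{3}$ when $u\in\{3,6\}$ (so $3\mid uw^2$): by Lemma~\ref{lem:coprimetangent} there is a circle of curvature $m$ tangent to $\cir_0$ with $\gcd(m,6uw^2)=1$, and since $m=f_{\cir_0}(x,y)-uw^2$ for some coprime $(x,y)$, the value $f_{\cir_0}(x,y)=m+uw^2$ is properly represented and invertible modulo $uw^2$, hence lies in the square-coset defining $\rho$ (Proposition~\ref{prop:singleton}); reducing modulo $3$ and using $3\mid uw^2$ gives $\kron{\rho}{3}=\kron{m+uw^2}{3}=\kron{m}{3}$. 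Finally, the type determines $R(\cpack)$ modulo $3$, namely $\{0,1\}$ for type $(8,7)$ and $\{0,2\}$ for type $(8,11)$, so $\gcd(m,3)=1$ forces $\kron{m}{3}=1$ in the first case and $\kron{m}{3}=-1$ in the second.

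Assembling this with $\chi_2(\cir_0)=\chi_2(\cpack)$ (Corollary~\ref{cor:constantchi}): a circle of curvature $uw^2$ in a type $(8,7)$ packing forces $\chi_2(\cpack)=1$ if $u=2$ and $\chi_2(\cpack)=-1$ if $u\in\{3,6\}$, while in a type $(8,11)$ packing it forces $\chi_2(\cpack)=1$ for every $u\in\{2,3,6\}$. Reading off the incompatible cases gives exactly the claimed table: for type $(8,7)$, the obstructions are $3n^2,6n^2$ when $\chi_2(\cpack)=1$ and $2n^2$ when $\chi_2(\cpack)=-1$; for type $(8,11)$, there are none when $\chi_2(\cpack)=1$ and $2n^2,3n^2,6n^2$ when $\chi_2(\cpack)=-1$. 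The main obstacle is conceptual rather than computational: one must notice that the $(6,k)$ argument breaks here, and that the remedy is to compute $\chi_2$ at $\cir_0$ itself, where the square factor $w^2$ collapses the Kronecker symbol down to the single surviving factor $\kron{\rho}{3}$.
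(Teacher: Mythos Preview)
Your proof is correct and takes a genuinely different route from the paper's.

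The paper follows the $(6,k)$ template literally: it evaluates $\chi_2$ at a \emph{neighbouring} circle $\cir$ of curvature $n$ coprime to $6uw^2$, obtains $\chi_2(\cpack)=\kron{2u}{n}$ from Definition~\ref{def:chi} (case $n\equiv 3\pmod 4$) and Proposition~\ref{prop:quadnosoln}, and then splits into cases on $n\pmod{24}$. For $u=3$ this leaves two possible residues for $n$, and one of them is eliminated only by invoking Proposition~\ref{prop:aplusbrestriction} (the $a+b\not\equiv 3,6,7\pmod 8$ constraint) to rule out two tangent curvatures both $\equiv 3\pmod 8$.

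You instead evaluate $\chi_2$ at the circle $\cir_0$ of curvature $uw^2$ itself. The square factor $w^2$ collapses the Kronecker symbol to $1$ (for $u=2$) or to $-\kron{\rho}{3}$ (for $u\in\{3,6\}$), and the latter is pinned down by a single tangent curvature $m$ and the fact that $R(\cpack)\pmod 3$ is determined by the type. This is cleaner: it bypasses the $a+b\pmod 8$ case analysis entirely and makes transparent why the square in $uw^2$ is doing the work. The paper's approach, on the other hand, keeps the argument uniform with the $(6,k)$ case and uses only $\chi_2$ at circles of curvature coprime to~$6$, where the definition simplifies. Both are short; yours is the more direct.
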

\begin{proof}
    Repeat the proof of Proposition \ref{prop:6robstructions}: assume that a circle of curvature $uw^2$ appears in $\cpack$. Then there exists a circle of curvature $n$ coprime to $6uw^2$ that is tangent to our starting circle. Thus
	\[\dkron{u}{n}=\kron{\rho(f_{\cir})}{n}=\dkron{2}{n}\chi_2(\cpack),\]
	giving $\chi_2(\cpack)= \kron{2u}{n}$. By quadratic reciprocity,
	\tablestretch{
    \begin{tabular}{|c|c|c|} 
        \hline
        $k\pmod{24}$ & $\kron{2}{n}$ & $\kron{3}{n}$\\ \hline
        7  & 1  & -1 \\ \hline
        11 & -1 & 1  \\ \hline
        19 & -1 & -1 \\ \hline
        23 & 1  & 1  \\ \hline
    \end{tabular}
	}
 
	First, assume $u=2$. Then $\chi_2(\cpack)=1$, giving the conditions.

	Next, take $u=3$. The only admissible residue class with elements of the form $3w^2$ is $3\pmod{24}$, so we can assume that $w$ is odd. If the packing has type $(8, 7)$, we divide in two cases. 
	
	If $n\equiv 7\pmod{24}$, then $\chi_2(\cpack) = \kron{6}{n}=-1$. The other possibility is $n\equiv 19\pmod{24}$. In this case the two circles under consideration have curvatures which are $3 \pmod{8}$, in contradiction to Proposition \ref{prop:aplusbrestriction}.
	
	For packing type $(8, 11)$, a similar argument works.  The case $n\equiv 11\pmod{24}$ is ruled out by Proposition \ref{prop:aplusbrestriction}, and $n\equiv 23\pmod{24}$ implies $\chi_2(\cpack)=\kron{6}{n}=1$.
	
	The final case is $u=6$. Then $\chi_2(\cpack) = \kron{3}{n}$, and the conclusion follows from quadratic reciprocity.
\end{proof}

\begin{remark}
	It is reasonable to ask if the results in this section can be extended to other values of $u$. The proof will work for larger values of $u$ that have no prime divisors other than $2$ or $3$, but these obstructions are already contained in those with $u\mid 6$. If $u$ has a prime divisor $p\geq 5$, then the Kronecker symbol $\kron{p}{n}$ is not uniquely determined from $n\pmod{24}$. It will rule out $uw^2$ from appearing tangent to a subset of the circles in $\cpack$, but this is not enough to cover the entire packing. Interestingly, this suggests that there may be ``partial'' obstructions: quadratic families whose members appear less frequently than other curvatures of the same general size.
\end{remark}

\section{Quartic obstructions}\label{sec:quarticbstructions}
The proof strategy in this section is similar to the quadratic case, where we define an invariant on the circles in the packing, and show that this forbids certain curvatures. The main difference is the quartic restrictions will only apply to two types of packings, and the propagation of the invariant comes down to quartic reciprocity for $\ZZ[i]$.

\subsection{Quartic reciprocity}
We recall the main definitions and results of quartic reciprocity; see Chapter 6 of \cite{LEM00} for a longer exposition.

The Gaussian integers $\ZZ[i]$ form a unique factorization domain, with units being $\{1, i, -1, -i\}$. For $\alpha=a+bi\in\ZZ[i]$, denote the norm of $\alpha$ by $N(\alpha):=a^2+b^2$. We call $\alpha$ \emph{odd} if $N(\alpha)$ is odd, and \emph{even} otherwise. An even $\alpha$ is necessarily divisible by $1+i$.

\begin{definition}
	If $\alpha=a+bi\in\ZZ[i]$ is odd, call it \emph{primary} if $\alpha\equiv 1\pmod{2+2i}$. This is equivalent to $(a, b)\equiv (1, 0), (3, 2)\pmod{4}$.
\end{definition}

If $\alpha$ is odd, then exactly one \emph{associate} $\alpha$, $i\alpha$, $-\alpha$, $-i\alpha$ of $\alpha$ is primary.

\begin{definition}
	The quartic residue symbol $\quartic{\alpha}{\beta}$ takes in two coprime elements $\alpha,\beta\in\ZZ[i]$ with $\beta$ odd, and outputs a power of $i$. Let $\pi$ be an odd prime of $\ZZ[i]$. If $\alpha$ is coprime to $\pi$, define $\quartic{\alpha}{\pi}$ to be the unique power of $i$ that satisfies
	\[\quartic{\alpha}{\pi}\equiv\alpha^{\frac{N(\pi)-1}{4}}\pmod{\pi}.\]
	Extend the quartic residue symbol multiplicatively in the denominator:
 \[
 \quartic{\alpha}{ u \pi_1 \pi_2 \cdots \pi_n}
 = \quartic{\alpha}{u}
 \quartic{\alpha}{\pi_1}
 \quartic{\alpha}{\pi_2}
 \cdots
 \quartic{\alpha}{\pi_n},
 \]where $\quartic{\alpha}{u}=1$ for any unit $u\in\ZZ[i]$.
\end{definition}

The basic properties of this symbol and the statement of quartic reciprocity are summarized next, following \cite{LEM00}.

\begin{proposition}{{\cite[Propositions 4.1, 6.8]{LEM00}}}
\label{prop:quarticprops}
	The quartic residue symbol satisfies the following properties:
	\begin{enumerate}[label={\upshape\alph*)}]
		\item If $\alpha_1, \alpha_2 \in\ZZ[i]$ with $\alpha_1\alpha_2$ coprime to an odd $\beta \in \ZZ[i]$, then $\quartic{\alpha_1\alpha_2}{\beta}=\quartic{\alpha_1}{\beta}\quartic{\alpha_2}{\beta}$.
		\item If $\alpha_1, \alpha_2, \beta\in\ZZ[i]$ satisfy $\alpha_1\equiv \alpha_2\pmod{\beta}$, $\alpha_1$ and $\beta$ are coprime, and $\beta$ is odd, then $\quartic{\alpha_1}{\beta}=\quartic{\alpha_2}{\beta}$.
		\item If $a, b\in\ZZ$ are coprime integers with $b$ odd, then $\quartic{a}{b}=1$.
	\end{enumerate}
\end{proposition}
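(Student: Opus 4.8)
The plan is to derive all three properties directly from Euler's criterion (the defining congruence of $\quartic{\cdot}{\cdot}$), treating (a) and (b) as essentially formal and concentrating the work on (c).

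For (a) and (b), since the symbol is multiplicative in the denominator by definition, it suffices to treat the case $\beta=\pi$ an odd Gaussian prime. For (a), both $\quartic{\alpha_1\alpha_2}{\pi}$ and $\quartic{\alpha_1}{\pi}\quartic{\alpha_2}{\pi}$ are powers of $i$ congruent to $(\alpha_1\alpha_2)^{(N(\pi)-1)/4}$ modulo $\pi$. For (b), $\alpha_1\equiv\alpha_2\pmod\pi$ gives $\alpha_1^{(N(\pi)-1)/4}\equiv\alpha_2^{(N(\pi)-1)/4}\pmod\pi$, so $\quartic{\alpha_1}{\pi}$ and $\quartic{\alpha_2}{\pi}$ are powers of $i$ congruent modulo $\pi$ (note $\alpha_2$ is coprime to $\pi$ since $\alpha_1$ is). In both cases one concludes by observing that distinct units of $\ZZ[i]$ are incongruent modulo an odd prime, since their pairwise differences have norm $2$ or $4$, coprime to $N(\pi)$.

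For (c), factor the odd integer $b$ into rational primes; by multiplicativity in the denominator it is enough to show $\quartic{a}{q}=1$ for each odd rational prime $q$ with $\gcd(a,q)=1$. If $q\equiv 3\pmod 4$, then $q$ is inert in $\ZZ[i]$, so $N(q)=q^2$ and $(N(q)-1)/4=(q-1)\cdot\tfrac{q+1}{4}$ with $\tfrac{q+1}{4}\in\ZZ$; Fermat's little theorem gives $a^{(N(q)-1)/4}=(a^{q-1})^{(q+1)/4}\equiv 1\pmod q$, hence $\quartic{a}{q}=1$. If $q\equiv 1\pmod 4$, then $q=\pi\overline{\pi}$ splits; applying complex conjugation to the congruence $\quartic{a}{\pi}\equiv a^{(q-1)/4}\pmod\pi$ and using $\overline{a}=a$ yields $\overline{\quartic{a}{\pi}}\equiv a^{(q-1)/4}\pmod{\overline{\pi}}$, so $\quartic{a}{\overline{\pi}}=\overline{\quartic{a}{\pi}}$; therefore $\quartic{a}{q}=\quartic{a}{\pi}\quartic{a}{\overline{\pi}}=\quartic{a}{\pi}\,\overline{\quartic{a}{\pi}}=1$, since $\quartic{a}{\pi}$ is a power of $i$ and hence of modulus $1$. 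Multiplying over all rational primes dividing $b$ (the unit factor contributing $1$ by definition) gives $\quartic{a}{b}=1$.

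The only real obstacle I expect is the split case of (c): the individual symbols $\quartic{a}{\pi}$ and $\quartic{a}{\overline{\pi}}$ need not be trivial, and the point is to recognize that they are complex conjugates of one another — forced by $a$ being rational — so that their product collapses to $1$. Everything else is routine manipulation of Euler's criterion together with the incongruence of distinct units modulo odd primes.
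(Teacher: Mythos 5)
Your proposal is correct, but note that the paper does not actually prove this proposition: it is quoted verbatim from Lemmermeyer \cite[Propositions 4.1, 6.8]{LEM00}, so there is no internal argument to compare against. What you have written is a sound self-contained derivation, and it is essentially the standard textbook proof. The reductions in (a) and (b) to a prime denominator are legitimate because multiplicativity in the denominator is part of the definition, and your closing step — that distinct units of $\ZZ[i]$ are incongruent modulo an odd prime because their differences have norm $2$ or $4$ while $N(\pi)$ is odd and greater than $1$ — is exactly the uniqueness statement needed to pass from Euler's criterion congruences to equalities of symbols. In (c) your case split is the right one: for inert $q\equiv 3\pmod 4$ the identity $(N(q)-1)/4=(q-1)\cdot\frac{q+1}{4}$ together with Fermat's little theorem gives triviality directly, and for split $q\equiv 1\pmod 4$ the key observation that $\quartic{a}{\overline{\pi}}=\overline{\quartic{a}{\pi}}$ (forced by $a$ being fixed under conjugation, plus uniqueness of the unit again) makes the product $\quartic{a}{\pi}\quartic{a}{\overline{\pi}}$ collapse to $1$; this conjugation step is precisely how the result is obtained in the literature. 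The only hygiene points worth making explicit are that coprimality of $a$ and $b$ in $\ZZ$ gives coprimality in $\ZZ[i]$, that the symbol attached to a prime depends only on the ideal (so the choice of associate in the factorization of $b$ is harmless), and that negative $b$ or $b=\pm 1$ is absorbed by the unit factor, all of which are immediate.
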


\begin{proposition}{{\cite[Theorem 6.9]{LEM00}}}
\label{prop:quarticreciprocity}
	Let $\alpha=a+bi$ be primary. Then
	\[
		\quartic{i}{\alpha}=i^{\frac{1-a}{2}}, \quad \quartic{-1}{\alpha}=i^b, \quad \quartic{1+i}{\alpha}=i^{\frac{a-b-b^2-1}{4}}, \quad \quartic{2}{\alpha}=i^{\frac{-b}{2}}.
	\]
	If $\beta\in\ZZ[i]$ is relatively prime to $\alpha$ and primary, then
	\[\quartic{\alpha}{\beta}=(-1)^{\frac{N(\alpha)-1}{4}\frac{N(\beta)-1}{4}}\quartic{\beta}{\alpha}.\]
	In particular, if either $\alpha$ or $\beta$ is an odd primary integer, then $\quartic{\alpha}{\beta}=\quartic{\beta}{\alpha}$.
\end{proposition}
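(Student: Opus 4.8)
This is the classical quartic reciprocity law for $\ZZ[i]$ together with its four supplementary laws, so the plan is to recall the standard proof architecture rather than reinvent it. First I would reduce everything to the case of Gaussian primes: every odd $\beta\in\ZZ[i]$ is a unit times a product of primary primes, so by multiplicativity of $\quartic{\cdot}{\cdot}$ in both arguments (Proposition~\ref{prop:quarticprops}(a) together with the defining multiplicativity in the denominator) it suffices to prove each identity when $\alpha,\beta$ are primary Gaussian primes, and, for the supplementary formulas, when $\alpha$ is a primary prime $\pi$ and the numerator is one of $i$, $-1$, $1+i$, $2$. For a primary prime $\pi$ the symbol is determined by the congruence $\quartic{\gamma}{\pi}\equiv\gamma^{(N(\pi)-1)/4}\pmod{\pi}$, which turns the supplementary laws into explicit computations: $\quartic{-1}{\pi}$ lies in $\{\pm1\}$ and hence equals $(-1)^{(N(\pi)-1)/4}$, which one matches to $i^{b}$ using $N(\pi)=a^2+b^2$ and the two allowed residues of $(a,b)$ modulo $4$; $\quartic{i}{\pi}$ is then the power of $i$ literally equal to $i^{(N(\pi)-1)/4}$ (two powers of $i$ congruent modulo the odd prime $\pi$ must coincide), which one rewrites as $i^{(1-a)/2}$; and $\quartic{2}{\pi}$ reduces to $\quartic{1+i}{\pi}$ via $2=-i(1+i)^2$.

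The real content is the prime-to-prime law $\quartic{\pi}{\lambda}=(-1)^{\frac{N(\pi)-1}{4}\frac{N(\lambda)-1}{4}}\quartic{\lambda}{\pi}$ for distinct primary primes, and the delicate supplement $\quartic{1+i}{\pi}$. Here I would invoke one of the two classical routes. The first is the Gauss--Jacobi sum argument in the style of Eisenstein (treated, e.g., in the book of Ireland and Rosen): express $\quartic{\pi}{\lambda}$ through a quartic Gauss sum over the residue field of $\lambda$ and manipulate such sums --- using their prime factorizations, Davenport--Hasse-type relations, and the classical evaluation of the quadratic Gauss sum --- to interchange the roles of $\pi$ and $\lambda$ and extract the sign. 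The second is to derive the whole package at once from Hilbert reciprocity $\prod_v(\pi,\lambda)_v=1$ for the quartic Hilbert symbol over $\QQ(i)$ (which contains $\mu_4$): the local symbol is trivial at the unique archimedean place and at every finite place not dividing $2\pi\lambda$, equals the quartic residue symbols at the places above $\pi$ and $\lambda$, and contributes exactly the supplementary terms at the place above $1+i$. I expect this step --- the prime reciprocity law and the $1+i$ supplement --- to be the main obstacle; the rest is bookkeeping. Since the present paper only uses the statement, in practice I would simply cite \cite[Theorem~6.9]{LEM00}.

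Finally, the ``in particular'' clause is an immediate corollary of the reciprocity law. If $\alpha$ is an odd primary integer then $\alpha\in\ZZ$, and the case $b\equiv 2\pmod 4$ of \emph{primary} is impossible with $b=0$, forcing $\alpha\equiv 1\pmod 4$; hence $N(\alpha)=\alpha^2$ and
\[
\frac{N(\alpha)-1}{4}=\frac{(\alpha-1)(\alpha+1)}{4}
\]
is even, since $4\mid\alpha-1$ and $2\mid\alpha+1$. Therefore the sign $(-1)^{\frac{N(\alpha)-1}{4}\frac{N(\beta)-1}{4}}$ equals $+1$ and the reciprocity law collapses to $\quartic{\alpha}{\beta}=\quartic{\beta}{\alpha}$; the same conclusion holds if instead $\beta$ is the odd primary integer, by the symmetry of the exponent in $\alpha$ and $\beta$.
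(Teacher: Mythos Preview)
The paper does not prove this proposition at all: it is quoted verbatim as \cite[Theorem~6.9]{LEM00} and used as a black box, so there is no ``paper's own proof'' to compare against. You correctly anticipate this (``in practice I would simply cite \cite[Theorem~6.9]{LEM00}''), your sketch of the standard Gauss-sum or Hilbert-symbol route is accurate background, and your derivation of the ``in particular'' clause --- that a primary rational integer must have $b=0$ and hence $a\equiv 1\pmod 4$, forcing $(N(\alpha)-1)/4$ to be even --- is correct and is exactly the reasoning implicitly relied upon when the paper later invokes this clause.
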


\subsection{Definition of $\chi_4$}\label{sec:chi4def}
Let $\cir$ be a circle of curvature $n$. As we have seen, it is associated to $[f_{\cir}]$, a $\PGL(2, \ZZ)-$equivalence class of quadratic forms of discriminant $-4n^2$. By using the bijection between quadratic forms and fractional ideals, we obtain from $f_\cir$ a unique homothety class of lattices $[\Lambda]$, for some $\Lambda \subseteq \QQ[i]$.  
Up to multiplication by a unit, there is\footnote{The cited proposition is more general, but it is only in the case of class number one that this is true of every homothety class.} a unique representative $\Lambda_\cir$ of $[\Lambda]$ that lies inside $\ZZ[i]$ with covolume $n$ and conductor $n$ (definition below) \cite[Proposition 5.1]{KS18}.   
The values of $f_\cir$ are exactly the norms of the elements of $\Lambda_\cir$.   
By considering the elements of $\Lambda_\cir$, instead of only their norms, we can recover finer information than just quadratic residuosity:  we can access quartic residuosity.  This is the key insight in defining $\chi_4$.

In general, any rank-$2$ lattice $\Lambda \subseteq \ZZ[i]$ has an order, $\operatorname{ord}(\Lambda):= \{ \lambda \in \ZZ[i] : \lambda \Lambda \subseteq \Lambda \}$, which is an order of $\QQ(i)$, not necessarily maximal.  By the \emph{conductor of $\Lambda$}, we will mean the conductor of this order, which is a positive integer.  The following can be proven by observing that $\Lambda$ is locally principal, or by choosing a Hermite basis for $\Lambda$, which will have one element in $\operatorname{ord}(\Lambda)$. 

\begin{proposition}{{\cite[Proposition 6.3]{KS18}}}
\label{prop:slambdasingleton}
Let $n$ be a positive integer. 
	Let $\Lambda\subseteq\ZZ[i]$ be a lattice of covolume $n$ and conductor $n$, and define
	\[S_{\Lambda}:=\{\beta\in\Lambda:\beta\text{ is coprime to $n$}\}.\]
	Then the image of $S_{\Lambda}$ in $(\ZZ[i]/n\ZZ[i])^{\times}/(\ZZ/n\ZZ)^{\times}$ consists of a single element.
\end{proposition}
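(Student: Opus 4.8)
The plan is to work one prime at a time and reduce everything to a local principality statement. First I would use the Chinese remainder theorem: if $p^{k_p}\|n$, then $(\ZZ[i]/n\ZZ[i])^{\times}\cong\prod_{p\mid n}(\ZZ[i]/p^{k_p}\ZZ[i])^{\times}$ and $(\ZZ/n\ZZ)^{\times}\cong\prod_{p\mid n}(\ZZ/p^{k_p}\ZZ)^{\times}$ compatibly, so that $(\ZZ[i]/n\ZZ[i])^{\times}/(\ZZ/n\ZZ)^{\times}$ is the product of the local quotients $(\ZZ[i]/p^{k_p}\ZZ[i])^{\times}/(\ZZ/p^{k_p}\ZZ)^{\times}$. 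Hence it suffices to show, for each $p\mid n$, that the image of $S_\Lambda$ in this local quotient is a single element.

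Fix such a $p$, write $k=k_p$, $R=\ZZ[i]\otimes\ZZ_p=\ZZ_p[i]$ (a complete DVR if $p\not\equiv 1\bmod 4$, a product of two complete DVRs otherwise), and $\mathcal{O}_p=(\ZZ+n\ZZ[i])\otimes\ZZ_p=\ZZ_p+p^kR$. The hypothesis ``conductor $n$'' means precisely that $\operatorname{ord}(\Lambda)=\ZZ+n\ZZ[i]$, and since the multiplier ring localizes, $\operatorname{ord}(\Lambda_p)=\mathcal{O}_p$; thus $\Lambda_p$ is a \emph{proper} $\mathcal{O}_p$-ideal, hence invertible, hence (over the local ring $\mathcal{O}_p$) principal: $\Lambda_p=\lambda_p\mathcal{O}_p$ for some $\lambda_p\in\Lambda_p\subseteq R$. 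Now I would compare indices. On one hand $[R:\Lambda_p]=p^{v_p(n)}=p^{k}$, because $[\ZZ[i]:\Lambda]=n$ (the covolume) and indices localize. On the other hand, since $\lambda_p\mathcal{O}_p\subseteq\lambda_pR\subseteq R$ and multiplication by $\lambda_p$ preserves indices, $[R:\Lambda_p]=[R:\lambda_pR]\cdot[\lambda_pR:\lambda_p\mathcal{O}_p]=p^{v_p(N(\lambda_p))}\cdot[R:\mathcal{O}_p]=p^{v_p(N(\lambda_p))}\cdot p^{k}$. Comparing, $v_p(N(\lambda_p))=0$, so $\lambda_p\in R^{\times}$. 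Consequently $\Lambda_p=\lambda_p(\ZZ_p+p^kR)=\lambda_p\ZZ_p+p^kR$, so $p^kR\subseteq\Lambda_p$ and $\Lambda_p/p^kR\cong\lambda_p\ZZ_p/p^k\lambda_p\ZZ_p\cong\ZZ/p^k\ZZ$ is free of rank one over $\ZZ/p^k\ZZ$, generated by the class of $\lambda_p$.

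To conclude, take any $\beta\in S_\Lambda$. Then $\beta$ is a unit in $R$ (it is coprime to $p$), and $\beta=\lambda_p\mu$ with $\mu\in\mathcal{O}_p$; since $\mu=\lambda_p^{-1}\beta\in R^{\times}$ as well, we get $\mu\in\mathcal{O}_p\cap R^{\times}=\mathcal{O}_p^{\times}=\ZZ_p^{\times}+p^kR$, say $\mu\equiv m\pmod{p^kR}$ with $m\in\ZZ_p^{\times}$. Thus $\beta\equiv m\lambda_p\pmod{p^kR}$: every element of $S_\Lambda$ reduces mod $p^kR$ to $\lambda_p$ times a unit of $\ZZ/p^k\ZZ$, which is exactly the claim for the $p$-local quotient; assembling over all $p\mid n$ via the Chinese remainder theorem finishes the ``single $(\ZZ/n\ZZ)^{\times}$-coset'' part. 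For non-emptiness, I would glue the local units: the reduction $\Lambda\twoheadrightarrow\bigoplus_{p\mid n}\Lambda_p/p^kR$ is surjective, so there is $\beta\in\Lambda$ with $\beta\equiv\lambda_p\pmod{p^kR}$ for every $p\mid n$; such a $\beta$ is coprime to each $p\mid n$, hence to $n$, so $S_\Lambda\neq\varnothing$.

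The crux is the index computation forcing $\lambda_p\in R^{\times}$: this is the only place both hypotheses enter (the covolume pins down $[R:\Lambda_p]$, and the conductor condition is what makes $\Lambda_p$ a proper $\mathcal{O}_p$-ideal, so that ``principal generator'' is available and meaningful). A mild technical nuisance is treating $p=2$ (ramified) and split primes $p\equiv1\bmod 4$ on equal footing, which is why I phrase the valuation count through the norm $N(\lambda_p)$ on $R=\ZZ_p[i]$ rather than through a single uniformizer. An alternative, purely computational route avoids localization: put $\Lambda$ in Hermite normal form $a\ZZ\oplus(b+ci)\ZZ$ with $ac=n$, translate ``conductor $n$'' into a $\gcd$/divisibility condition relating $a,b,c$, and compute $\Lambda/n\ZZ[i]$ directly to see it is cyclic of order $n$; I find the localization argument cleaner to state, but either establishes the same structural fact that $\Lambda/n\ZZ[i]$ is a free rank-one $\ZZ/n\ZZ$-module.
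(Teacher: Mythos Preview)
Your proposal is correct and aligns with the paper's own indication. The paper does not give a full proof (it cites \cite[Proposition 6.3]{KS18}) but remarks just before the statement that it ``can be proven by observing that $\Lambda$ is locally principal, or by choosing a Hermite basis for $\Lambda$, which will have one element in $\operatorname{ord}(\Lambda)$''; you have carried out the first of these two suggested routes in detail and explicitly noted the second as an alternative, so there is nothing substantive to compare.
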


\begin{definition}\label{def:chifour}
	Let $\cpack$ be of type $(6, 1)$ or $(6, 17)$, and let $\cir$ be a circle of non-zero curvature $n$ in $\cpack$, necessarily satisfying $n\equiv 0, 1, 4\pmod{8}$. Suppose $\cir$ corresponds to a lattice $\Lambda_{\cir}\subseteq\ZZ[i]$ of covolume $n$ and conductor $n$. Let $\beta=a+bi\in S_{\Lambda_{\cir}}\cup S_{i\Lambda_{\cir}}$, where $\beta$ is chosen to be primary if $n$ is even.  Write $n=2^en'$ where $n'$ is odd. Define $\chi_4(\cir)$ as
	\[
	\chi_4(\cir):=\begin{cases}
		(-1)^{be/4}\quartic{\beta}{n'} & \text{if $n\equiv 0\pmod{8}$;}\\
		\quartic{\beta}{n} & \text{if $n\equiv 1\pmod{8}$;}\\
		\kron{-1}{n'}\quartic{\beta}{n'} & \text{if $n\equiv 4\pmod{8}$.}\\
	\end{cases}
	\]
\end{definition}

\begin{proposition}\label{prop:betaindependent}
	There exists a choice of $\beta$ satisfying all requirements, and $\chi_4$ is well-defined, independent of this choice, and lies in $\{1,i,-1,-i\}$.
\end{proposition}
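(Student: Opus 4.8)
The plan is to verify the claim in four pieces: existence of an admissible $\beta$, well-definedness under change of $\beta$, constancy of the unit value ($\chi_4 \in \{1,i,-1,-i\}$, i.e.\ it is genuinely a unit), and the compatibility $\chi_4(\cir)^2 = \chi_2(\cir)$ (which, while not literally part of this proposition's statement, is the natural companion and uses the same computations; I would at least flag it). For \emph{existence}: by Proposition~\ref{prop:slambdasingleton}, $S_{\Lambda_\cir}$ is nonempty (its image is a single class, in particular the set is nonempty), and likewise $S_{i\Lambda_\cir}$; so $S_{\Lambda_\cir} \cup S_{i\Lambda_\cir}$ contains an element $\beta = a+bi$ coprime to $n$. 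When $n$ is even we additionally need $\beta$ primary. Here I would use that $n \equiv 0,1,4 \pmod 8$, so in the even case $n\equiv 0,4\pmod 8$ and $n$ is odd-part times a power of $2$ with $v_2(n)\geq 2$; since $\beta$ is coprime to $n$ it is coprime to $1+i$, hence odd, hence has exactly one primary associate among $\beta, i\beta, -\beta, -i\beta$. The point is that $S_{i\Lambda_\cir} = i\cdot S_{\Lambda_\cir}$ and $-\Lambda_\cir = \Lambda_\cir$, so $S_{\Lambda_\cir}\cup S_{i\Lambda_\cir}$ is closed under multiplication by all four units; therefore it contains the (unique) primary associate of any of its elements. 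This establishes existence.

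For \emph{well-definedness}: suppose $\beta_1, \beta_2 \in S_{\Lambda_\cir}\cup S_{i\Lambda_\cir}$ both satisfy the constraints (both primary when $n$ is even). By Proposition~\ref{prop:slambdasingleton}, the images of $\beta_1,\beta_2$ in $(\ZZ[i]/n\ZZ[i])^\times/(\ZZ/n\ZZ)^\times$ agree — but I must be careful, because $S_{i\Lambda_\cir}$ is a \emph{different} coset from $S_{\Lambda_\cir}$, differing by the unit $i \notin (\ZZ/n\ZZ)^\times$ in general. So the correct statement is: $\beta_1/\beta_2 \equiv u r \pmod{n\ZZ[i]}$ for some rational integer $r$ coprime to $n$ and some unit $u\in\{1,i,-1,-i\}$, where $u=1$ if $\beta_1,\beta_2$ lie in the same one of the two sets and $u = \pm i$ otherwise. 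When $n$ is even, the primary condition pins down $u$: a ratio of two primary elements is $\equiv 1 \pmod{2+2i}$, and one checks that $ur$ primary-compatible forces $u=1$ (for $n\equiv 0\pmod 8$) or a controlled sign (for $n\equiv 4 \pmod 8$), absorbed by the $(-1)^{be/4}$ and $\kron{-1}{n'}$ prefactors. Then $\quartic{\beta_1}{n'} = \quartic{ur\beta_2}{n'} = \quartic{u}{n'}\quartic{r}{n'}\quartic{\beta_2}{n'}$ by multiplicativity (Proposition~\ref{prop:quarticprops}(a)) and congruence-invariance (b); and $\quartic{r}{n'}=1$ by (c) since $r\in\ZZ$, $n'$ odd. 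The residual factor $\quartic{u}{n'}$ (or $\quartic{u}{n}$ in the $n\equiv 1\pmod 8$ case) is exactly what the case-dependent prefactors $(-1)^{be/4}$, $\kron{-1}{n'}$ are designed to cancel, using Proposition~\ref{prop:quarticreciprocity}'s explicit formulas for $\quartic{i}{\alpha}$ and $\quartic{-1}{\alpha}$ in terms of $a,b\bmod 4$. This is the step requiring genuine care: tracking which unit $u$ can occur, how the primary normalization (or lack of it, when $n$ is odd) constrains it, and checking the prefactor bookkeeping matches in each of the three congruence classes of $n$ mod $8$. That the output is a power of $i$ (hence in $\{1,i,-1,-i\}$) is then automatic, since the quartic symbol always outputs a power of $i$ and the prefactors are $\pm 1$.

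The \emph{main obstacle} is precisely the interaction between the two lattices $\Lambda_\cir$ and $i\Lambda_\cir$ and the primary normalization. Allowing $\beta \in S_{\Lambda_\cir}\cup S_{i\Lambda_\cir}$ (rather than just $S_{\Lambda_\cir}$) is what makes a primary representative always available in the even case, but it costs us a possible factor of $i$ in $\beta$, and one must confirm that the definition of $\chi_4$ is insensitive to this — equivalently, that $\quartic{i}{n'}$ (resp.\ the relevant symbol) is trivial or is killed by the prefactor. When $n$ is odd ($n\equiv 1\pmod 8$) there is no primary condition imposed, so here the argument instead leans on $n\equiv 1 \pmod 8 \Rightarrow N(\text{stuff})\equiv 1$ type congruences to show $\quartic{i}{n}=1$ and $\quartic{-1}{n}=1$ directly from Proposition~\ref{prop:quarticreciprocity} applied to the (odd integer) modulus $n$, invoking the ``odd primary integer'' clause. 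I would organize the proof as: (1) existence lemma as above; (2) a single computation of $\beta_1/\beta_2 \bmod n\ZZ[i]$ and reduction to showing a unit-symbol factor is absorbed; (3) three short case checks ($n \equiv 0, 1, 4 \pmod 8$) using the explicit quartic reciprocity formulas; (4) remark that $\chi_4(\cir)^2 = \chi_2(\cir)$ follows by squaring, since $\quartic{\cdot}{\cdot}^2 = \kron{\cdot}{\cdot}$ relates the quartic and quadratic symbols and $\rho(f_\cir) = N(\beta)$ up to squares.
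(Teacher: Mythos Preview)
Your structural outline is broadly on track, but there is a genuine gap in the $n\equiv 0\pmod 8$ case, and a related misconception about the role of the prefactors.

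\textbf{The gap.} In the case $n\equiv 0\pmod 8$, the prefactor $(-1)^{be/4}$ is not even defined unless $4\mid b$ (primary only gives $2\mid b$; with $e=3$ and $b\equiv 2\pmod 4$ the exponent $be/4$ is a half-integer). You never establish $4\mid b$. The paper does this by invoking the packing type: since $\cpack$ has type $(6,1)$ or $(6,17)$, every odd curvature is $\equiv 1\pmod 8$. If $\beta=a+bi$ is primary with $b\equiv 2\pmod 4$, then $(a,b)\equiv (3,2)\pmod 4$, so $N(\beta)=a^2+b^2\equiv 5\pmod 8$, and hence $f_{\cir}-n$ would represent a curvature $\equiv 5\pmod 8$, a contradiction. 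This is the one place where the hypothesis on the packing type enters the proof of well-definedness, and you omit it entirely. The same fact $4\mid b$ is also what makes $ub\equiv b\pmod 8$ for odd $u$, which is needed to show the prefactor is invariant under $\beta\mapsto u\beta+\delta$.

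\textbf{The misconception.} You describe the prefactors $(-1)^{be/4}$ and $\kron{-1}{n'}$ as ``designed to cancel'' the residual unit-symbol $\quartic{u}{n'}$. This is not their function here. In the even case, the primary condition already forces the unit to be trivial: if $\beta'=i^k(u\beta+\delta)$ with both $\beta,\beta'$ primary, then $k=1$ would make $b'$ odd (since $u$ is odd and $a$ is odd), contradicting primality; so $k=0$ and there is no unit to absorb. In the odd case $n\equiv 1\pmod 8$, one has $\quartic{i}{n}=i^{(1-n)/2}=1$ directly, so again no absorption is needed. The factor $\kron{-1}{n'}$ in the $n\equiv 4\pmod 8$ case does not depend on $\beta$ at all, so its invariance is trivial; the factor $(-1)^{be/4}$ \emph{does} depend on $\beta$ and must be shown invariant separately, which is exactly where the $4\mid b$ argument above is required. (The true purpose of the prefactors is to make $\chi_4$ propagate across tangent circles, which is the content of the \emph{next} proposition.)
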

\begin{proof}
	First, the set $S_{\Lambda_{\cir}}\cup S_{i\Lambda_{\cir}}$ is uniquely determined by $\cir$. If $n$ is odd and $\beta, \beta'$ are two choices, then by Proposition \ref{prop:slambdasingleton} we have $\beta'=i^k(u\beta+\delta)$ for $k=0, 1$, an integer $u$ coprime to $n$, and $\delta\in n\ZZ[i]$.
	Using Propositions \ref{prop:quarticprops} and \ref{prop:quarticreciprocity} (and recalling that $n\equiv 1\pmod{8}$), we compute
	\begin{equation}\label{eqn:betaindependant}
		\quartic{\beta'}{n}=\quartic{i}{n}^k\quartic{u\beta+\delta}{n}=\quartic{u\beta}{n}=\quartic{u}{n}\quartic{\beta}{n}=\quartic{\beta}{n},
	\end{equation}
	
	Next, assume $n$ is even, hence a multiple of $4$. Pick an arbitrary $\beta\in S_{\Lambda_{\cir}}$, which is necessarily odd. By replacing it with an associate, we can assume it is primary, which proves that a choice of $\beta$ is possible.
	
	As before, assume that two valid choices are $\beta, \beta'$, so that $\beta'=i^k(u\beta+\delta)$ for some integer $k=0, 1$, integer $u$ coprime to $n$, and $\delta\in n\ZZ[i]$. Also write $\beta=a+bi$ and $\beta'=a'+b'i$.
	
	If $k=1$, then $b$ and $b'$ have opposite parity, a contradiction to them being primary. Therefore $k=0$, so $\beta'=u\beta+\delta$. In particular, as $n'$ is an odd integer, the analogous computation to Equation \ref{eqn:betaindependant} still holds, so $\quartic{\beta'}{n'}=\quartic{\beta}{n'}$. It remains to check that the extra factors in the definition of $\chi_4$ in the even case are also independent.
	
	If $n\equiv 4\pmod{8}$, the extra factor is $\kron{-1}{n'}$, which does not depend on $\beta$. 
 
 	If $n\equiv 0\pmod{8}$, the extra factor is $(-1)^{be/4}$, which depends on $b$.  We must verify that $b\equiv 0\pmod{4}$, so that the exponent of $be/4$ is integral. Assuming this is true, and using that $8\mid \delta$ and $u$ is odd, we have $b'\equiv ub\equiv b\pmod{8}$, which completes the proof.
	
	To prove that $b \equiv 0 \pmod{4}$, note that $a^2+b^2=N(\beta)$ is a value properly represented by $f_{\cir}(x, y)$. If $b\not\equiv 0\pmod{4}$, then $a^2+b^2\equiv 5\pmod{8}$, hence $f_{\cir}(x, y) - n$ properly represents a number that is $5-0\equiv 5\pmod{8}$, i.e. $\cpack$ contains a curvature of this form. However, we are in a packing of type either $(6, 1)$ or $(6, 17)$, where all odd curvatures are $1\pmod{8}$, a contradiction.
\end{proof}

\subsection{Propagation of $\chi_4$}
Assume $\cpack$ is of type $(6, 1)$ or $(6, 17)$. In order to relate the $\chi_4$ values of tangent circles, we need a value of $\beta$ that works for both.

\begin{proposition}\label{prop:betatangent}
	Let $\cir_1,\cir_2\in\cpack$ be tangent circles of coprime curvatures $n_1, n_2$. Then there exists  $\beta\in\ZZ[i]$ such that
	 $N(\beta)=n_1+n_2$ and  $\beta$ is a valid choice in Definition \ref{def:chifour} for both $\cir_1$ and $\cir_2$.
\end{proposition}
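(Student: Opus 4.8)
\emph{The plan} is to realize the given tangency inside a single Descartes quadruple, write down the two associated quadratic forms explicitly, and then transport the problem to $\ZZ[i]$, where the desired $\beta$ appears as a Gaussian integer dividing an explicit element. First I would pick a Descartes quadruple $(n_1,n_2,c,d)\in\cpack$ containing both circles. By Proposition~\ref{prop:quadformbij} we may take
\[
f_{\cir_1}(x,y)=mx^2+Bxy+C_1y^2,\qquad f_{\cir_2}(x,y)=mx^2+Bxy+C_2y^2,
\]
with $m=n_1+n_2$, $B=m+c-d$, $C_j=n_j+c$; note $B$ is even (the quadruple sum is even) and $B^2-4mC_j=-4n_j^2$. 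The key structural point is that the two forms share their first two coefficients, so a single Gaussian integer can play the role of the ``$(1,0)$-vector'' for both.

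Next I would pass to the lattice picture. If $\{\beta,\gamma_j\}$ is a basis of a lattice in $\ZZ[i]$ whose norm form is $f_{\cir_j}$, then necessarily $N(\beta)=m$ and $\bar\beta\gamma_j=\tfrac{B}{2}\pm n_j i$ (forced by the cross term $B$ and the discriminant relation), so $\gamma_j=(\tfrac{B}{2}\mp n_j i)/\bar\beta$ must lie in $\ZZ[i]$. Choosing the signs compatibly --- say $\bar\beta\gamma_1=\tfrac B2+n_1 i$ and $\bar\beta\gamma_2=\tfrac B2-n_2 i$ --- and using $\bar\beta\beta=N(\beta)=m$, one gets the identity $\gamma_1=\gamma_2+i\beta$. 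Consequently a \emph{single} divisibility condition, $\bar\beta\mid\tfrac B2-n_2 i$, already forces $\gamma_1,\gamma_2\in\ZZ[i]$, and then $[\beta,\gamma_2]$ and $[\beta,\gamma_1]$ are lattices in $\ZZ[i]$ of covolumes $n_2,n_1$ with norm forms $f_{\cir_2},f_{\cir_1}$.

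So the task reduces to: (i) find $\beta$ with $N(\beta)=m$ and $\bar\beta\mid\tfrac B2-n_2 i$; (ii) check the bookkeeping. For (i), observe that $m=n_1+n_2$ is a sum of two squares (it is the norm of the $(1,0)$-vector of $\Lambda_{\cir_1}\subseteq\ZZ[i]$), and $N(\tfrac B2+n_2 i)=\tfrac{B^2}{4}+n_2^2=mC_2$ is a multiple of $m$; a prime-by-prime valuation count --- forced exponents at $1+i$ and at the inert primes, a free balancing choice at the split primes --- then extracts from $\tfrac B2+n_2 i$ a divisor $\beta$ of norm exactly $m$, and $\bar\beta$ divides $\overline{\tfrac B2+n_2 i}=\tfrac B2-n_2 i$. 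For (ii), $[\beta,\gamma_j]$ has the correct covolume and norm form, hence conductor $n_j$ (a primitive form of discriminant $-4n_j^2$ is a proper ideal of the order of conductor $n_j$), so it represents $\Lambda_{\cir_j}$ up to a unit; since $\gcd(N(\beta),n_j)=\gcd(m,n_j)=1$ this puts $\beta$ into $S_{\Lambda_{\cir_j}}\cup S_{i\Lambda_{\cir_j}}$. Finally, if some $n_j$ is even then $m$ is odd (two coprime curvatures cannot both be even), so $\beta$ is odd; replacing it by its unique primary associate gives a $\beta$ that works simultaneously for both circles.

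The hard part will be the orientation issue hidden in (ii): a priori $[\beta,\gamma_j]$ might represent the complex-conjugate lattice $\overline{\Lambda_{\cir_j}}$ rather than $\Lambda_{\cir_j}$, and we need the \emph{same} $\beta$ (not $\bar\beta$) to be admissible for both circles. Resolving this means tracking the orientation convention that singles out $\Lambda_\cir$ inside its $\PGL(2,\ZZ)$-class --- which comes from the Gaussian-integer model of the packing in \cite{KS18} --- and checking that two circles sharing a tangency inherit compatible orientations, so that the sign choice forced by $\cir_1$ is automatically the right one for $\cir_2$. I expect this consistency check, rather than the algebra above, to be the substance of the proof.
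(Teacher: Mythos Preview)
Your algebra is sound: writing both forms with shared leading coefficients $(m,B)$ and observing $\gamma_1=\gamma_2+i\beta$ once a $\beta$ of norm $m$ is fixed is exactly the right structural picture, and your divisibility argument for extracting such a $\beta$ is fine. You are also right that the orientation issue is the substance --- and you have not resolved it. From the $\PGL(2,\ZZ)$-class of $f_{\cir}$ alone, the lattice is pinned down only up to units \emph{and complex conjugation}, and $\chi_4$ is genuinely sensitive to conjugation: replacing $\beta$ by $\bar\beta$ replaces $\quartic{\beta}{n'}$ by its complex conjugate, swapping the values $\pm i$. Your construction produces $[\beta,\gamma_2]$ with $\operatorname{Im}(\bar\beta\gamma_2)$ of the \emph{opposite} sign to $\operatorname{Im}(\bar\beta\gamma_1)$, so whether it realizes $\Lambda_{\cir_2}$ or $\overline{\Lambda_{\cir_2}}$ cannot be decided without knowing the convention that selects $\Lambda_{\cir}$ in the first place and verifying it behaves consistently across a tangency.

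The paper does not patch this algebraically; it takes a genuinely different route. It invokes the geometric model of \cite{KS18}: realize $\cpack$ inside the Schmidt arrangement as images of $\widehat{\RR}$ under explicit elements of $\PSL(2,\ZZ[i])$. In that model each circle carries a canonical $\Lambda_{\cir}$ --- the lattice of \emph{denominators} of its tangency points --- and two tangent circles literally share a tangency point, hence share a denominator $\beta\in\Lambda_{\cir_1}\cap\Lambda_{\cir_2}$ of norm $n_1+n_2$. The orientation compatibility you anticipated needing is built into the common embedding, so the proof collapses to quoting a few results from \cite{KS18}. Your route could in principle be completed by importing that same orientation convention and checking the sign by hand, but once the Schmidt arrangement is on the table the paper's argument is the shorter one.
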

\begin{proof}
The orbit of the extended real line $\widehat{\RR}$ under the M\"obius transformations $\PSL(2, \ZZ[i])$ is a collection of circles in the extended complex plan $\widehat{\CC}$, called the \emph{Schmidt arrangement} \cite[Definition 1.1]{KS18}.  After a scaling by $2$, the curvatures of all circles in the arrangement are integers \cite[Proposition 3.7]{KS18}.  The scaled Schmidt arrangement coincides with the \emph{Apollonian super-packing} of \cite{GLMWY06}, and contains, up to the symmetries of the arrangement, exactly one copy of every primitive Apollonian circle packing (\cite[Theorem 6.2]{GLMWY06} and \cite[Theorem 1.3]{KS18_Bi}).  In this way, the packing $\cpack$ can be given a definite location in $\widehat{\CC}$, and its tangency points can be described as elements of $\QQ(i) \subseteq \CC$.  

Let $\cir \in \cpack$ of curvature $n$ inside the Schmidt arrangement be the image of $\widehat{\RR}$ under the M\"obius transformation $\sm{\alpha}{\gamma}{\beta}{\delta}$.  Proposition 4.6 of \cite{KS18} describes the set of tangency points of $\cir$ with the circles of $\cpack$ which it touches, namely 
\[
\left\{ \frac{\eta}{\mu} : \lmvec{\eta}{\mu} \in \lmvec{\alpha}{\beta} \ZZ + \lmvec{\gamma}{\delta}\ZZ \right\}.
\]
The lattice of denominators $\beta \ZZ + \delta \ZZ$ then coincides with $\Lambda_\cir$ \cite[Theorem 4.7]{KS18}.  In particular, we can write $f_\cir(x, y) = N(x \beta + y \delta)$, so that the circle tangent to $\cir$ at point $x \smvec{\alpha}{\beta} + y \smvec{\gamma}{\delta}$ has curvature $N(x \beta + y \delta) - n$ \cite[Proposition 4.6]{KS18}.

Hence, if we consider tangent circles $\cir_1$ and $\cir_2$ of curvatures $n_1$ and $n_2$ respectively, then we obtain two lattices $\Lambda_{\cir_1}$ and $\Lambda_{\cir_2}$ which share an element $\beta$, namely the denominator of the shared tangency point.  In particular, $N(\beta) = n_1 + n_2$.

Finally, the lattice $\Lambda_\cir$ associated to a circle $\cir$ is defined only up to unit multiple, which allows for us to arrange for $\beta$ to be primary if necessary.
\end{proof}

\begin{proposition}\label{prop:chi4onestep}
	Let $\cir_1,\cir_2\in\cpack$ be tangent circles of coprime curvature. Then $\chi_4(\cir_1)=\chi_4(\cir_2)$.
\end{proposition}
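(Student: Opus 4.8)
The plan is to mirror the proof of Proposition~\ref{prop:1step}, replacing quadratic reciprocity with quartic reciprocity. First I would invoke Proposition~\ref{prop:betatangent} to fix a single $\beta = a+bi \in \ZZ[i]$ with $N(\beta) = a^2+b^2 = n_1+n_2$ that is a valid choice in Definition~\ref{def:chifour} for both $\cir_1$ and $\cir_2$; in particular $\gcd(\beta,n_1) = \gcd(\beta,n_2) = 1$ in $\ZZ[i]$, and $\beta$ is primary whenever one of the $n_j$ is even. Since $\cpack$ has type $(6,1)$ or $(6,17)$, the curvatures $n_1,n_2$ are $\equiv 0,1,4\pmod 8$, and coprimality forces the pair $(n_1\bmod 8, n_2\bmod 8)$ to be one of $(1,1)$, $(0,1)$, $(4,1)$ up to interchanging $\cir_1$ and $\cir_2$; I would treat these three cases. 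It is worth observing at the outset that $\chi_4(\cir_j)^2 = \chi_2(\cir_j) = \chi_2(\cpack)$ by Corollary~\ref{cor:constantchi}, so the ratio $\chi_4(\cir_1)/\chi_4(\cir_2)$ is a priori $\pm 1$; hence it suffices to compute this ratio and verify that it equals $+1$ (equivalently, that $\chi_4(\cir_1)\chi_4(\cir_2) = \chi_2(\cpack)$).

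In each case I would expand $\chi_4(\cir_1)$ and $\chi_4(\cir_2)$ via Definition~\ref{def:chifour} and combine the two quartic symbols using multiplicativity in the denominator, writing their product---up to the explicit correction factors $(-1)^{be/4}$ or $\kron{-1}{n'}$ attached to an even curvature---as $\quartic{\beta}{m}$ for a single odd integer $m$ built from the odd parts of $n_1$ and $n_2$. When $\beta$ is itself even, which occurs precisely in case $(1,1)$ where $N(\beta) = n_1+n_2 \equiv 2\pmod{8}$ forces $\beta = (1+i)\beta'$ with $\beta'$ odd, I would first split off the factors $\quartic{1+i}{n_j} = i^{(n_j-1)/4}$ from Proposition~\ref{prop:quarticreciprocity}. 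Then, using the reciprocity clause that $\quartic{\alpha}{\gamma} = \quartic{\gamma}{\alpha}$ whenever one argument is an odd primary integer, I would flip each symbol to place $n_j$ (or its primary associate) in the numerator over $\beta$ or $\beta'$; the norm relation $\beta\bar\beta = n_1+n_2$ gives $n_1 \equiv -n_2 \pmod{\beta}$, which together with $\quartic{a}{b} = 1$ for coprime integers (Proposition~\ref{prop:quarticprops}(c)) collapses the ratio to $\quartic{-1}{\beta}$, or $\quartic{-1}{\beta'}$, times the accumulated correction factors. The target $\chi_2(\cpack)$ is evaluated exactly as in the proof of Proposition~\ref{prop:1step}, taking $\rho(f_{\cir_1}) = n_1+n_2$.

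The main obstacle is the $(1+i)$-adic bookkeeping: one must check that the correction factors $(-1)^{be/4}$, $\kron{-1}{n'}$, $\quartic{1+i}{n_j}$, and $\quartic{-1}{\beta}$ combine to exactly $+1$ in every case, and not merely to some fourth root of unity. This requires pinning down $a$ and $b$ modulo $4$ (or more), and it is here that the type $(6,1)$ or $(6,17)$ hypothesis re-enters: since such packings contain no curvature $\equiv 5\pmod 8$, the norm $N(\beta) = a^2+b^2$ never lies in the class $5 \bmod 8$, which---exactly as in the proof of Proposition~\ref{prop:betaindependent}---forces the needed congruence on $b$ (for instance $b \equiv 0\pmod 4$ when $n_j \equiv 0 \pmod 8$). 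The a priori fact that the ratio is $\pm 1$ serves as a guardrail, reducing the final check in each case to a single sign.
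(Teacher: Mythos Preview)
Your proposal is correct and follows essentially the same route as the paper: the shared $\beta$ from Proposition~\ref{prop:betatangent}, the case split on $(n_1 \bmod 8, n_2 \bmod 8)$, the extraction of $\quartic{1+i}{n_j}$ in the odd--odd case, the flip via quartic reciprocity together with $n_1 \equiv -n_2 \pmod{\beta}$, and the final congruence bookkeeping on $b$ using the type hypothesis are exactly what the paper does. One caveat: the identity $\chi_4(\cir)^2 = \chi_2(\cir)$ that you use as a guardrail is established \emph{after} this proposition in the paper, so if you want to keep that shortcut you should verify it directly for a single circle (which is easy and does not require constancy of $\chi_4$); otherwise the argument must, as the paper's does, show outright that the accumulated power of $i$ equals $1$ in each case rather than merely $\pm 1$.
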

\begin{proof}
Let $n_1$ and $n_2$ be the curvatures of $\cir_1$ and $\cir_2$ respectively.
Since $\cpack$ has type $(6,1)$ or $(6,17)$, $n_1,n_2 \equiv 0,1$, or $4 \pmod{8}$.
	Take a $\beta$ as promised by Proposition \ref{prop:betatangent}, and assume that $n_1$ is odd. If $n_2$ is also odd, then $N(\beta)=n_1+n_2\equiv 2\pmod{8}$, hence $\beta=(1+i)\beta'$, with $\beta'$ odd. By replacing $\beta$ with an associate, we can assume that $\beta'=a+bi$ is primary. We compute
	\[
	\chi_4(\cir_1)=\quartic{\beta}{n_1}=\quartic{1+i}{n_1}\quartic{\beta'}{n_1}=i^{\frac{n_1-1}{4}}\quartic{n_1}{\beta'}.
	\]
	As $n_1+n_2=N(\beta)=\beta\overline{\beta}$, we have $n_1\equiv -n_2\pmod{\beta'}$.  Thus
	\[
	\chi_4(\cir_1)=i^{\frac{n_1-1}{4}}\quartic{-n_2}{\beta'}=i^{\frac{n_1-1}{4}}\quartic{-1}{\beta'}\quartic{n_2}{\beta'}=i^{\frac{n_1-1}{4}}i^b\quartic{\beta'}{n_2}=i^{\frac{n_1-1}{4}}i^bi^{-\frac{n_2-1}{4}}\chi_4(\cir_2).
	\]
	In order to conclude that $\chi_4(\cir_1)=\chi_4(\cir_2)$, we must have $\frac{n_1-1}{4}+b-\frac{n_2-1}{4}\equiv 0\pmod{4}$, i.e.
	\begin{equation}\label{eqn:chi4onemod8}
		n_1-n_2+4b\equiv 0\pmod{16}.
	\end{equation}
	If $n_1\equiv n_2\pmod{16}$, then $2(a^2+b^2)=n_1+n_2\equiv 2\pmod{16}$, hence $a^2+b^2\equiv 1\pmod{8}$. In particular, $4\mid b$ (recall that $\beta'$ is primary), and Equation \ref{eqn:chi4onemod8} follows. Otherwise, $n_1\equiv n_2 + 8\pmod{16}$, so $2(a^2+b^2)\equiv 10\pmod{16}$, and $a^2+b^2\equiv 5\pmod{8}$. This implies that $b\equiv 2\pmod{4}$, and again Equation \ref{eqn:chi4onemod8} is true.
	
	If $n_2$ is even, then $\beta=a+bi$ is primary by assumption. We compute
	\[
	\chi_4(\cir_1)=\quartic{\beta}{n_1}=\quartic{n_1}{\beta}=\quartic{-n_2}{\beta}=\quartic{n_2/4}{\beta},
	\]
	where the last equality follows from $\quartic{-4}{\beta}=\quartic{1+i}{\beta}^4=1$. We now separate into the cases $n_2\equiv 0$ or $4 \pmod{8}$.
	
	If $n_2\equiv 0\mod{8}$, write $n_2=2^en_2'$ with $n_2'$ odd, and 
 	\[
	\chi_4(\cir_2)=(-1)^{be/4}\quartic{\beta}{n_2'}=(-1)^{be/4}\quartic{\pm n_2'}{\beta}=(-1)^{be/4}\quartic{\mp 1}{\beta}\quartic{-n_2'}{\beta}=(-1)^{be/4}\quartic{\mp 1}{\beta}\quartic{2^{e}}{\beta}^{-1}\chi_4(\cir_1),
	\]
	where the sign of the $\pm$ depends on $n_2'$ modulo $4$. As in the proof of Proposition \ref{prop:betaindependent}, we have $4\mid b$, hence $\quartic{-1 }{\beta}=i^b=1$, so the $\pm$ sign does not matter. We also compute 
	\[\quartic{2^{e}}{\beta}^{-1}=\left(i^{-b/2}\right)^{-e}=(-1)^{be/4}=(-1)^{-be/4},\]
	hence the terms cancel and $\chi_4(\cir_2)=\chi_4(\cir_1)$.
	
	Finally, assume $n_2\equiv 4\pmod{8}$, so $n_2'=n_2/4$. If $n_2'\equiv 1\pmod{4}$, then
	\[
	\chi_4(\cir_2)=\kron{-1}{n_2'}\quartic{\beta}{n_2'}=\quartic{n_2'}{\beta}=\chi_4(\cir_1),
	\]
	as desired. Otherwise, $n_2'\equiv 3\pmod{4}$ and
	\[
	\chi_4(\cir_2)=\kron{-1}{n_2'}\quartic{\beta}{n_2'}=-\quartic{-n_2'}{\beta}=-\quartic{-1}{\beta}\chi_4(\cir_1)=-i^{-b}\chi_4(\cir_1).
	\]
	Since $a^2+b^2=n_1+n_2\equiv 5\pmod{8}$, we have $b\equiv 2\pmod{4}$, so $-i^b=1$, completing the proof.
\end{proof}

Corollary \ref{cor:coprimepath} and Proposition \ref{prop:chi4onestep} combine to give the following corollary.

\begin{corollary}
	The value of $\chi_4$ is constant across all circles in a packing $\cpack$ of type $(6, 1)$ or $(6, 17)$. Denote this value by $\chi_4(\cpack)$.
\end{corollary}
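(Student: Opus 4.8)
The plan is to repeat verbatim the structure of the argument that gave Corollary~\ref{cor:constantchi} for $\chi_2$, now feeding in the quartic one-step result in place of the quadratic one. Fix a packing $\cpack$ of type $(6,1)$ or $(6,17)$, so that (as recorded in Definition~\ref{def:chifour} and Proposition~\ref{prop:betaindependent}) $\chi_4$ is defined at every circle of $\cpack$: every circle $\cir$ of curvature $n$ has a lattice representative $\Lambda_\cir \subseteq \ZZ[i]$ of covolume $n$ and conductor $n$ by \cite[Proposition 5.1]{KS18}, and the value $\chi_4(\cir) \in \{1,i,-1,-i\}$ is independent of the auxiliary choice of $\beta$. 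So the statement is meaningful circle-by-circle, and what remains is to show the common constancy.

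Given arbitrary circles $\cir, \cir' \in \cpack$, apply Corollary~\ref{cor:coprimepath} to produce a path of tangent circles $\cir = \cir_0, \cir_1, \dots, \cir_m = \cir'$ in which each consecutive pair $\cir_j, \cir_{j+1}$ has coprime curvature. Proposition~\ref{prop:chi4onestep} then gives $\chi_4(\cir_j) = \chi_4(\cir_{j+1})$ for every $j$, and chaining these equalities along the path yields $\chi_4(\cir) = \chi_4(\cir')$. Since $\cir$ and $\cir'$ were arbitrary, $\chi_4$ takes a single value on all of $\cpack$, which we name $\chi_4(\cpack)$.

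There is essentially no obstacle at this stage: both ingredients have already been established — the connectivity-by-coprime-tangencies statement (Corollary~\ref{cor:coprimepath}, which depends only on the Descartes structure, not on the type) and the one-step propagation via quartic reciprocity (Proposition~\ref{prop:chi4onestep}). The only point worth a sentence is that Corollary~\ref{cor:coprimepath} is stated for an arbitrary primitive packing, so in particular it applies to packings of type $(6,1)$ and $(6,17)$; everything else is a finite chain of equalities.
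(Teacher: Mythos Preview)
Your proof is correct and matches the paper's own argument exactly: the paper simply states that the corollary follows by combining Corollary~\ref{cor:coprimepath} (coprime-tangency connectivity) with Proposition~\ref{prop:chi4onestep} (one-step propagation of $\chi_4$), which is precisely what you do with a bit more detail.
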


\subsection{Quartic obstructions}

\begin{proposition}
	Then the following quartic obstructions occur, as a function of type and $\chi_4(\cpack)$: 
	\tablestretch{
        \begin{tabular}{|c|c|c|} 
            \hline
            Type    & $\chi_4(\cpack)$ & Quartic obstructions\\ \hline
            (6, 1)  & $1$              & \\ 
                    & $-1, i, -i$      & $n^4, 4n^4, 9n^4, 36n^4$\\ \hline
            (6, 17) & $1$              & $9n^4, 36n^4$\\
                    & $-1$             & $n^4, 4n^4$\\
                    & $i, -i$          & $n^4, 4n^4, 9n^4, 36n^4$\\ \hline
        \end{tabular}
    }
\end{proposition}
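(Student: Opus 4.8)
The plan is to mirror the quadratic argument of Proposition~\ref{prop:6robstructions}: show that the presence of \emph{any} curvature $uw^4$ in $\cpack$ forces $\chi_4(\cpack)$ to equal one particular value $\epsilon$ depending only on $u\in\{1,4,9,36\}$ and on whether $\cpack$ has type $(6,1)$ or $(6,17)$, and then invoke the contrapositive. First I would reduce to a single Gaussian integer. Suppose $\cir_0\in\cpack$ has curvature $m=uw^4$. By Lemma~\ref{lem:coprimetangent}, $\cir_0$ is tangent to some $\cir\in\cpack$ of curvature $n$ coprime to $6m$, so $n\equiv k\pmod{24}$ with $k\in\{1,17\}$; in particular $n\equiv 1\pmod 8$ and $n\bmod 3$ is determined by the type ($1$ for $(6,1)$, $2$ for $(6,17)$). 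Proposition~\ref{prop:betatangent} applied to this tangent pair supplies $\gamma\in\ZZ[i]$ with $N(\gamma)=n+m$ that is a legitimate choice for $\cir_0$ in Definition~\ref{def:chifour}; since $N(\gamma)\equiv n\pmod m$ and $\gcd(n,m)=1$, this $\gamma$ is coprime to $m$. Because $\chi_4$ is constant on $\cpack$, we then have $\chi_4(\cpack)=\chi_4(\cir_0)$, computed from $\gamma$ and the curvature $m$ alone.

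Next I would collapse that computation to one symbol. Write $u=v^2$ with $v\in\{1,2,3,6\}$ and $m=2^{e}m'$ with $m'$ odd; the regime ($m$ odd, or $m\equiv 4\pmod 8$, or $m\equiv 0\pmod 8$) is dictated by $u$ and $v_2(w)$. In each regime the correction prefactors in Definition~\ref{def:chifour} collapse: $\kron{-1}{m'}=1$ because $m'\equiv 1\pmod 4$ (odd fourth powers, and $9$ times odd fourth powers, are $1\bmod 4$), and $(-1)^{be/4}=1$ because $be/4$ is an even integer — here one needs $4\mid b$, which holds exactly as in Proposition~\ref{prop:betaindependent}: if $\gamma=a+bi$ had $b\equiv 2\pmod 4$ then $N(\gamma)=a^2+b^2\equiv 5\pmod 8$, forcing a curvature $N(\gamma)-m\equiv 5\pmod 8$ in $\cpack$, impossible since all curvatures in types $(6,1),(6,17)$ lie in $\{0,1,4\}\pmod 8$. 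Finally $m'$ is a fourth power (when $u\in\{1,4\}$) or $9$ times a fourth power (when $u\in\{9,36\}$), and $\quartic{\gamma}{\ell^4}=\quartic{\gamma}{\ell}^{4}=1$ since $\gcd(\gamma,m)=1$. The upshot is $\chi_4(\cpack)=1$ when $u\in\{1,4\}$, and $\chi_4(\cpack)=\quartic{\gamma}{9}$ when $u\in\{9,36\}$.

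It then remains to evaluate $\quartic{\gamma}{9}$, and this is the short conceptual heart. Since $\quartic{\gamma}{3}\equiv\gamma^{(N(3)-1)/4}=\gamma^{2}\pmod 3$, we get $\quartic{\gamma}{9}=\quartic{\gamma}{3}^{2}\equiv\gamma^{4}\pmod 3$. Identifying $\ZZ[i]/3\ZZ[i]$ with $\mathbb{F}_9$, whose Frobenius is $x\mapsto x^{3}$, gives $\overline{\gamma}\equiv\gamma^{3}\pmod 3$, hence $\gamma^{4}\equiv\gamma\overline{\gamma}=N(\gamma)\equiv n\pmod 3$. As $\quartic{\gamma}{9}\in\{1,-1\}$ and $\gcd(n,3)=1$, this forces $\quartic{\gamma}{9}=1$ when $n\equiv 1\pmod 3$ (type $(6,1)$) and $\quartic{\gamma}{9}=-1$ when $n\equiv 2\pmod 3$ (type $(6,17)$). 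Combining the three steps: a curvature $uw^4$ can occur only if $\chi_4(\cpack)$ equals $1$ for $u\in\{1,4\}$ (either type), $1$ for $u\in\{9,36\}$ in type $(6,1)$, and $-1$ for $u\in\{9,36\}$ in type $(6,17)$. Reading this against the four possible values of $\chi_4(\cpack)$ reproduces the table; a one-line check — $w^4\bmod 24\in\{0,1,9,16\}$, so each $S_{4,u}\bmod 24$ meets $R(\cpack)$ in infinitely many residues — confirms these are genuine obstructions in the sense of Definition~\ref{def:obstructiondef}.

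I expect the main obstacle to be the bookkeeping in the middle step: carrying the three parity regimes of $w$ through Definition~\ref{def:chifour} and confirming that every prefactor collapses (in particular that $4\mid b$ in each relevant case). The remaining ingredients — the shared tangency element from Proposition~\ref{prop:betatangent} and the Frobenius evaluation of $\quartic{\gamma}{9}$ — are quick.
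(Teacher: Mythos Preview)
Your argument is correct and follows the same strategy as the paper's: compute $\chi_4$ at the circle of curvature $uw^4$ via Definition~\ref{def:chifour}, show the prefactors collapse so that $\chi_4(\cpack)$ equals $1$ or $\quartic{\gamma}{3}^2$, and then evaluate the latter according to type. Your Frobenius identification $\quartic{\gamma}{3}^2\equiv\gamma^4\equiv\gamma\overline{\gamma}=N(\gamma)\equiv n\pmod 3$ is a tidier replacement for the paper's direct expansion of $(a+bi)^4\pmod 3$; the one small gap is that collapsing $(-1)^{be/4}$ also needs $e$ even (true here since $v_2(uw^4)\in\{0,2\}+4\ZZ_{\geq 0}$), not only $4\mid b$.
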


\begin{proof}
	Let $u\in\{1, 4, 9, 36\}$, and assume that a circle $\cir$ of curvature $n=uw^4$ appears in $\cpack$ for some positive integer $w$. Let $n_2$ be the curvature of a circle $\cir'$ tangent to $\cir$ that is coprime to $n$. Let $\beta$ be chosen as in Proposition~\ref{prop:betatangent} for the circles $\cir$ and $\cir'$.
	
	If $n\equiv 0\pmod{8}$, then $2\mid w$, hence $n=2^en'$ with $n'$ odd and $2\mid e$. Thus
	\[\chi_4(\cir)=(-1)^{be/4}\quartic{\beta}{n'}=\begin{cases}
		\quartic{\beta}{1} & \text{if $u=1, 4$;}\\[7pt]
		\quartic{\beta}{3}^2 & \text{if $u=9, 36$.}\\
	\end{cases}
	\]
	Clearly $\quartic{\beta}{1}=1$, which gives the result for $u=1,4$. For $u=9, 36$, we have $n\equiv 0\pmod{24}$. Let $\beta=a+bi$. Since $3$ is prime in $\ZZ[i]$,
	\[
	\quartic{\beta}{3}^2\equiv\beta^4\equiv (a+bi)^4\equiv a^4+b^4+(a^3b-ab^3)i\pmod{3}.
	\]
	Then $a^2+b^2= n+n_2\equiv n_2\pmod{3}$. If the type of $\cpack$ is $(6, 1)$, then $a^2+b^2\equiv 1\pmod{3}$, so exactly one of $(a, b)$ is $0\pmod{3}$. In either case, $a^4+b^4+(a^3b-ab^3)i\equiv 1\pmod{3}$, so $\chi_4(\cir)=1$. If the type is $(6, 17)$, then $a^2+b^2\equiv 2\pmod{3}$, so $a^2\equiv b^2\equiv 1\pmod{3}$. Thus
	\[a^4+b^4+(a^3b-ab^3)i\equiv \left(a^2\right)^2+\left(b^2\right)^2+ab(a^2-b^2)i\equiv 2\equiv -1\pmod{3},\]
	so $\chi_4(\cir)=-1$, again agreeing with the table.
	
	Next, assume $n\equiv 1\pmod{8}$. If $u=1$, we have $\chi_4(\cir)=\quartic{\beta}{w^4}=1$. Otherwise $u=9$, and in fact $n\equiv 9\pmod{24}$. Then $\chi_4(\cir)=\quartic{\beta}{3^2 w^4}=\quartic{\beta}{3}^2$, from whence the analysis proceeds exactly as for $n\equiv 0 \pmod{8}$.
	
	Finally, take $n\equiv 4\pmod{8}$, which allows $u=4, 36$. If $u=4$, then $n'$ is an odd fourth power, so $\chi_4(\cir)=\kron{-1}{n'}\quartic{\beta}{n'}=1$. If $u=36$, then $n'=9t^4$, so $\chi_4(\cir)=\quartic{\beta}{3}^2$; proceed as for $n\equiv 0 \pmod{8}$.
\end{proof}

There is a nice relationship between $\chi_4(\cpack)$ and $\chi_2(\cpack)$.

\begin{proposition}
	$\chi_4(\cpack)^2=\chi_2(\cpack)$.
\end{proposition}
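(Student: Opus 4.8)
The plan is to prove the identity circle-by-circle: since both $\chi_2$ and $\chi_4$ are constant across a packing $\cpack$ of type $(6,1)$ or $(6,17)$, it suffices to verify $\chi_4(\cir)^2 = \chi_2(\cir)$ for a single circle $\cir$ of curvature $n$, and in fact it is cleanest to prove it for every circle directly from the definitions. Fix $\cir$ of curvature $n$, with associated lattice $\Lambda_\cir$, and pick $\beta = a+bi \in S_{\Lambda_\cir}\cup S_{i\Lambda_\cir}$ as in Definition~\ref{def:chifour}, primary when $n$ is even. The first step is to observe that $N(\beta) = a^2+b^2$ is a properly represented invertible value of $f_\cir$ modulo $n$, so it is a legitimate choice of $\rho(f_\cir)$ in Proposition~\ref{prop:singleton}; hence $\chi_2(\cir)$ may be computed using $\rho = a^2+b^2 = N(\beta) = \beta\overline{\beta}$.

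The second step is the key algebraic bridge: $\left(\quartic{\beta}{m}\right)^2 = \kron{N(\beta)}{m}$ for an odd integer $m$ coprime to $\beta$. This follows because squaring the quartic symbol gives the quadratic residue symbol of $\beta$ modulo $m$ in $\ZZ[i]$, and $\kron{\beta}{m}_{\ZZ[i]}$ for integer $m$ equals $\kron{N(\beta)}{m}$ (using $\overline{\beta} \equiv \beta^{N(\mathfrak p)/\ldots}$ type relations, or more simply: for a rational prime $p \equiv 3 \pmod 4$, $p$ is inert and $\kron{\beta}{p}_{\ZZ[i]} = \kron{N\beta}{p^2}=1 = \kron{N\beta}{p}$ since $N\beta$ is a norm hence a square mod $p$; for $p \equiv 1\pmod 4$, $p = \pi\overline\pi$ and $\kron{\beta}{p}_{\ZZ[i]} = \quartic{\beta}{\pi}^2\quartic{\beta}{\overline\pi}^2 = \kron{\beta}{\pi}\kron{\overline\beta}{\pi}\cdot(\text{conj}) = \kron{N\beta}{p}$; multiplicativity handles general $m$).

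The third step is bookkeeping through the three cases $n \equiv 0, 1, 4 \pmod 8$, matching the extra factors in the definition of $\chi_4$ against those in the definition of $\chi_2$. For $n \equiv 1 \pmod 8$: $\chi_4(\cir)^2 = \quartic{\beta}{n}^2 = \kron{N\beta}{n} = \kron{\rho}{n} = \chi_2(\cir)$. For $n \equiv 4 \pmod 8$ (so $n' = n/4$ odd): $\chi_4(\cir)^2 = \kron{-1}{n'}^2\quartic{\beta}{n'}^2 = \kron{N\beta}{n'}$, and one checks this equals $\kron{-\rho}{n/2}$; here use that $n \equiv 2 \pmod 4$ is \emph{not} the case (we are in $n\equiv 4\pmod 8$, so $n\equiv 0,1\pmod 4$ and Definition~\ref{def:chi} uses $\kron{\rho}{n}$), so one must instead verify $\kron{N\beta}{n'} = \kron{\rho}{n}$, which reduces to evaluating the $2$-part of $\kron{\rho}{n}$ — since all relevant curvatures are $\equiv 0,1,4\pmod 8$, the odd part $\rho$ is $1 \pmod 8$ when $n$ is even (as shown in the proof of Proposition~\ref{prop:betaindependent}), making the $2$-adic Kronecker factor trivial. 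For $n \equiv 0 \pmod 8$ (write $n = 2^e n'$, $e \geq 2$): $\chi_4(\cir)^2 = (-1)^{be/2}\quartic{\beta}{n'}^2 = (-1)^{be/2}\kron{N\beta}{n'}$; since $4 \mid b$ (again from the argument in Proposition~\ref{prop:betaindependent}), $(-1)^{be/2} = 1$, and one finishes as in the $n\equiv 4$ case.

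The main obstacle I expect is the even cases, specifically reconciling the $2$-adic part of the Kronecker symbol $\kron{\rho}{n}$ with the odd-modulus symbol $\kron{N\beta}{n'}$ appearing after squaring $\chi_4$. The resolution is exactly the fact already extracted in the proof of Proposition~\ref{prop:betaindependent}: for packings of type $(6,1)$ or $(6,17)$, whenever $n$ is even one has $b \equiv 0 \pmod 4$ and consequently $\rho = N(\beta) = a^2 + b^2 \equiv 1 \pmod 8$, so $\kron{\rho}{n} = \kron{\rho}{2}^e \kron{\rho}{n'} = 1^e \cdot \kron{\rho}{n'} = \kron{N\beta}{n'}$; the sign corrections $(-1)^{be/4}$ and $\kron{-1}{n'}$ in $\chi_4$ square away to $1$ precisely because of the same congruence on $b$. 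Once this is in hand the three cases are a short computation and the identity follows.
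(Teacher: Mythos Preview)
Your core idea matches the paper's: squaring the quartic symbol gives the quadratic residue symbol over $\ZZ[i]$, and for an odd rational integer $m$ one has $\kron{\beta}{m}_{\ZZ[i]} = \kron{N(\beta)}{m}$ (this is exactly \cite[Proposition~4.2(iii)]{LEM00}, which the paper simply cites). The paper's proof, however, is shorter because it exploits the constancy of $\chi_2$ and $\chi_4$: it picks a single circle of \emph{odd} curvature $n$ (always available in types $(6,1)$ and $(6,17)$), so only your case $n\equiv 1\pmod 8$ is needed and the even bookkeeping never arises.

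There are two concrete errors in your sketch. First, your justification of the bridge at inert primes is wrong: you write $\kron{\beta}{p}_{\ZZ[i]} = \kron{N\beta}{p^2} = 1 = \kron{N\beta}{p}$ ``since $N\beta$ is a norm hence a square mod $p$''. For $p\equiv 3\pmod 4$ the norm map $\mathbb{F}_{p^2}^\times\to\mathbb{F}_p^\times$ is surjective, so norms are not automatically squares; e.g.\ $\beta = 1+i$, $p=3$ gives $N\beta = 2$ with $\kron{2}{3} = -1$ and indeed $\kron{1+i}{3}_{\ZZ[i]} = -1$. The correct argument is that Frobenius on $\ZZ[i]/p$ is complex conjugation, so $\beta^{(p^2-1)/2} = (\beta^{p+1})^{(p-1)/2} = (\beta\overline\beta)^{(p-1)/2} = N(\beta)^{(p-1)/2}$, giving $\kron{\beta}{p}_{\ZZ[i]} = \kron{N\beta}{p}$ with both sides possibly $-1$.

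Second, you assert that ``whenever $n$ is even one has $b\equiv 0\pmod 4$''; Proposition~\ref{prop:betaindependent} proves this only for $n\equiv 0\pmod 8$. When $n\equiv 4\pmod 8$ one can have $b\equiv 2\pmod 4$ and $N(\beta)\equiv 5\pmod 8$. Fortunately this does not matter: in that case $n = 4n'$, so $\kron{\rho}{n} = \kron{\rho}{2}^2\kron{\rho}{n'} = \kron{\rho}{n'}$ for any odd $\rho$, and your identity still goes through. So your even cases are salvageable, but also entirely avoidable by following the paper and choosing $n$ odd.
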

\begin{proof}
	Let $\cir$ be a circle of odd curvature $n$ in $\cpack$.  Choose a circle $\cir'$ tangent to $\cir$ of coprime curvature $n_2$, and choose $\beta$ as in Proposition~\ref{prop:betatangent} for circles $\cir$ and $\cir'$.  Then $\chi_4(\cir)^2=\quartic{\beta}{n}^2$. Let $\kron{\cdot}{\cdot}$ also denote the quadratic residue symbol for $\ZZ[i]$; it follows that $\chi_4(\cir)^2=\kron{\beta}{n}$. By \cite[Proposition 4.2iii)]{LEM00}, we have $\kron{\beta}{n}=\kron{N(\beta)}{n}$, with the second Kronecker symbol taken over $\ZZ$.  Since $N(\beta)=n+n_2$, we can take $\rho(f_{\cir})=n+n_2$, proving that $\chi_4(\cir)^2=\chi_2(\cir)$. As $\chi_2$ and $\chi_4$ are constant across $\cpack$, the result follows.
\end{proof}

\section{Computations}\label{sec:computations}

In order to support Conjecture~\ref{conj:newlocalglobal}, code to compute the missing curvatures was written with a combination of C and PARI/GP \cite{PARI}. This code (alongside other methods to compute with Apollonian circle packings) can be found in the GitHub repository \cite{GHapollonian}.

Files containing the sporadic sets $S_{\cpack}(N):=S_{\cpack}\bigcap[1, N]$ for many small root quadruples can be found in the GitHub repository \cite{GHmissingcurvatures}. In particular, for each of the 14 types listed in Theorem \ref{thm:mainthm}, we took three small root quadruples, and computed the sporadic curvatures up to a bound $N$ in the range $[10^{10}, 10^{12}]$. The bound $N$ was chosen so that the ratio of $N$ to the largest sporadic curvature found exceeds $10$, providing good evidence towards Conjecture \ref{conj:newlocalglobal}. These results are summarized in the appendix, Tables \ref{table:conjecturedata1} and \ref{table:conjecturedata2}.

\begin{remark}
	There are only five pairs of residue class and packing for which it appears that \emph{every single} positive residue in that class appears. They are the following:
	\begin{itemize}
		\item $(-3, 5, 8, 8)$ and $5\pmod{24}$;
		\item $(-3, 4, 12, 13)$ and $13\pmod{24}$;
		\item $(-1, 2, 2, 3)$ and $11, 14, 23\pmod{24}$.
	\end{itemize}
	Near misses are
	\begin{itemize}
		\item $(0, 0, 1, 1)$ and $1\pmod{24}$, which only misses the curvature $241$ up to $10^{10}$;
		\item $(-1, 2, 2, 3)$ and $2\pmod{24}$, which only misses the curvature $13154$ up to $10^{10}$.
	\end{itemize}
\end{remark}

\begin{remark}
    An intrepid observer of the raw sporadic sets may remark that, toward the tail end, the sporadic curvatures are disproportionately multiples of $5$.  In fact, they generally prefer prime divisors which are $1 \pmod{4}$.  We speculate that this is another local phenomenon:  a result of certain symmetries of the distribution of curvatures in the orbit of quadruples modulo $p \equiv 1 \pmod{4}$ (similar to \cite[Figures 3 and 4]{FS11}).
    \end{remark}

\begin{aremark}[not in published version]
 One particularly visually appealing way to observe the reciprocity obstructions is to plot the successive differences of the exceptional set.  Since quadratic and quartic sequences have patternful successive differences, even a union of quadratic and quartic sequences reveals a prominent visual pattern once the sporadic set begins to thin out. See Figure~\ref{fig:differences}, in the appendix.
\end{aremark}

\clearpage

\appendix
\section{Additional tables and figures}

\setcounter{table}{0}
\renewcommand*{\thetable}{\Alph{table}}

\tablestretch{
\begin{table}[htb]
	\caption{(not in published version) $S_{\cpack}(N)$ for small packings: part 1.}\label{table:conjecturedata1}
	\begin{tabular}{|l|c|c|c|c|c|c|c|} 
		\hline
		Packing             & Type             & Quadratic    & Quartic & $N$         & $|S_{\cpack}(N)|$ &  $\max(S_{\cpack}(N))$ & $\approx\frac{N}{\max(S_{\cpack}(N))}$\\ \hline
		$(0, 0, 1, 1)$      & $(6, 1, 1, 1)$   &              &         & $10^{10}$   & 215               & 1199820                & 8334.58\\
		$(-12, 16, 49, 49)$ &                  &              &         & $10^{11}$   & 275276            & 5542869468             & 18.04\\
		$(-20, 36, 49, 49)$ &                  &              &         & $10^{12}$   & 2014815           & 55912619880            & 17.89\\ \hline
		$(-8, 12, 25, 25)$  & $(6, 1, 1, -1)$  &              & $n^4, 4n^4,$ & $10^{10}$ & 47070          & 517280220              & 19.33\\
		$(-12, 25, 25, 28)$ &                  &              & $9n^4, 36n^4$ & $10^{11}$ & 238268        & 5919707820             & 16.89\\
		$(-15, 24, 40, 49)$ &                  &              &         & $2\cdot 10^{11}$ & 639149       & 12692531688            & 15.75\\ \hline
		$(-15, 28, 33, 40)$ & $(6, 1, -1)$     & $n^2, 2n^2,$ &         & $10^{10}$   & 80472             & 820523160              & 12.19\\
		$(-20, 33, 52, 57)$ &                  & $3n^2, 6n^2$ &         & $10^{11}$   & 240230            & 4127189100             & 24.23\\
		$(-23, 40, 57, 60)$ &                  &              &         & $10^{11}$   & 392800            & 8689511520             & 11.51\\ \hline
		$(-4, 5, 20, 21)$   & $(6, 5, 1)$      & $2n^2, 3n^2$ &         & $10^{10}$   & 3659              & 32084460               & 311.68\\
		$(-16, 29, 36, 45)$ &                  &              &         & $10^{10}$   & 80256             & 927211800              & 10.79\\
		$(-19, 36, 44, 45)$ &                  &              &         & $10^{11}$   & 177902            & 3603790320             & 27.75\\ \hline
		$(-3, 5, 8, 8)$     & $(6, 5, -1)$     & $n^2, 6n^2$  &         & $10^{10}$   & 676               & 3122880                & 3202.17\\
		$(-12, 21, 29, 32)$ &                  &              &         & $10^{10}$   & 30347             & 312225420              & 32.03\\
		$(-19, 32, 48, 53)$ &                  &              &         & $2.5\cdot 10^{10}$   & 168264   & 2286209460             & 10.94\\ \hline
		$(-3, 4, 12, 13)$   & $(6, 13, 1)$     & $2n^2, 6n^2$ &         & $10^{10}$   & 731               & 7354464                & 1359.72\\
		$(-12, 21, 28, 37)$ &                  &              &         & $10^{11}$   & 234386            & 3470731680             & 28.81\\
		$(-11, 16, 36, 37)$ &                  &              &         & $10^{10}$   & 20748             & 226988340              & 44.06\\ \hline
		$(-8, 13, 21, 24)$  & $(6, 13, -1)$    & $n^2, 3n^2$  &         & $10^{10}$   & 5273              & 45348900               & 220.51\\
		$(-11, 21, 24, 28)$ &                  &              &         & $10^{10}$   & 21003             & 176441136              & 56.68\\
		$(-20, 37, 45, 52)$ &                  &              &         & $10^{11}$   & 229356            & 4079861484             & 24.51\\ \hline
		$(-16, 32, 33, 41)$ & $(6, 17, 1, 1)$  & $3n^2, 6n^2$ & $9n^4, 36n^4$ & $10^{10}$ & 81777         & 841440840              & 11.88\\
		$(-7, 8, 56, 57)$   &                  &              &         & $10^{10}$   & 55057             & 595231740              & 16.80\\
		$(-16, 20, 81, 81)$ &                  &              &         & $10^{12}$   & 1075024           & 26983035480            & 37.06\\ \hline
		$(-4, 8, 9, 9)$     & $(6, 17, 1, -1)$ & $3n^2, 6n^2$ & $n^4, 4n^4$ & $10^{10}$ & 2057            & 10742460               & 930.89\\
		$(-7, 9, 32, 32)$   &                  &              &         & $10^{10}$   & 34916             & 367956840              & 27.18\\
		$(-15, 32, 32, 33)$ &                  &              &         & $10^{11}$   & 585942       	  & 8505627180             & 11.76\\ \hline
		$(-7, 12, 17, 20)$  & $(6, 17, -1)$    & $n^2, 2n^2$  &         & $10^{10}$   & 3744              & 17141220               & 583.39\\
		$(-12, 17, 41, 44)$ &                  &              &         & $10^{10}$   & 31851             & 270186456              & 37.01\\
		$(-15, 24, 41, 44)$ &                  &              &         & $10^{10}$   & 80106             & 803343900              & 12.45\\ \hline
	\end{tabular}
\end{table}
}

\clearpage

\tablestretch{
\begin{table}[htb]
	\caption{(not in published version) $S_{\cpack}(N)$ for small packings: part 2.}\label{table:conjecturedata2}
	\begin{tabular}{|l|c|c|c|c|c|c|c|} 
		\hline
		Packing             & Type            & Quadratic    & Quartic & $N$         & $|S_{\cpack}(N)|$ &  $\max(S_{\cpack}(N))$ & $\approx\frac{N}{\max(S_{\cpack}(N))}$ \\ \hline
		$(-5, 7, 18, 18)$   & $(8, 7, 1)$     & $3n^2, 6n^2$ &         & $10^{10}$   & 16417             & 86709570               & 115.33\\
		$(-6, 10, 15, 19)$  &                 &              &         & $10^{10}$   & 24305             & 133977255              & 74.64\\
		$(-9, 18, 19, 22)$  &                 &              &         & $10^{10}$   & 14866             & 82815750               & 120.75\\ \hline
		$(-2, 3, 6, 7)$     & $(8, 7, -1)$    & $18n^2$      &         & $10^{10}$   & 236               & 429039                 & 23307.90\\
		$(-5, 6, 30, 31)$   &                 &              &         & $10^{10}$   & 19695             & 97583070               & 102.48\\
		$(-14, 27, 31, 34)$ &                 &              &         & $2\cdot 10^{10}$ & 99294        & 1643827935             & 12.17\\ \hline
		$(-1, 2, 2, 3)$     & $(8, 11, 1)$    &              &         & $10^{10}$   & 61                & 97287                  & 102788.66\\
		$(-9, 14, 26, 27)$  &                 &              &         & $10^{10}$   & 17949             & 85926675               & 116.38\\
		$(-10, 18, 23, 27)$ &                 &              &         & $10^{10}$   & 25944             & 124625694              & 80.24\\ \hline
		$(-6, 11, 14, 15)$  & $(8, 11, -1)$   & $2n^2, 3n^2,$ &        & $10^{10}$   & 3381              & 20149335               & 496.29\\
		$(-10, 14, 35, 39)$ &                 & $6n^2$       &         & $4\cdot10^{10}$ & 256228        & 2934238515             & 13.63\\
		$(-13, 23, 30, 38)$ &                 &              &         & $10^{10}$   & 71341             & 598107510              & 16.72\\ \hline
	\end{tabular}
\end{table}
}

\setcounter{figure}{0}
\renewcommand*{\thefigure}{\Alph{figure}}

\begin{figure}[htb]
	\includegraphics[width=0.6\textwidth]{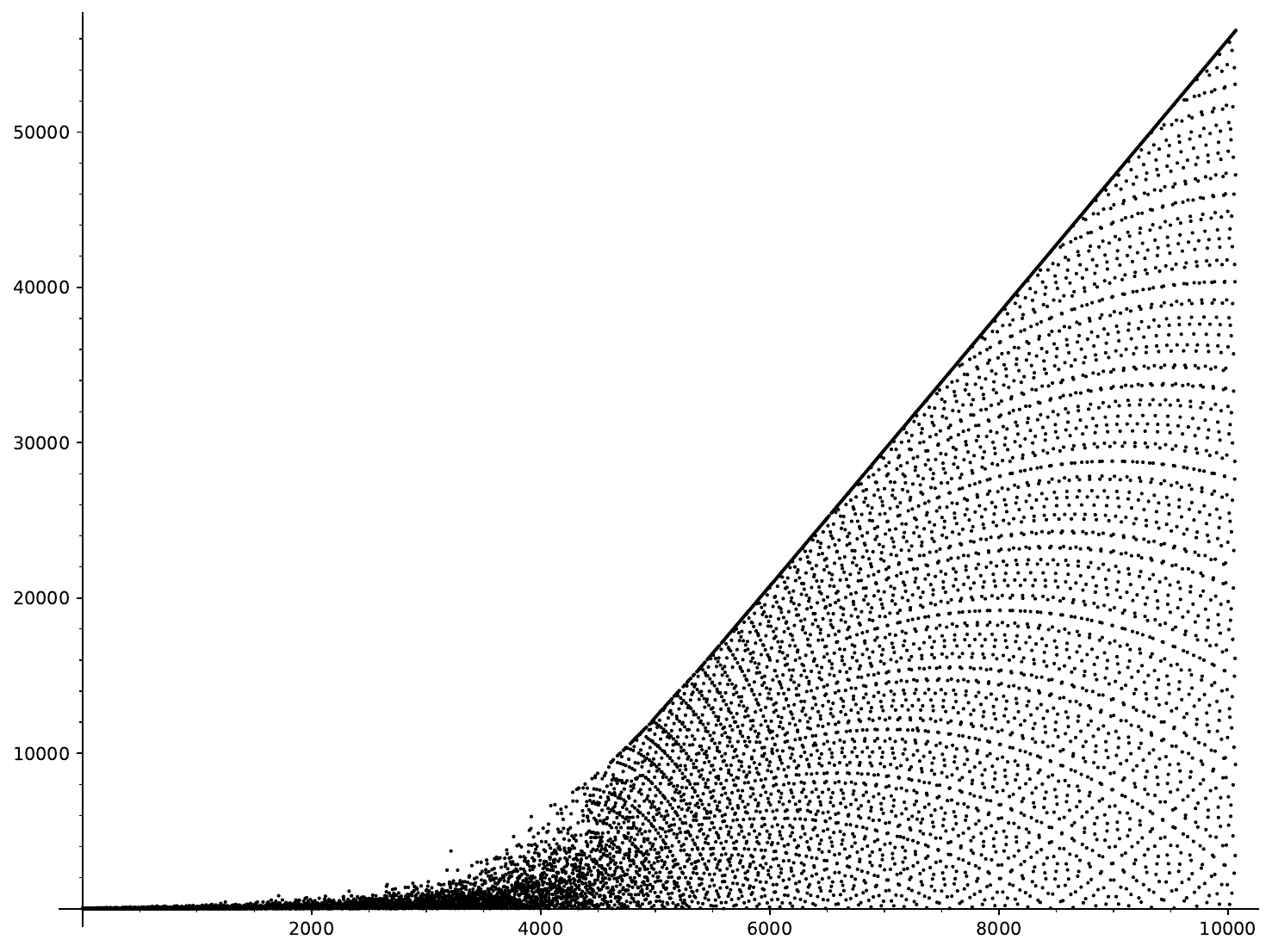}
	\caption{(not in published version) Successive differences of missing curvatures in the packing $(-4,5,20,21)$ of type (6,5,1).  Around the 5000th missing curvature, the quadratic families $2n^2$ and $3n^2$ begin to predominate (the sporadic set has $3659$ elements $< 10^{10}$, and they continue to occur even past the region of this graph, but very sparsely.).}
	\label{fig:differences}
\end{figure}

\clearpage
	
\bibliographystyle{alpha}
\bibliography{references}
\end{document}